\documentclass[11pt,a4paper,reqno]{amsart}

\usepackage[english]{babel}
\usepackage[T1]{fontenc}
\usepackage[utf8]{inputenc}
\usepackage{geometry}
\usepackage{amsmath,amssymb,amsthm,mathrsfs}
\usepackage{enumitem}
\usepackage[hidelinks]{hyperref}

\numberwithin{equation}{section}

\newtheorem{theorem}{Theorem}[section]
\newtheorem{proposition}[theorem]{Proposition}

\newtheorem{corollary}[theorem]{Corollary}
\newtheorem{thmA}{Theorem}

\theoremstyle{definition}
\newtheorem{definition}[theorem]{Definition}
\newtheorem{example}[theorem]{Example}
\newtheorem{remark}[theorem]{Remark}

\newcommand{\kk}{\Bbbk}
\newcommand{\Z}{\mathbb{Z}}
\newcommand{\Ecal}{\mathcal{E}}
\DeclareMathOperator{\Hom}{Hom}
\DeclareMathOperator{\Alt}{Alt}
\DeclareMathOperator{\Bich}{Bich}
\DeclareMathOperator{\SymBich}{SymBich}
\DeclareMathOperator{\AltBich}{AltBich}
\DeclareMathOperator{\Quad}{Quad}
\DeclareMathOperator{\Ind}{Ind}
\DeclareMathOperator{\Aut}{Aut}
\DeclareMathOperator{\Opext}{Opext}
\DeclareMathOperator{\id}{id}

\title[Cocentral Split Abelian Hopf Extensions]
{Cocentral Split Abelian Hopf Algebra Extensions from Crossed Cocycles}
\author{C\'esar Galindo and Giovanny Mora}
\date{\today}
\keywords{Hopf algebra extensions, algebra-split abelian extensions, group cohomology, Coxeter groups}
\subjclass[2020]{16T05, 16T10, 20J06, 20F55}
\hypersetup{%
  pdftitle={Cocentral Split Abelian Hopf Algebra Extensions from Crossed Cocycles},
  pdfauthor={Cesar Galindo and Giovanny Mora},
  pdfsubject={Cocentral algebra-split abelian Hopf algebra extensions and obstruction theory},
  pdfkeywords={Hopf algebra extensions, algebra-split abelian extensions, group cohomology, Coxeter groups}
}

\begin{document}

\begin{abstract}
We study cocentral algebra-split abelian Hopf algebra extensions over an
algebraically closed field of characteristic zero for a fixed action of a
group on a finite abelian group. We describe the extension classes in terms of
crossed families of normalized \(2\)-cocycles and construct the
obstruction to representing crossed families of cohomology classes by
cocycles satisfying the crossed identity. A bicharacter obstruction maps to
the strict lifting obstruction and may remain nonzero even when its image
vanishes. For permutation modules, we obtain an explicit description of the
corresponding cocentral extension groups. We also study finite reductions of
geometric representations of Coxeter groups and compute the first cohomology
of the associated linear coefficient modules for finite dihedral groups and
the infinite dihedral group.
\end{abstract}

\maketitle


\section{Introduction}

\subsection*{Cocentral algebra-split extensions}

Extensions of Hopf algebras provide a standard method for constructing
new Hopf algebras from known ones. When the kernel is commutative and
the quotient is cocommutative, they form the abelian extension theory
developed through the Kac exact sequence and subsequent cohomological
formulations~\cite{Singer1972,Kac1968,Hofstetter1994,
Masuoka2002Extensions,Schauenburg2002Kac,GrunenfelderMastnak2004}.
Once the action and coaction are fixed, the corresponding extension
classes form an \(\Opext\) group~\cite{Schauenburg2002Kac}.

Throughout, \(\kk\) is an algebraically closed field of characteristic
zero. Let \(A\) be a finite abelian group and let \(\Gamma\) act on
\(A\). We study cocentral abelian extensions with kernel \(\kk^A\),
quotient \(\kk\Gamma\), and fixed action. Cocentrality makes the
coaction trivial, while algebra-splitness allows the algebra cocycle to
be chosen trivial. The resulting algebra is
\(\kk^A\rtimes\kk\Gamma\), and its coproduct is determined by a family
\(\tau=(\tau_x)_{x\in\Gamma}\) of normalized group \(2\)-cocycles on
\(A\) satisfying
\[
\tau_{xy}=\tau_x(x\cdot\tau_y).
\]
We call such a family a strict Hopf datum; the term ``strict'' records
that actual cocycle representatives, rather than only their cohomology
classes, satisfy the crossed identity.

Let \(C^1(A,\kk^\times)\) be the group of normalized functions on \(A\),
let
\[
(\partial f)(\alpha,\beta)
=
f(\alpha)f(\beta)f(\alpha+\beta)^{-1},
\qquad
B^2(A,\kk^\times)=\partial C^1(A,\kk^\times),
\]
and apply \(\partial\) pointwise to \(\Gamma\)-cochains. Changes of
homogeneous section give the quotient
\[
\Ecal(A,\Gamma)
=
\frac{Z^1(\Gamma,Z^2(A,\kk^\times))}
     {\partial Z^1(\Gamma,C^1(A,\kk^\times))}.
\]

\begin{thmA}\label{thmA:general-obstruction}
For fixed \(A\), \(\Gamma\), and action, \(\Ecal(A,\Gamma)\) classifies
cocentral algebra-split abelian extensions with kernel \(\kk^A\) and quotient
\(\kk\Gamma\), up to equivalence inducing the identity on both the kernel
and the quotient. Moreover, there is an exact sequence
\[
1
\longrightarrow
\frac{Z^1(\Gamma,B^2(A,\kk^\times))}
     {\partial Z^1(\Gamma,C^1(A,\kk^\times))}
\longrightarrow
\Ecal(A,\Gamma)
\longrightarrow
Z^1(\Gamma,H^2(A,\kk^\times))
\xrightarrow{\delta}
H^2(\Gamma,B^2(A,\kk^\times)).
\]
Thus a cohomological datum \(d\) is represented by a strict Hopf datum if
and only if \(\delta(d)\) is trivial.
\end{thmA}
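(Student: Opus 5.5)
The proof splits into a classification part and a cohomological part.

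\emph{Classification.} I start from the standard description of abelian extensions \(\kk^A\to H\to\kk\Gamma\) with fixed action and coaction. By the Kac/Masuoka/Schauenburg formalism, such an \(H\) is a bicrossed product \(\kk^A{}^{\sigma}\#_{\tau}\kk\Gamma\). Cocentrality forces the coaction of \(\Gamma\) on \(A\) to be trivial. Algebra-splitness lets me normalize \(\sigma=1\), so the algebra is \(\kk^A\rtimes\kk\Gamma\). The dual cocycle \(\tau\colon\Gamma\to(\kk^A\otimes\kk^A)^\times=\operatorname{Map}(A\times A,\kk^\times)\) is then a family \((\tau_x)\).

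Next I translate the Hopf axioms. Coassociativity of \(\Delta(e_\alpha\# x)=\sum_{\beta+\gamma=\alpha}\tau_x(\beta,\gamma)\,e_\beta\# x\otimes e_\gamma\# x\) says exactly that each \(\tau_x\) is a 2-cocycle on \(A\). Counitality gives normalization. Multiplicativity of \(\Delta\) gives \(\tau_{xy}=\tau_x(x\cdot\tau_y)\). So strict Hopf data are precisely \(Z^1(\Gamma,Z^2(A,\kk^\times))\).

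An equivalence that is the identity on kernel and quotient, and also preserves the algebra splitting, is given by \(e_\alpha\#x\mapsto f_x(\alpha)e_\alpha\#x\). For this map to be multiplicative, \(f\) must lie in \(Z^1(\Gamma,C^1(A,\kk^\times))\), and it changes \(\tau\) by \(\partial f\).

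The remaining point is that every equivalence of the underlying extensions can be composed to preserve the splitting. Any equivalence maps the section \(x\mapsto 1\#x\) to another homogeneous multiplicative-up-to-unit section. Because the algebra cocycle is trivial on both sides, the discrepancy is a unit \(u_x\in\kk^A\) satisfying the 1-cocycle condition. This gives the quotient \(\Ecal(A,\Gamma)\).

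\emph{Exact sequence.} Consider the map \(Z^1(\Gamma,Z^2)\to Z^1(\Gamma,H^2)\) sending \(\tau\mapsto([\tau_x])_x\). It is well defined on \(\Ecal\) because \(\partial f\) lies in \(B^2\) pointwise. Its kernel consists of \(\tau\) with values in \(B^2\), which gives the left term. Injectivity on the left is tautological, since both terms are quotients by the same subgroup.

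For \(\delta\), I use the short exact sequence of \(\Gamma\)-modules \(1\to B^2\to Z^2\to H^2\to1\). Given \(d\in Z^1(\Gamma,H^2)\), I choose cocycle lifts \(\tau_x\) and set
\[
c(x,y)=\tau_x\,(x\cdot\tau_y)\,\tau_{xy}^{-1}\in B^2(A,\kk^\times).
\]
A direct check shows \(c\in Z^2(\Gamma,B^2)\), and changing the lifts alters \(c\) by a coboundary. This is the connecting map of the long exact sequence, restricted to cocycles. It is a homomorphism because all modules are abelian.

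Exactness at \(Z^1(\Gamma,H^2)\) is standard. If \(c=d\beta\) with \(\beta\colon\Gamma\to B^2\), then \(\tau\beta^{-1}\) is a strict datum lifting \(d\). Conversely, a strict lift gives \(c=1\). The last sentence of the theorem is a restatement of this exactness.

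The main obstacle is the classification step, specifically the reduction of arbitrary equivalences to gauge transformations by \(Z^1(\Gamma,C^1)\). I would handle it by writing a general equivalence as \(e_\alpha\#x\mapsto\sum\phi\) and using the facts that it is the identity on \(\kk^A\) and compatible with the projection to \(\kk\Gamma\). These force \(\phi(e_\alpha\#x)=f_x(\alpha)e_\alpha\#x\). Algebra-multiplicativity then gives the cocycle condition for \(f\), and coalgebra compatibility gives \(\tau'=\tau\,\partial f\).
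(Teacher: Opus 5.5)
Your proposal is correct and follows essentially the paper's route. The equivalence analysis (identity on \(\kk^A\) plus compatibility with \(\pi\) forces \(e_\lambda u_x\mapsto f_x(\lambda)e_\lambda u'_x\), multiplicativity makes \(f\) crossed, and the coproduct gives \(\tau'_x=\tau_x\,\partial f_x\)) is Proposition~\ref{prop:equivalence-sections}. Your connecting-map argument is exactly Theorem~\ref{thm:general-obstruction} together with Corollary~\ref{cor:general-obstruction-exact}.

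The one difference is in the first step. You import the form of the coproduct from the general bicrossed-product theory. The paper instead derives \(\Delta(u_x)=J_x(u_x\otimes u_x)\) directly from cocentrality, via \(\Delta(H_x)\subseteq H_x\otimes H_x\). The paper also checks by hand the converse that you leave implicit: a strict datum yields a cocentral, exact, algebra-split extension with antipode~\eqref{eq:antipode}.
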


Propositions~\ref{prop:tau-conditions}
and~\ref{prop:equivalence-sections} give the classification in
Theorem~\ref{thmA:general-obstruction}. Theorem~\ref{thm:general-obstruction}
and Corollary~\ref{cor:general-obstruction-exact} give its obstruction
and exact sequence.

\subsection*{Bicharacter and strict obstructions}

For finite abelian \(A\), the commutator map gives a natural
isomorphism \(H^2(A,\kk^\times)\cong\AltBich(A)\), where
\(\AltBich(A)\) denotes the group of alternating bicharacters on
\(A\)~\cite[Proposition~2.6]{Tambara2000}. If
\(\Bich(A)\) and \(\SymBich(A)\) denote the groups of bicharacters and
symmetric bicharacters, respectively, then
\[
1
\longrightarrow
\SymBich(A)
\longrightarrow
\Bich(A)
\xrightarrow{\Alt}
\AltBich(A)
\longrightarrow
1.
\]
Here
\[
\Alt(b)(\alpha,\beta)
=
\frac{b(\alpha,\beta)}{b(\beta,\alpha)}.
\]
We write
\[
\vartheta:\AltBich(A)\xrightarrow{\sim}H^2(A,\kk^\times)
\]
for the \(\Gamma\)-equivariant isomorphism characterized by
\(\vartheta(\Alt(b))=[b]\).

For \(d\in Z^1(\Gamma,\AltBich(A))\), choose bicharacters \(b_x\) with
\(b_1=1\) and \(\Alt(b_x)=d_x\). Their defect
\[
\Omega^{\operatorname{bich}}_d(x,y)
=
b_x(x\cdot b_y)b_{xy}^{-1}
\]
takes values in \(\SymBich(A)\).

\begin{thmA}\label{thmA:bicharacter-obstruction}
The cochain \(\Omega^{\operatorname{bich}}_d\) takes values in
\(\SymBich(A)\), and its cohomology class in
\(H^2(\Gamma,\SymBich(A))\) is independent of the chosen lifts \(b_x\).
The datum \(d\) lifts to a
bicharacter-normalized Hopf datum if and only if this class vanishes.
If \(A\) has odd exponent, the class vanishes for every \(d\).
\end{thmA}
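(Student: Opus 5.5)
The plan is to realize \(\Omega^{\operatorname{bich}}_d\) as the coboundary of a \(1\)-cochain with values in the abelian \(\Gamma\)-module \(\Bich(A)\), and then to read every assertion of Theorem~\ref{thmA:bicharacter-obstruction} from standard cochain manipulations. By a \emph{bicharacter-normalized Hopf datum} lifting \(d\) I mean a strict Hopf datum \(\tau\) with each \(\tau_x\in\Bich(A)\) and \(\Alt(\tau_x)=d_x\), equivalently \(\vartheta(d_x)=[\tau_x]\). This makes sense because bicharacters are normalized \(2\)-cocycles on \(A\).

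\emph{Values and cocycle property.} Since \(\Alt\) is a \(\Gamma\)-equivariant homomorphism and \(d\) is a \(1\)-cocycle,
\[
\Alt\bigl(\Omega^{\operatorname{bich}}_d(x,y)\bigr)=d_x\,(x\cdot d_y)\,d_{xy}^{-1}=1 .
\]
Hence \(\Omega^{\operatorname{bich}}_d(x,y)\in\ker\Alt=\SymBich(A)\). Moreover \(\Omega^{\operatorname{bich}}_d=\delta b\) as a \(2\)-cochain in \(C^2(\Gamma,\Bich(A))\), where \(b=(b_x)\). Therefore \(\delta\Omega^{\operatorname{bich}}_d=\delta\delta b=1\), and since \(\SymBich(A)\subset\Bich(A)\) is a submodule, \(\Omega^{\operatorname{bich}}_d\) is a normalized \(2\)-cocycle with values in \(\SymBich(A)\).

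\emph{Independence of the lift.} Any other admissible lift has the form \(b'_x=b_xs_x\) with \(s_x\in\SymBich(A)\) and \(s_1=1\). Then
\[
\Omega'=\Omega^{\operatorname{bich}}_d\cdot\delta s,
\]
so the class in \(H^2(\Gamma,\SymBich(A))\) is unchanged.

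\emph{Lifting criterion.} If a bicharacter-normalized datum \(\tau\) lifts \(d\), take \(b=\tau\); the crossed identity says exactly \(\Omega^{\operatorname{bich}}_d=1\), so the class vanishes. Conversely, suppose \(\Omega^{\operatorname{bich}}_d=\delta s\) with \(s\) a \(\SymBich(A)\)-valued cochain. Evaluating at \((1,1)\) and using \(b_1=1\) gives \(s_1=1\). Now put \(\tau_x=b_xs_x^{-1}\). Then \(\Alt(\tau_x)=d_x\) and \(\delta\tau=1\), which is the crossed identity \(\tau_{xy}=\tau_x(x\cdot\tau_y)\). Since bicharacters are normalized \(2\)-cocycles, \(\tau\) is a strict Hopf datum lifting \(d\).

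\emph{Odd exponent.} This is the step requiring care. When \(\Gamma\) is infinite, \(H^2(\Gamma,\SymBich(A))\) need not vanish merely because \(\SymBich(A)\) has odd exponent, so an abstract vanishing argument is not available. Instead I will construct a \(\Gamma\)-equivariant splitting of \(\Alt\). Let \(m\) be the (odd) exponent of \(A\), and define
\[
\sigma(c)=c^{(m+1)/2}\quad\text{for } c\in\AltBich(A).
\]
Because \(c\) is alternating, \(c(\beta,\alpha)=c(\alpha,\beta)^{-1}\). Hence \(\Alt(\sigma(c))=c^{m+1}=c\). The map \(\sigma\) is a homomorphism and commutes with the \(\Gamma\)-action. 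Choosing \(b_x=\sigma(d_x)\) therefore gives \(b_1=1\) and
\[
\Omega^{\operatorname{bich}}_d=\sigma\bigl(d_x(x\cdot d_y)d_{xy}^{-1}\bigr)=1 .
\]
Thus the class vanishes for every \(d\).
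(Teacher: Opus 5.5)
Your proposal is correct and follows essentially the same route as the paper. The paper also treats \(\Omega^{\operatorname{bich}}_d\) as the defect of the lifts (your \(\delta b\)), changes the lifts by a \(\SymBich(A)\)-valued cochain to get both independence and the lifting criterion, and in the odd-exponent case uses the \(\Gamma\)-equivariant section \(a\mapsto a^h\) with \(h=2^{-1}\in\Z/n\Z\), which is exactly your exponent \((m+1)/2\). Your side remark about infinite \(\Gamma\) is harmless, but \(H^2(\Gamma,\SymBich(A))\) can be nonzero for finite \(\Gamma\) too, for instance \(\Gamma=\Z/3\Z\) acting trivially on \(A=\Z/3\Z\), so the explicit splitting is needed in general.
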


We denote this class by
\[
\operatorname{ob}_{\operatorname{bich}}(d)
=
[\Omega^{\operatorname{bich}}_d].
\]
Every bicharacter lift of \(d\) is a strict lift of \(\vartheta_*d\),
but a strict lift need not arise from bicharacters. Let
\(\iota:\SymBich(A)\to B^2(A,\kk^\times)\) be the natural inclusion.
We write \(H^2(\Gamma,\iota)\) for the induced map from
\(H^2(\Gamma,\SymBich(A))\) to
\(H^2(\Gamma,B^2(A,\kk^\times))\).
Equation~\eqref{eq:bich-strict-comparison} identifies the strict
obstruction with the image of the bicharacter obstruction.
\[
H^2(\Gamma,\iota)
\bigl(\operatorname{ob}_{\operatorname{bich}}(d)\bigr)
=
\delta(\vartheta_*d).
\]
Thus the possible loss of information is measured by
\(\ker H^2(\Gamma,\iota)\). Let
\[
Q(A)=B^2(A,\kk^\times)/\SymBich(A).
\]
The coefficient sequence
\(1\to\SymBich(A)\xrightarrow{\iota}B^2(A,\kk^\times)
\to Q(A)\to1\) has connecting homomorphism
\(\partial_Q:H^1(\Gamma,Q(A))\to
H^2(\Gamma,\SymBich(A))\). Exactness gives
\[
\ker H^2(\Gamma,\iota)
=
\operatorname{im}(\partial_Q).
\]
Hence \(\operatorname{ob}_{\operatorname{bich}}(d)\) may be nonzero even
when \(\delta(\vartheta_*d)=0\).
Theorem~\ref{thm:bich-obstruction} establishes the bicharacter lifting
criterion, while Corollary~\ref{cor:odd-exponent} gives an explicit
canonical bicharacter lift when \(A\) has odd exponent.

\subsection*{Permutation modules and generalized Kac--Paljutkin algebras}

Let \(R=\Z/n\Z\), with \(n\geq2\), let \(X\) be a finite
\(\Gamma\)-set, and put \(A=R[X]\). After fixing a primitive
\(n\)-th root \(q\in\kk^\times\), bicharacters are represented by
matrices over \(R\). Proposition~\ref{prop:perm-decomposition} gives
\[
\SymBich(A)
\cong
R[X]\oplus R\!\left[\binom{X}{2}\right],
\qquad
\AltBich(A)
\cong
\bigoplus_{P\in\Gamma\backslash\binom{X}{2}}
\Ind_{\Gamma_P}^{\Gamma}R_{\varepsilon_P}.
\]
Here \(\Gamma_P\) is the setwise stabilizer of \(P\), and
\(R_{\varepsilon_P}\) is its orientation module on the two-point set
\(P\).
Consequently, Corollary~\ref{cor:perm-shapiro} computes the target
\(H^2(\Gamma,\SymBich(A))\) of the bicharacter obstruction and the
quotient \(H^1(\Gamma,\AltBich(A))\) through point and pair stabilizers.
The connecting maps associated with the orientation modules determine
the bicharacter obstruction by Remark~\ref{rem:perm-bich-obstruction};
the crossed cohomological data themselves lie in
\(Z^1(\Gamma,\AltBich(A))\).

Let \(\Opext_\Gamma(A)\) denote the group of equivalence classes of all
cocentral abelian extensions with the fixed action and trivial
coaction; unlike \(\Ecal(A,\Gamma)\), no algebra-split condition is
imposed. Put \(A^\vee=\Hom(A,\kk^\times)\). Each extension class
determines a crossed family
\(x\mapsto[\tau_x]\in H^2(A,\kk^\times)\); denote this assignment by
\(\operatorname{cl}\). Applying \(\vartheta^{-1}\) pointwise identifies
such families with elements of \(Z^1(\Gamma,\AltBich(A))\).

For permutation modules, the quadratic correction constructed in
Theorem~\ref{thm:permutation-kac-vanishing} produces a homomorphic
section and hence the split exact sequence
\[
0
\longrightarrow
H^2(\Gamma,A^\vee)
\longrightarrow
\Opext_\Gamma(A)
\xrightarrow{\vartheta_*^{-1}\circ\operatorname{cl}}
Z^1(\Gamma,\AltBich(A))
\longrightarrow0.
\]
Surjectivity of \(\vartheta_*^{-1}\circ\operatorname{cl}\) says that
every \(d\in Z^1(\Gamma,\AltBich(A))\) occurs as the cohomological datum
of an extension class in \(\Opext_\Gamma(A)\). By
Theorem~\ref{thm:general-obstruction}, \(d\) has an algebra-split
realization precisely when its strict obstruction
\(\delta(\vartheta_*d)\) vanishes.

For the natural action of \(\mathfrak S_m\) on \(A=R^m\), and for
\(i\neq j\), let \(E_{ij}\in M_m(R)\) have \(1\) in position
\((i,j)\), \(-1\) in position \((j,i)\), and zeros elsewhere. Define
\(d_{\operatorname{inv}}\) by
\[
d_{\operatorname{inv}}(s_i)=E_{i,i+1},
\qquad
d_{\operatorname{inv}}(w)
=
\sum_{\substack{a<b\\w(a)>w(b)}}E_{w(b),w(a)}.
\]
Proposition~\ref{prop:symmetric-alt-cohomology} computes
\[
H^1(\mathfrak S_m,\AltBich(R^m))
\cong
\begin{cases}
R/2R,&m=2,3,\\
R/2R\oplus R[2],&m\geq4,
\end{cases}
\]
where \(R[2]=\{r\in R\mid 2r=0\}\). The class of
\(d_{\operatorname{inv}}\) has \(R/2R\)-coordinate \(1\), and its
\(R[2]\)-coordinate is
zero when \(m\geq4\). For every \(m,n\geq2\),
Proposition~\ref{prop:inversion-strict-lift} gives a strict lift of
\(\vartheta_*d_{\operatorname{inv}}\), whereas
Proposition~\ref{prop:inversion-lift} shows that a bicharacter lift
exists precisely when \(n\) is odd.

In particular, \eqref{eq:symmetric-full-opext-explicit} gives a
noncanonical description of the full group.
\[
\Opext_{\mathfrak S_m}(R^m)
\cong
\begin{cases}
R^{\binom{m}{2}},&n\text{ odd},\\
R,&n\text{ even and }m=2,\\
R^3\oplus\Z/2\Z,&n\text{ even and }m=3,\\
R^6\oplus(\Z/2\Z)^2,&n\text{ even and }m=4,\\
R^{\binom{m}{2}}\oplus(\Z/2\Z)^3,&n\text{ even and }m\geq5.
\end{cases}
\]

Under the identification
\(\kk[\Z/n\Z]^{\otimes m}\cong\kk^{R^m}\) determined by \(q\), the
adjacent twists in the generalized Kac--Paljutkin algebra \(H_{n,m}\) constructed
in~\cite{Lomp2025} become the bicharacters
\[
c_i(a,b)=q^{a_i b_{i+1}},
\qquad
\Alt(c_i)=E_{i,i+1}.
\]
An extension class in \(\Opext_\Gamma(A)\) can be represented by a pair
\((\sigma,\tau)\), where
\(\sigma\in Z^2(\Gamma,C^1(A,\kk^\times))\) controls multiplication.
We denote the cohomology class
\([\sigma]\in H^2(\Gamma,C^1(A,\kk^\times))\) by
\(\operatorname{mult}([(\sigma,\tau)])\). By
Proposition~\ref{prop:full-kac-comparison}, this class vanishes
precisely when the extension is algebra-split.
Proposition~\ref{prop:Hnm-extension-class} identifies \([H_{n,m}]\)
inside the full extension group through the equalities
\[
\operatorname{mult}([H_{n,m}])=1,
\qquad
\operatorname{cl}([H_{n,m}])
=
\vartheta_*d_{\operatorname{inv}}.
\]
The equality \(\operatorname{mult}([H_{n,m}])=1\) records that the
multiplication component of \([H_{n,m}]\) is a coboundary.
Under the splitting of Theorem~\ref{thm:permutation-kac-vanishing}, its
class is \((0,d_{\operatorname{inv}})\). The cocycle
\(d_{\operatorname{inv}}\) has order \(n\), although its
\(H^1\)-class is trivial for odd \(n\) and has order \(2\) for even
\(n\). Thus, when \(n\) is even, \(H_{n,m}\) is algebra-split but admits
no bicharacter-normalized algebra-split representative under
equivalences inducing the identity on the kernel and the quotient.

\subsection*{Finite reductions of Coxeter representations}

Section~\ref{sec:arithmetic-coxeter} applies the obstruction theory to
finite reductions \(L_{\mathfrak a}\) of the geometric representation
of a finite-rank Coxeter system \((W,S)\) over the residue rings
\(R_{\mathfrak a}\).
Proposition~\ref{prop:coxeter-crossed} converts crossed homomorphisms
into the relations imposed on their values at the simple reflections.
Restricting to the associated linear coefficient module turns these relations
into a finite system of linear equations. A generating additive character
embeds this module into \(\AltBich(L_{\mathfrak a})\).

For finite Coxeter groups, Proposition~\ref{prop:finite-averaging}
shows that the linearized first cohomology vanishes when \(|W|\) is
invertible in the residue ring. Theorem~\ref{thm:dihedral-cohomology}
gives
\[
H^1(I_2(m),R_{\mathfrak a}^{\operatorname{sgn}})
\cong
R_{\mathfrak a}/2R_{\mathfrak a}\oplus R_{\mathfrak a}[m].
\]
for finite dihedral groups over \(R_{\mathfrak a}\), where
\(R_{\mathfrak a}[m]=\{r\in R_{\mathfrak a}\mid mr=0\}\).
For \(R=\Z/n\Z\), Proposition~\ref{prop:affine-a1} identifies the
corresponding first cohomology for the infinite dihedral group with
\(R\oplus R/2R\). For any bicharacter-normalized datum for this group,
Proposition~\ref{prop:affine-factorization} produces a finite quotient
and hence a finite-dimensional Hopf algebra.

\subsection*{Organization of the paper}
Section~\ref{sec:extensions} develops the cocentral algebra-split
extension formulas; Section~\ref{sec:obstructions} proves the obstruction
results; Section~\ref{sec:permutation} treats permutation modules, the
full extension group, and the generalized Kac--Paljutkin family; and
Section~\ref{sec:arithmetic-coxeter} treats finite reductions of Coxeter
representations.

\section{Cocentral algebra-split abelian extensions}\label{sec:extensions}

Throughout the paper, \(\kk\) is an algebraically closed field of
characteristic zero. Let \(A\) be a finite abelian group, written
additively, and let \(\Gamma\) act on \(A\) by automorphisms. All
actions are left actions.

\subsection{From abelian extensions to the algebra-split case}\label{sec:opext-background}

We recall only the part of abelian extension theory needed for our
construction.
Let
\[
\kk\longrightarrow K\xrightarrow{i} H\xrightarrow{\pi} Q
\longrightarrow \kk
\]
be an exact sequence of Hopf algebras in the usual sense of Hopf
algebra extension theory~\cite{AndruskiewitschDevoto1995,Masuoka2002Extensions}.
In the cleft-cocleft setting,
after choosing cleaving maps, the extension is described on the vector
space \(K\otimes Q\) by four pieces of data
\((\cdot,\sigma,\rho,\tau)\).
The symbols \(\cdot\) and \(\rho\) denote a weak action and a weak
coaction, while \(\sigma\) and \(\tau\) are respectively an algebra
cocycle and a coalgebra cocycle. The corresponding Hopf algebra is a bicrossed
product \(K^\tau\#_\sigma Q\); the cocycle \(\sigma\) twists the
multiplication and \(\tau\) twists the
comultiplication~\cite[Theorem~2.20 and Theorem~3.2.14]{AndruskiewitschDevoto1995}.

When \(K\) is commutative and \(Q\) is cocommutative one obtains the
abelian Hopf algebra extensions. If the weak action \(\cdot\) and weak
coaction \(\rho\) are fixed, the remaining extension classes form the
abelian group \(\Opext(Q,K;\cdot,\rho)\)~\cite{Schauenburg2002Kac}.
The general extension theory serves only as background.

For cocentral algebra-split extensions, our arguments use only the
explicit cocycles determined by the fixed action and coaction. We take
\(K=\kk^A\) and \(Q=\kk\Gamma\), where \(A\) is a finite abelian group
and \(\Gamma\) acts on \(A\). The action gives the action part.
Cocentrality forces the coaction part to be trivial,
\(\rho_{\mathrm{triv}}(x)=x\otimes 1\) for \(x\in\Gamma\). For finite
\(\Gamma\), this agrees with the group-algebra criterion in
\cite[Lemma~3.3]{Natale2010TY}. Algebra-splitness means that the algebra
cocycle can be taken to be trivial, \(\sigma=1\).
Thus the only remaining extension datum is the coalgebra cocycle
\(\tau\).

Since \(\kk^A\otimes\kk^A\cong \kk^{A\times A}\), the cocycle \(\tau\)
is a family of functions \(\tau_x\) from
\(A\times A\) to \(\kk^\times\), indexed by \(x\in\Gamma\).
The compatibility identities for \((\sigma,\tau)\), after imposing
\(\sigma=1\) and trivial \(\rho\), reduce exactly to the conditions that
each \(\tau_x\) is a normalized group \(2\)-cocycle on \(A\) and that
\[
\tau_{xy}=\tau_x(x\cdot\tau_y).
\]
Thus the extensions considered in this paper form the cocentral
algebra-split subfamily of the usual abelian-extension group
\(\Opext(\kk\Gamma,\kk^A;\cdot,\rho_{\mathrm{triv}})\), namely the classes
represented by data with trivial coaction and trivial
algebra cocycle.
Proposition~\ref{prop:tau-conditions} constructs the corresponding Hopf
algebras from the functions \(\tau_x\),
Proposition~\ref{prop:equivalence-sections} describes the effect of
changing the section, and
Theorem~\ref{thm:general-obstruction} gives the obstruction to lifting
a crossed homomorphism \(\Gamma\to H^2(A,\kk^\times)\) to such strict
cocycle data.

\subsection{Strict Hopf data and Hopf algebra formulas}

Proposition~\ref{prop:tau-conditions} constructs these Hopf algebras
explicitly as cocentral algebra-split abelian extensions, which appear
naturally in the theory of abelian extensions and matched pairs;
see~\cite{Kac1968,Takeuchi1981,Montgomery1993}.

Let \(\kk^A\) be the function algebra on \(A\). We write
\(\{e_\lambda\}_{\lambda\in A}\) for the standard basis of orthogonal idempotents.
\[
e_\lambda e_\mu=\delta_{\lambda,\mu}e_\lambda,
\qquad
1_{\kk^A}=\sum_{\lambda\in A}e_\lambda.
\]
The Hopf algebra structure on \(\kk^A\) is
\[
\Delta(e_\lambda)=\sum_{\alpha+\beta=\lambda}e_\alpha\otimes e_\beta,
\qquad
\varepsilon(e_\lambda)=\delta_{\lambda,0}.
\]

\begin{definition}
A Hopf algebra map \(\pi\) from \(H\) to \(Q\) is called
\emph{cocentral} if
\[
\pi(h_{(1)})\otimes h_{(2)}
=
\pi(h_{(2)})\otimes h_{(1)}
\]
for all \(h\in H\). An extension
\[
\kk\longrightarrow \kk^A\longrightarrow H
\xrightarrow{\pi}
\kk\Gamma\longrightarrow \kk
\]
is called a \emph{cocentral algebra-split abelian extension} if \(\pi\) is
cocentral, \(\kk^A=H^{\operatorname{co}\kk\Gamma}\), and the extension
is split in the algebraic sense. In the present setting, this means that
\(\pi\) admits a unital algebra map \(\gamma:\kk\Gamma\longrightarrow H\)
which is a right \(\kk\Gamma\)-comodule section. Equivalently, after
choosing such a section, the algebra cocycle is trivial and
\(H\simeq \kk^A\rtimes\kk\Gamma\) as an algebra.
\end{definition}

Let \(\gamma\) be such a section from \(\kk\Gamma\) to \(H\), and write
\(u_x=\gamma(x)\) for \(x\in\Gamma\).
Then
\[
u_1=1,
\qquad
u_x u_y=u_{xy},
\qquad
\pi(u_x)=x,
\]
and \((\id\otimes\pi)\Delta(u_x)=u_x\otimes x\).
We assume that the section implements the given action of \(\Gamma\) on \(A\), so that
\(u_x e_\lambda u_x^{-1}=e_{x\cdot\lambda}\). Thus, as an algebra,
\(H\simeq \kk^A\rtimes \kk\Gamma\),
with multiplication
\begin{equation}\label{eq:smash-product}
(e_\lambda u_x)(e_\mu u_y)
=
\delta_{\lambda,x\cdot\mu}\,e_\lambda u_{xy}.
\end{equation}

Since the algebra structure is already determined by the action of
\(\Gamma\) on \(A\), the remaining Hopf-theoretic information is
encoded in the coproduct of the elements \(u_x\).

For \(x\in\Gamma\), put
\[
H_x
=
\{h\in H\mid(\id\otimes\pi)\Delta(h)=h\otimes x\}.
\]
The smash-product decomposition gives
\(H=\bigoplus_{x\in\Gamma}H_x\) and \(H_x=\kk^A u_x\).
Coassociativity gives \(\Delta(H_x)\subseteq H\otimes H_x\).
Cocentrality identifies the corresponding left degree with \(x\), and
coassociativity then gives \(\Delta(H_x)\subseteq H_x\otimes H\). Therefore
\[
\Delta(H_x)
\subseteq
(H\otimes H_x)\cap(H_x\otimes H)
=
H_x\otimes H_x.
\]
Hence there is a unique
\(J_x\in\kk^A\otimes\kk^A\) such that
\[
\Delta(u_x)=J_x(u_x\otimes u_x).
\]
Since \(\Delta(u_x)\) and \(u_x\otimes u_x\) are invertible, a standard
finite-dimensional argument gives
\(J_x\in(\kk^A\otimes\kk^A)^\times\).
Write
\[
J_x
=
\sum_{\alpha,\beta\in A}
\tau_x(\alpha,\beta)\,
e_\alpha\otimes e_\beta.
\]
Then
\begin{equation}\label{eq:coproduct-tau}
\Delta(e_\lambda u_x)
=
\sum_{\alpha+\beta=\lambda}
\tau_x(\alpha,\beta)
(e_\alpha u_x)\otimes(e_\beta u_x).
\end{equation}

\begin{definition}\label{def:strict-hopf-datum}
We write \(Z^2(A,\kk^\times)\) for the group of normalized group
\(2\)-cocycles on \(A\). Let \(\Gamma\) act on
\(Z^2(A,\kk^\times)\) by
\[
(x\cdot\omega)(\alpha,\beta)
=
\omega(x^{-1}\cdot\alpha,x^{-1}\cdot\beta).
\]
A \emph{strict Hopf datum} is a crossed homomorphism
\(\tau\in Z^1(\Gamma,Z^2(A,\kk^\times))\).
Explicitly, it is a family of normalized \(2\)-cocycles
\(\tau_x\) on \(A\) satisfying
\begin{equation}\label{eq:crossed-tau}
\tau_{xy}(\alpha,\beta)
=
\tau_x(\alpha,\beta)
\tau_y(x^{-1}\cdot\alpha,x^{-1}\cdot\beta).
\end{equation}
\end{definition}

\begin{proposition}\label{prop:tau-conditions}
The formulas~\eqref{eq:smash-product} and~\eqref{eq:coproduct-tau}
define a cocentral algebra-split abelian Hopf algebra extension if and only if
\(\tau\) is a strict Hopf datum. In that case
\(\varepsilon(e_\lambda u_x)=\delta_{\lambda,0}\), and
\begin{equation}\label{eq:antipode}
S(e_\lambda u_x)
=
\tau_x(\lambda,-\lambda)^{-1}
e_{-x^{-1}\cdot\lambda}u_{x^{-1}}.
\end{equation}
\end{proposition}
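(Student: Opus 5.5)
We verify the Hopf algebra axioms directly on the basis \(\{e_\lambda u_x\}\) and translate each one into a condition on \(\tau\). The algebra \(\kk^A\rtimes\kk\Gamma\) given by \eqref{eq:smash-product} is already associative and unital, so only three things remain: \(\Delta\) must be an algebra map, \(\Delta\) must be coassociative with counit \(\varepsilon(e_\lambda u_x)=\delta_{\lambda,0}\), and an antipode must exist.

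We first treat multiplicativity. Since \(\Delta\) on \(\kk^A\) is the usual multiplicative coproduct, and \(H\) is generated by \(\kk^A\) and the \(u_x\), it suffices to check two relations. The first is \(\Delta(u_x)\Delta(e_\mu)=\Delta(e_{x\cdot\mu})\Delta(u_x)\). Because \(J_x\) lies in the commutative algebra \(\kk^A\otimes\kk^A\), this holds automatically, using that \(x\) acts on \(A\) by group automorphisms. The second is \(\Delta(u_x)\Delta(u_y)=\Delta(u_{xy})\). Conjugating \(J_y\) past \(u_x\otimes u_x\), the left side becomes \(J_x\,(x\cdot J_y)\,(u_{xy}\otimes u_{xy})\), where \((x\cdot J_y)\) has coefficients \(\tau_y(x^{-1}\cdot\alpha,x^{-1}\cdot\beta)\) on \(e_\alpha\otimes e_\beta\). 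Comparing coefficients gives exactly the crossed identity \eqref{eq:crossed-tau}. Also \(u_1=1\) forces \(\tau_1=1\), which is consistent with the crossed identity.

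Next comes coassociativity. Applying \((\Delta\otimes\id)\) and \((\id\otimes\Delta)\) to \eqref{eq:coproduct-tau} gives the coefficients \(\tau_x(\alpha,\beta)\tau_x(\alpha+\beta,\gamma)\) and \(\tau_x(\beta,\gamma)\tau_x(\alpha,\beta+\gamma)\) on \(e_\alpha u_x\otimes e_\beta u_x\otimes e_\gamma u_x\). Their equality is the \(2\)-cocycle condition for \(\tau_x\). For the counit, \((\varepsilon\otimes\id)\Delta(e_\lambda u_x)=\tau_x(0,\lambda)e_\lambda u_x\), so the counit axiom is equivalent to normalization, \(\tau_x(0,\lambda)=\tau_x(\lambda,0)=1\). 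We must also check that \(\varepsilon\) is an algebra map: \(\varepsilon(e_\lambda u_x\, e_\mu u_y)=\delta_{\lambda,x\cdot\mu}\delta_{\lambda,0}\), which equals \(\delta_{\lambda,0}\delta_{\mu,0}\). Together these show that \(\tau\) is a strict Hopf datum if and only if \((H,\Delta,\varepsilon)\) is a bialgebra. For the reverse implication (a structure forces a strict datum), the same coefficient comparisons run backwards, since the \(e_\alpha u_x\otimes e_\beta u_x\) are linearly independent.

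For the antipode, we verify \(m(S\otimes\id)\Delta=\eta\varepsilon\) on \(e_\lambda u_x\) using the proposed formula \eqref{eq:antipode}. The sum is
\[
\sum_{\alpha+\beta=\lambda}\tau_x(\alpha,\beta)\tau_x(\alpha,-\alpha)^{-1}e_{-x^{-1}\cdot\alpha}u_{x^{-1}}e_\beta u_x .
\]
Since \(u_{x^{-1}}e_\beta u_x=e_{x^{-1}\cdot\beta}\), the product forces \(\beta=-\alpha\), hence \(\lambda=0\), and the sum reduces to \(\delta_{\lambda,0}\sum_\alpha e_{-x^{-1}\cdot\alpha}=\delta_{\lambda,0}1\). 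The other identity, \(m(\id\otimes S)\Delta=\eta\varepsilon\), needs \(\tau_x(\alpha,-\alpha)\tau_x(-\alpha,\alpha)^{-1}\) to cancel. This follows from the cocycle identity applied to the triple \((\alpha,-\alpha,\alpha)\) together with normalization. Because \(H\) is finite dimensional, and we do not need \(\Gamma\) to be finite since the check is degreewise, a bialgebra with a convolution inverse of \(\id\) is a Hopf algebra.

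Finally we check the extension properties. The map \(\pi(e_\lambda u_x)=\delta_{\lambda,0}x\) is a Hopf map. It is cocentral because both tensor legs of \eqref{eq:coproduct-tau} lie in degree \(x\). The coinvariants are \(\kk^A=H^{\operatorname{co}\kk\Gamma}\), and \(x\mapsto u_x\) is an algebra section, so the extension is algebra-split. The step most likely to cause trouble is the careful bookkeeping of the action convention in \(\Delta(u_x)\Delta(u_y)\): getting \(x^{-1}\) rather than \(x\) in \eqref{eq:crossed-tau} depends on the identity \(u_x e_\alpha u_x^{-1}=e_{x\cdot\alpha}\), so we will write this conjugation out explicitly.
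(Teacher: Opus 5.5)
Your proposal is correct and follows essentially the paper's route. Coefficient comparisons on the basis $e_\lambda u_x$ give the cocycle, normalization and crossed identities; your conjugation $J_x(x\cdot J_y)(u_{xy}\otimes u_{xy})$ is the same computation the paper does on basis elements. The antipode and extension properties are then checked directly, with $\tau_x(\alpha,-\alpha)=\tau_x(-\alpha,\alpha)$ coming from the triple $(\alpha,-\alpha,\alpha)$.

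There are two harmless slips. First, $H$ need not be finite dimensional when $\Gamma$ is infinite; this does not matter, since a bialgebra whose $\id$ is convolution-invertible is a Hopf algebra by definition. Second, for exactness you should also record $\ker\pi=H(\kk^A)^+$, which is immediate from the smash-product basis.
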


\begin{proof}
Assume first that formulas~\eqref{eq:smash-product}
and~\eqref{eq:coproduct-tau} define a Hopf algebra. Applying
coassociativity to \(u_x\) and comparing the coefficient of
\((e_\alpha u_x)\otimes(e_\beta u_x)\otimes(e_\gamma u_x)\) gives
\[
\tau_x(\alpha,\beta)\tau_x(\alpha+\beta,\gamma)
=
\tau_x(\beta,\gamma)\tau_x(\alpha,\beta+\gamma).
\]
The counit identities give
\(\tau_x(0,\lambda)=\tau_x(\lambda,0)=1\).
Thus each \(\tau_x\) is a normalized \(2\)-cocycle on \(A\).

Since \(u_xu_y=u_{xy}\), multiplicativity of the coproduct, followed by
comparison of the coefficient of
\((e_\alpha u_{xy})\otimes(e_\beta u_{xy})\), gives
\(\tau_{xy}(\alpha,\beta)=\tau_x(\alpha,\beta)
\tau_y(x^{-1}\!\cdot\alpha,x^{-1}\!\cdot\beta)\). Thus \(\tau\) is a
strict Hopf datum.

Conversely, assume that \(\tau\) is a strict Hopf datum. The algebra
\(\kk^A\rtimes\kk\Gamma\) is associative by construction, normalization
gives the counit identities, and the \(2\)-cocycle identity gives
coassociativity. Also, \eqref{eq:crossed-tau} gives \(\tau_1=1\) and,
together with the smash-product multiplication, implies that \(\Delta\)
is multiplicative on the basis elements \(e_\lambda u_x\). Indeed, if
\(\lambda=x\cdot\mu\), the coefficient of
\((e_\alpha u_{xy})\otimes(e_\beta u_{xy})\) in
\(\Delta(e_\lambda u_x)\Delta(e_\mu u_y)\) is
\(\tau_x(\alpha,\beta)
\tau_y(x^{-1}\!\cdot\alpha,x^{-1}\!\cdot\beta)\), which equals
\(\tau_{xy}(\alpha,\beta)\) by~\eqref{eq:crossed-tau}; if
\(\lambda\ne x\cdot\mu\), both sides vanish.

Since \(\tau_1=1\), the copy of \(\kk^A\) is a Hopf subalgebra. The
smash-product formula makes \(\pi\) an algebra map, while normalization
gives \((\pi\otimes\pi)\Delta=\Delta\pi\) and
\(\varepsilon=\varepsilon_\Gamma\pi\), where \(\varepsilon_\Gamma\)
is the counit of \(\kk\Gamma\).
For the quotient map \(\pi(e_\lambda u_x)=\delta_{\lambda,0}x\),
normalization gives
\((\pi\otimes\id)\Delta(e_\lambda u_x)
=(\pi\otimes\id)\Delta^{\mathrm{op}}(e_\lambda u_x)
=x\otimes e_\lambda u_x\), so \(\pi\) is cocentral. Its induced coaction is
\((\id\otimes\pi)\Delta(e_\lambda u_x)=e_\lambda u_x\otimes x\), so the
coinvariants are \(\kk^A\); the smash-product basis also gives
\(\ker\pi=H\,(\kk^A)^+\), where
\((\kk^A)^+=\ker(\varepsilon|_{\kk^A})\). Thus the sequence is exact. Finally,
\(\gamma:\kk\Gamma\to H\), \(\gamma(x)=u_x\), is a unital algebra map
and a right \(\kk\Gamma\)-comodule section of \(\pi\), so the extension is
algebra-split.

With \(S\) as in~\eqref{eq:antipode}, one has
\(m(S\otimes\id)\Delta(e_\lambda u_x)=\delta_{\lambda,0}1\).
Indeed, the only nonzero summands have \(\beta=-\alpha\). The same
calculation for \(m(\id\otimes S)\Delta\) uses
\(\tau_x(\alpha,-\alpha)=\tau_x(-\alpha,\alpha)\), which follows from
the cocycle identity with \((\alpha,\beta,\gamma)=(\alpha,-\alpha,\alpha)\).
Thus \(S\) is the antipode, and the formulas define the required Hopf
algebra.

\end{proof}

Thus strict Hopf data parametrize Hopf structures on the fixed smash
product algebra \(\kk^A\rtimes\kk\Gamma\). We denote the Hopf algebra
defined by \(\Gamma\), \(A\), and \(\tau\) by \(H(\Gamma,A,\tau)\);
when the strict datum is a crossed family of bicharacters denoted by
\(b\), we write \(H(\Gamma,A,b)\).
Different choices of the lifts \(u_x\) may lead to different strict
data defining equivalent extensions. This leads to the obstruction
theory developed in Section~\ref{sec:obstructions}.

\section{Obstructions and bicharacter lifts}\label{sec:obstructions}

For a fixed extension, a homogeneous section \(u_x=\gamma(x)\) determines
the cocycles \(\tau_x\). Choosing representatives of prescribed
cohomology classes is a separate part of the lifting problem. We treat
these two choices separately.
Subsection~\ref{sec:equivalence-obstruction} describes equivalence of
strict data and the general lifting obstruction, while
Subsection~\ref{sec:bicharacter} studies compatible
bicharacter-normalized representatives.

\subsection{Equivalence of strict data and the general obstruction}\label{sec:equivalence-obstruction}

We begin with the dependence on the chosen lifts
\(u_x=\gamma(x)\), including the resulting equivalence relation and the
obstruction to lifting cohomological data.

We fix \(A\), \(\Gamma\), and the action of \(\Gamma\) on \(A\).
As in Definition~\ref{def:strict-hopf-datum},
\(Z^2(A,\kk^\times)\) denotes normalized group \(2\)-cocycles. We also
write \(C^1(A,\kk^\times)\) for the group of normalized functions
on \(A\) with values in \(\kk^\times\). For
\(f\in C^1(A,\kk^\times)\), define
\[
(\partial f)(\alpha,\beta)
=
\frac{f(\alpha)f(\beta)}
     {f(\alpha+\beta)}.
\]
The coboundaries form
\(B^2(A,\kk^\times)=\partial C^1(A,\kk^\times)\), and we denote the
quotient \(Z^2(A,\kk^\times)/B^2(A,\kk^\times)\) by
\(H^2(A,\kk^\times)\).
The action of \(\Gamma\) on \(A\) induces actions on these groups; for
instance \((x\cdot f)(\lambda)=f(x^{-1}\cdot\lambda)\) for
\(f\in C^1(A,\kk^\times)\).
We reserve \(\partial\) for cochains of \(A\). The group-cohomology
differential on cochains of \(\Gamma\) is denoted by
\(\partial_\Gamma\), and is written multiplicatively when the
coefficient group is written multiplicatively.

\begin{definition}\label{def:equivalent-strict-data}
Two strict Hopf data
\(\tau,\tau'\in Z^1(\Gamma,Z^2(A,\kk^\times))\)
are called \emph{equivalent} if there exists
\(f\in Z^1(\Gamma,C^1(A,\kk^\times))\) such that
\(\tau'_x=\tau_x\,\partial f_x\) for every \(x\in\Gamma\).
\end{definition}

\begin{proposition}\label{prop:equivalence-sections}
The equivalence relation of Definition~\ref{def:equivalent-strict-data}
coincides with equivalence by Hopf algebra isomorphisms inducing the
identity on the kernel \(\kk^A\) and the quotient \(\kk\Gamma\).
\end{proposition}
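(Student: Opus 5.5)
We prove both implications by writing an arbitrary isomorphism of the required kind in the canonical bases of the two smash products. Let \(H=H(\Gamma,A,\tau)\) and \(H'=H(\Gamma,A,\tau')\), with homogeneous sections \(u_x\) and \(u'_x\).

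Suppose first that \(\phi:H\to H'\) is a Hopf algebra isomorphism that restricts to the identity on \(\kk^A\) and satisfies \(\pi'\phi=\pi\). Since \(\phi\) is a coalgebra map, it preserves the right \(\kk\Gamma\)-comodule structures, so it sends \(H_x=\kk^A u_x\) onto \(H'_x=\kk^A u'_x\). Hence \(\phi(u_x)=F_x u'_x\) for a unique \(F_x\in\kk^A\). Because \(\phi\) is invertible, \(F_x\) is invertible, and we may write \(F_x=\sum_\lambda f_x(\lambda)e_\lambda\) with \(f_x(\lambda)\in\kk^\times\).

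We then translate each property of \(\phi\) into a condition on \(f\).
\begin{itemize}
\item \emph{Counit.} The identity \(\varepsilon'\phi=\varepsilon\) applied to \(u_x\) gives \(f_x(0)=1\), so each \(f_x\) is normalized.
\item \emph{Multiplicativity.} Applying \(\phi\) to \(u_xu_y=u_{xy}\) and using \(u'_x e_\mu u_x'^{-1}=e_{x\cdot\mu}\) gives \(F_x(x\cdot F_y)=F_{xy}\). In function form this reads \(f_{xy}(\lambda)=f_x(\lambda)f_y(x^{-1}\cdot\lambda)\), which is exactly the statement \(f\in Z^1(\Gamma,C^1(A,\kk^\times))\).
\item \emph{Comultiplicativity.} Compute both sides of \(\Delta'\phi(u_x)=(\phi\otimes\phi)\Delta(u_x)\). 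Using \(\Delta(F_x)=\sum f_x(\alpha+\beta)e_\alpha\otimes e_\beta\), the left side is \(\Delta(F_x)J'_x(u'_x\otimes u'_x)\) and the right side is \(J_x(F_x\otimes F_x)(u'_x\otimes u'_x)\). Comparing coefficients yields \(\tau'_x=\tau_x\,\partial f_x\).
\end{itemize}
Here \(\partial f_x\) carries the same orientation as the paper's definition of \(\partial\); otherwise we replace \(f\) by \(f^{-1}\). Note that \(f^{-1}\) is still a crossed homomorphism, since the coefficients are abelian.

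For the converse, start from \(f\in Z^1(\Gamma,C^1(A,\kk^\times))\) with \(\tau'_x=\tau_x\,\partial f_x\). Define \(\phi(e_\lambda u_x)=f_x(\lambda)e_\lambda u'_x\), with the orientation chosen as above. This map is a linear bijection that fixes each \(e_\lambda\), because \(f_1=1\), which follows from the crossed identity. It also commutes with the quotient maps, because \(f_x(0)=1\). The three computations in the bullet list, read backwards on the basis \(e_\lambda u_x\), show that \(\phi\) is an algebra map, preserves the counit, and is comultiplicative. Finally, a bialgebra isomorphism between Hopf algebras automatically commutes with the antipodes, so \(\phi\) is a Hopf algebra isomorphism.

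The only point that needs care is that \(\phi(u_x)\) really lies in \(\kk^A u'_x\). This follows from \((\id\otimes\pi')\Delta'\phi=(\phi\otimes\pi)\Delta\). That identity holds because \(\phi\) is a coalgebra map with \(\pi'\phi=\pi\). Everything else is a coefficient comparison of the same kind as in the proof of Proposition~\ref{prop:tau-conditions}.
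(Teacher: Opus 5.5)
Your proof is correct and follows essentially the same route as the paper. You use the grading preserved by $\phi$ and the identity on $\kk^A$ to write $\phi(e_\lambda u_x)=f_x(\lambda)e_\lambda u'_x$, then read off normalization, the crossed identity and $\tau'_x=\tau_x\,\partial f_x$ from the counit (or quotient) condition, multiplicativity and comultiplicativity, and the converse is the same computation run backwards. Your orientation hedge is unnecessary: the coefficient comparison $f_x(\alpha+\beta)\tau'_x(\alpha,\beta)=\tau_x(\alpha,\beta)f_x(\alpha)f_x(\beta)$ yields exactly the paper's convention for $\partial$.
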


\begin{proof}
Let \(H_\tau\) and \(H_{\tau'}\) denote the two Hopf algebras. Assume
\(\tau'_x=\tau_x\partial f_x\) as in
Definition~\ref{def:equivalent-strict-data}. Then
\[
\Phi(e_\lambda u_x)
=
f_x(\lambda)e_\lambda u'_x
\]
is multiplicative by the crossed-homomorphism identity for \(f\). For
\(\alpha+\beta=\lambda\), the equality
\(f_x(\lambda)\tau'_x(\alpha,\beta)
=\tau_x(\alpha,\beta)f_x(\alpha)f_x(\beta)\) shows that \(\Phi\)
intertwines the coproducts. Normalization gives \(f_x(0)=1\), and the
crossed identity gives \(f_1=1\); hence \(\Phi\) fixes the quotient and
the kernel. Replacing \(f_x\) with \(f_x^{-1}\) gives its inverse.

Conversely, let \(\Phi\) be such an isomorphism from \(H_\tau\) to
\(H_{\tau'}\). Since \(\Phi\) is the identity on \(\kk^A\) and
on the quotient, it preserves the homogeneous component over \(x\), and
also the one-dimensional left \(\kk^A\)-weight spaces
\(e_\lambda(H_\tau)_x=\kk e_\lambda u_x\). Hence
\[
\Phi(e_\lambda u_x)=f_x(\lambda)e_\lambda u'_x
\]
for uniquely determined scalars \(f_x(\lambda)\in\kk^\times\). The
identity on \(\kk^A\) gives \(f_1(\lambda)=1\), and the quotient
condition gives \(f_x(0)=1\). Compatibility with multiplication gives
\(f_{xy}(\lambda)=f_x(\lambda)f_y(x^{-1}\cdot\lambda)\),
so \(f\in Z^1(\Gamma,C^1(A,\kk^\times))\), while compatibility with the
coproduct gives \(\tau'_x=\tau_x\partial f_x\).
Equivalently, replacing the section \(x\mapsto u_x\) by
\[
u_x
\longmapsto
\sum_{\lambda\in A}
f_x(\lambda)^{-1}e_\lambda u_x
\]
produces precisely this transformation of the strict datum.
\end{proof}

Thus the group of equivalence classes with fixed \(A\), \(\Gamma\), and
action is
\begin{equation}\label{eq:E}
\Ecal(A,\Gamma)
=
\frac{Z^1(\Gamma,Z^2(A,\kk^\times))}
     {\partial Z^1(\Gamma,C^1(A,\kk^\times))}.
\end{equation}

\begin{remark}
For the fixed action of \(\Gamma\) on \(A\) and the trivial coaction,
\(\Ecal(A,\Gamma)\) is the subgroup of
\(\Opext(\kk\Gamma,\kk^A;\cdot,\rho_{\mathrm{triv}})\) consisting of
classes represented by the trivial algebra cocycle \(\sigma=1\).
\end{remark}

The exact sequence
\[
1
\longrightarrow
B^2(A,\kk^\times)
\longrightarrow
Z^2(A,\kk^\times)
\longrightarrow
H^2(A,\kk^\times)
\longrightarrow
1
\]
induces a map
\(\Ecal(A,\Gamma)\longrightarrow Z^1(\Gamma,H^2(A,\kk^\times))\).
A strict datum \(\tau\) maps to \(d_\tau(x)=[\tau_x]\).

Given \(d\in Z^1(\Gamma,H^2(A,\kk^\times))\), choose representatives
\(\widetilde\tau_x\in Z^2(A,\kk^\times)\) such that
\([\widetilde\tau_x]=d(x)\) and \(\widetilde\tau_1=1\), and define
\[
\Omega_d(x,y)
=
\widetilde\tau_x
(x\cdot\widetilde\tau_y)
\widetilde\tau_{xy}^{-1}.
\]
Since \(d\) is a crossed homomorphism,
\(\Omega_d(x,y)\in B^2(A,\kk^\times)\).

Theorem~\ref{thm:general-obstruction} identifies the obstruction to
lifting a cohomological datum to a strict Hopf datum. This obstruction
is the connecting class for the coefficient sequence
\(1\to B^2(A,\kk^\times)\to Z^2(A,\kk^\times)
\to H^2(A,\kk^\times)\to1\); compare with the construction of
connecting maps in group cohomology~\cite[Chapter~IV]{Brown1982}.

\begin{theorem}\label{thm:general-obstruction}
The assignment
\[
\delta(d)
=
[\Omega_d]
\in
H^2(\Gamma,B^2(A,\kk^\times))
\]
is a well-defined homomorphism. Moreover, \(d\) lifts to a strict Hopf datum if and
only if \(\delta(d)\) is trivial.
\end{theorem}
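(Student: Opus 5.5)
This is the standard connecting-homomorphism argument for the coefficient sequence \(1\to B^2(A,\kk^\times)\to Z^2(A,\kk^\times)\to H^2(A,\kk^\times)\to1\) of \(\Gamma\)-modules, carried out by hand because we work with \(Z^1\) rather than \(H^1\) of the middle term. Write the group operation multiplicatively. All three coefficient groups are abelian, and the \(\Gamma\)-action is by group automorphisms, so \(\Gamma\)-cochains with values in them form abelian groups under pointwise multiplication. I will carry out four checks in order.

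First, \(\Omega_d\) is a normalized \(2\)-cocycle on \(\Gamma\) with values in \(B^2(A,\kk^\times)\). That its values lie in \(B^2\) is already noted: \(d\) is a crossed homomorphism, so \([\Omega_d(x,y)]=d(x)\,(x\cdot d(y))\,d(xy)^{-1}=1\). Normalization follows from \(\widetilde\tau_1=1\). The cocycle identity \(\Omega_d(x,y)\Omega_d(xy,z)=(x\cdot\Omega_d(y,z))\Omega_d(x,yz)\) holds because \(\Omega_d=\partial_\Gamma\widetilde\tau\) is the coboundary of the \(1\)-cochain \(\widetilde\tau\) with values in the abelian group \(Z^2(A,\kk^\times)\). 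Hence \(\partial_\Gamma\Omega_d=1\) in \(C^3(\Gamma,Z^2)\), and the identity holds in the subgroup \(B^2\). This is a direct expansion in which all terms cancel in pairs.

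Second, the class is independent of the lifts. Another choice has the form \(\widetilde\tau'_x=\widetilde\tau_x\,\beta_x\) with \(\beta_x\in B^2(A,\kk^\times)\) and \(\beta_1=1\). Multiplicativity then gives \(\Omega'_d=\Omega_d\cdot\partial_\Gamma\beta\), and \(\partial_\Gamma\beta\) is a coboundary in \(C^2(\Gamma,B^2)\). Hence \(\delta(d)\) is well defined.

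Third, \(\delta\) is a homomorphism. If \(\widetilde\tau\) lifts \(d\) and \(\widetilde\tau'\) lifts \(d'\), then the pointwise product \(\widetilde\tau\widetilde\tau'\) is a normalized lift of \(dd'\). Since the coboundary of a product of cochains is the product of the coboundaries, \(\Omega_{dd'}=\Omega_d\Omega_{d'}\) for these lifts. By the second step this computes \(\delta(dd')=\delta(d)\delta(d')\).

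Fourth, the lifting criterion. If \(d\) lifts to a strict Hopf datum \(\tau\), choose \(\widetilde\tau=\tau\). Then \(\tau_1=1\) follows from the crossed identity, and \(\Omega_d=1\), so \(\delta(d)=1\). Conversely, suppose \(\Omega_d=\partial_\Gamma\beta\) for a cochain \(\beta\colon\Gamma\to B^2(A,\kk^\times)\), which can be taken normalized. Set \(\tau_x=\widetilde\tau_x\beta_x^{-1}\). Then \([\tau_x]=d(x)\), and \(\partial_\Gamma\tau=\Omega_d(\partial_\Gamma\beta)^{-1}=1\). Each \(\tau_x\) is a normalized \(2\)-cocycle, being a product of such, so \(\tau\in Z^1(\Gamma,Z^2(A,\kk^\times))\) is a strict Hopf datum lifting \(d\).

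The only point needing care, which I expect to be the main obstacle though it is mild, is the normalization in the converse. If a cobounding cochain \(\beta\) has \(\beta_1\neq1\), evaluating \(\Omega_d(1,1)=1\) forces \(\beta_1=1\) anyway, since \((\partial_\Gamma\beta)(1,1)=\beta_1\). We also need the convention for \(\partial_\Gamma\) to be consistent with the left action \((x\cdot\omega)(\alpha,\beta)=\omega(x^{-1}\alpha,x^{-1}\beta)\). This matches \eqref{eq:crossed-tau}, because the definition of \(\Omega_d\) uses \(x\cdot\widetilde\tau_y\).
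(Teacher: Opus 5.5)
Your proposal is correct and follows the paper's proof essentially step for step: values in \(B^2(A,\kk^\times)\) from the crossed identity for \(d\), the cocycle identity from the defect-of-lifts calculation (which you make explicit by noting \(\Omega_d=\partial_\Gamma\widetilde\tau\) in \(C^2(\Gamma,Z^2(A,\kk^\times))\)), independence of lifts via \(\Omega'_d=\Omega_d\,\partial_\Gamma\beta\), multiplicativity under products of lifts, and correcting the lifts by a cobounding cochain for the converse. Your observation that \(\beta_1=1\) is forced by \((\partial_\Gamma\beta)(1,1)=\beta_1=\Omega_d(1,1)=1\) is a harmless extra detail that the paper leaves implicit.
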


\begin{proof}
Since \(d\) is crossed, the defining formula shows that \(\Omega_d(x,y)\) lies in
\(B^2(A,\kk^\times)\). The standard defect-of-lifts calculation gives
\(\partial_\Gamma\Omega_d=1\), so
\(\Omega_d\in Z^2(\Gamma,B^2(A,\kk^\times))\).

If the representatives are replaced by
\(\widetilde\tau_x'=\widetilde\tau_x b_x\), with
\(b_x\in B^2(A,\kk^\times)\), then the new cochain is
\(\Omega'_d=\Omega_d\,\partial_\Gamma b\), where
\((\partial_\Gamma b)(x,y)=b_x(x\cdot b_y)b_{xy}^{-1}\).
Hence \(\delta(d)\) is independent of the chosen representatives.

If \(d,d'\in Z^1(\Gamma,H^2(A,\kk^\times))\) are represented by
\(\widetilde\tau\) and \(\widetilde\sigma\), respectively, then
\(\widetilde\tau_x\widetilde\sigma_x\) represents \((dd')_x\), and
commutativity of the coefficient groups gives
\(\Omega_{dd'}=\Omega_d\Omega_{d'}\).
Thus \(\delta(dd')=\delta(d)\delta(d')\), so \(\delta\) is a
homomorphism.

If \(\delta(d)\) is trivial, there is a \(1\)-cochain \(b\) with values in
\(B^2(A,\kk^\times)\) such that \(\Omega_d\,\partial_\Gamma b=1\). Replacing
\(\widetilde\tau_x\) by \(\widetilde\tau_x b_x\), we may assume that
\(\Omega_d(x,y)=1\). The family \(\{\widetilde\tau_x\}\) then satisfies
\(\widetilde\tau_{xy}=\widetilde\tau_x(x\cdot\widetilde\tau_y)\), and
therefore defines a strict Hopf datum
\(\tau\in Z^1(\Gamma,Z^2(A,\kk^\times))\) lifting \(d\).

Conversely, if \(d\) admits a lift
\(\tau\in Z^1(\Gamma,Z^2(A,\kk^\times))\), then choosing
\(\widetilde\tau_x=\tau_x\) gives \(\Omega_d(x,y)=1\),
so \(\delta(d)\) is trivial.
\end{proof}

As the connecting homomorphism in
Theorem~\ref{thm:general-obstruction}, \(\delta\) is unchanged when
\(d\) is multiplied by a principal crossed homomorphism. Hence it
factors through
\[
H^1(\Gamma,H^2(A,\kk^\times))
\longrightarrow
H^2(\Gamma,B^2(A,\kk^\times)).
\]
We nevertheless retain \(Z^1\) in the lifting sequence because the
classification with fixed kernel, quotient, and action records the
actual crossed homomorphism \(d\), not only its class modulo principal
crossed homomorphisms.

\begin{corollary}\label{cor:general-obstruction-exact}
There is an exact sequence
\[
1
\longrightarrow
\frac{Z^1(\Gamma,B^2(A,\kk^\times))}
     {\partial Z^1(\Gamma,C^1(A,\kk^\times))}
\longrightarrow
\Ecal(A,\Gamma)
\longrightarrow
Z^1(\Gamma,H^2(A,\kk^\times))
\xrightarrow{\delta}
H^2(\Gamma,B^2(A,\kk^\times)).
\]
\end{corollary}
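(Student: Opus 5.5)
The plan is to prove exactness at each of the three interior spots separately. The maps are as follows. The first map is induced by the inclusion \(Z^1(\Gamma,B^2(A,\kk^\times))\subseteq Z^1(\Gamma,Z^2(A,\kk^\times))\). The second sends \([\tau]\mapsto d_\tau\), \(d_\tau(x)=[\tau_x]\). The third is \(\delta\) from Theorem~\ref{thm:general-obstruction}.

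First I check well-definedness. The image \(\partial Z^1(\Gamma,C^1(A,\kk^\times))\) lies inside \(Z^1(\Gamma,B^2(A,\kk^\times))\): the pointwise \(\partial\) is \(\Gamma\)-equivariant, because the action on \(A\) is by automorphisms, so it carries crossed homomorphisms to crossed homomorphisms. Hence the left quotient makes sense and maps into \(\Ecal(A,\Gamma)\). The map \(\tau\mapsto d_\tau\) kills \(\partial Z^1(\Gamma,C^1)\), since \([\tau_x\partial f_x]=[\tau_x]\). It lands in \(Z^1(\Gamma,H^2(A,\kk^\times))\) because the projection \(Z^2\to H^2\) is a \(\Gamma\)-equivariant homomorphism.

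Next I prove injectivity of the first map. If \(\beta\in Z^1(\Gamma,B^2)\) becomes trivial in \(\Ecal(A,\Gamma)\), then by \eqref{eq:E} it already lies in \(\partial Z^1(\Gamma,C^1)\), the same subgroup we quotient by on the left, so its class on the left is trivial. Exactness at \(\Ecal(A,\Gamma)\) also follows directly. A class \([\tau]\) maps to the trivial crossed homomorphism exactly when every \(\tau_x\) is a coboundary, that is, when \(\tau\in Z^1(\Gamma,B^2(A,\kk^\times))\); this is precisely the image of the first map.

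Exactness at \(Z^1(\Gamma,H^2(A,\kk^\times))\) is the content of Theorem~\ref{thm:general-obstruction}. A crossed homomorphism \(d\) lies in the image exactly when it lifts to some strict Hopf datum \(\tau\), which the theorem says happens if and only if \(\delta(d)\) is trivial. Since \(\delta\) is a homomorphism, this gives \(\operatorname{im}=\ker\delta\).

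The only point needing some care is that \(\partial\) commutes with the \(\Gamma\)-action, \(x\cdot\partial f=\partial(x\cdot f)\). This ensures that \(\partial f\) is crossed whenever \(f\) is. It is immediate from \((x\cdot f)(\lambda)=f(x^{-1}\cdot\lambda)\) and additivity of \(x^{-1}\cdot{-}\). Everything else is formal.
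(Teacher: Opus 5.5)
Your proposal is correct and follows essentially the same route as the paper. Injectivity holds because both quotients are taken by the same subgroup \(\partial Z^1(\Gamma,C^1(A,\kk^\times))\), exactness at \(\Ecal(A,\Gamma)\) holds because the kernel of \(Z^1(\Gamma,Z^2(A,\kk^\times))\to Z^1(\Gamma,H^2(A,\kk^\times))\) is \(Z^1(\Gamma,B^2(A,\kk^\times))\), and exactness at \(Z^1(\Gamma,H^2(A,\kk^\times))\) is the lifting criterion of Theorem~\ref{thm:general-obstruction}; your explicit check that \(\partial\) is \(\Gamma\)-equivariant only fills in a detail the paper leaves implicit.
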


\begin{proof}
The inclusion \(B^2(A,\kk^\times)\hookrightarrow
Z^2(A,\kk^\times)\) induces a map between the two quotients. The
induced map is injective because
\(\partial Z^1(\Gamma,C^1(A,\kk^\times))\) is contained in
\(Z^1(\Gamma,B^2(A,\kk^\times))\), so both quotients use the same
subgroup. Exactness at
\(\Ecal(A,\Gamma)\) follows because the kernel of the projection from
\(Z^1(\Gamma,Z^2(A,\kk^\times))\) to
\(Z^1(\Gamma,H^2(A,\kk^\times))\) is
\(Z^1(\Gamma,B^2(A,\kk^\times))\). Exactness at
\(Z^1(\Gamma,H^2(A,\kk^\times))\) is precisely the lifting criterion
of Theorem~\ref{thm:general-obstruction}.
\end{proof}

\subsection{Bicharacter lifts and the bicharacter obstruction}\label{sec:bicharacter}

The obstruction in Theorem~\ref{thm:general-obstruction} governs strict
lifts. We also study the bicharacter-normalized lifting problem, in which
the representatives are bilinear. Any such lift is a strict lift.

Let \(\Bich(A)\) be the group of bicharacters
\(A\times A\to\kk^\times\). Define
\[
\Alt(b)(\alpha,\beta)
=
\frac{b(\alpha,\beta)}
     {b(\beta,\alpha)}.
\]
Let \(\SymBich(A)\) be the subgroup of symmetric bicharacters and
\(\AltBich(A)\) the group of alternating bicharacters.
Here alternating means that \(a(\lambda,\lambda)=1\) for every
\(\lambda\in A\), a condition that is relevant in the presence of
\(2\)-torsion.

For finite abelian \(A\) over an algebraically closed field \(\kk\) of
characteristic zero, the inclusion of bicharacters and the commutator
map induce natural isomorphisms~\cite[Proposition~2.6]{Tambara2000}
\[
\Bich(A)/\SymBich(A)
\cong
H^2(A,\kk^\times)
\cong
\AltBich(A).
\]
Thus each cohomology class has a bicharacter representative, and its
class is recovered from the alternating part. The corresponding short
exact sequence of \(\Gamma\)-modules is
\[
1
\longrightarrow
\SymBich(A)
\longrightarrow
\Bich(A)
\xrightarrow{\Alt}
\AltBich(A)
\longrightarrow
1.
\]

\begin{definition}
A \emph{bicharacter-normalized Hopf datum} is a crossed homomorphism
\(b\in Z^1(\Gamma,\Bich(A))\).
It defines a strict Hopf datum by taking \(\tau_x=b_x\). If
\(d\in Z^1(\Gamma,\AltBich(A))\),
a \emph{bicharacter lift} of \(d\) is a bicharacter-normalized Hopf
datum \(b\) such that \(\Alt(b_x)=d_x\) for every \(x\in\Gamma\).
\end{definition}

Let \(d\in Z^1(\Gamma,\AltBich(A))\). Choose lifts
\(b_x\in\Bich(A)\) satisfying \(\Alt(b_x)=d_x\) and \(b_1=1\). Their defect
\[
\Omega^{\operatorname{bich}}_d(x,y)
=
b_x(x\cdot b_y)b_{xy}^{-1}
\]
has trivial alternating part and therefore lies in \(\SymBich(A)\).

Theorem~\ref{thm:bich-obstruction} shows that the expression
\[
\operatorname{ob}_{\operatorname{bich}}(d)
=
[\Omega^{\operatorname{bich}}_d]
\in
H^2(\Gamma,\SymBich(A))
\]
is the connecting obstruction for the coefficient sequence
\(1\to\SymBich(A)\to\Bich(A)
\xrightarrow{\Alt}\AltBich(A)\to1\).

\begin{theorem}\label{thm:bich-obstruction}
The cochain \(\Omega^{\operatorname{bich}}_d\) is a \(2\)-cocycle with
values in \(\SymBich(A)\). The assignment
\(d\longmapsto\operatorname{ob}_{\operatorname{bich}}(d)\)
is a well-defined homomorphism. The obstruction
\(\operatorname{ob}_{\operatorname{bich}}(d)\) vanishes if and only if
\(d\) admits a bicharacter lift. If the obstruction vanishes, the set
of such lifts is a torsor under \(Z^1(\Gamma,\SymBich(A))\).
\end{theorem}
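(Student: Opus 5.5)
The plan is to run the standard connecting-homomorphism argument for the short exact sequence of \(\Gamma\)-modules \(1\to\SymBich(A)\to\Bich(A)\xrightarrow{\Alt}\AltBich(A)\to1\), following the proof of Theorem~\ref{thm:general-obstruction} step by step with \(B^2(A,\kk^\times)\subseteq Z^2(A,\kk^\times)\) replaced by \(\SymBich(A)\subseteq\Bich(A)\). First I would check that \(\Alt\) is \(\Gamma\)-equivariant and a homomorphism. Equivariance holds because \((x\cdot b)(\alpha,\beta)=b(x^{-1}\alpha,x^{-1}\beta)\) commutes with swapping the arguments. Hence
\[
\Alt\bigl(\Omega^{\operatorname{bich}}_d(x,y)\bigr)=d_x(x\cdot d_y)d_{xy}^{-1}=1,
\]
since \(d\) is crossed, so \(\Omega^{\operatorname{bich}}_d(x,y)\in\SymBich(A)\). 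The cocycle identity \(\partial_\Gamma\Omega^{\operatorname{bich}}_d=1\) is the usual defect-of-lifts computation: \(\Omega^{\operatorname{bich}}_d=\partial_\Gamma b\) computed inside the abelian group of \(\Bich(A)\)-valued cochains, and \(\partial_\Gamma\circ\partial_\Gamma=1\) there. Normalization \(b_1=1\) gives a normalized cocycle.

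Next comes well-definedness. Any other lift has the form \(b'_x=b_xs_x\) with \(s_x\in\SymBich(A)\) and \(s_1=1\). By commutativity, \(\Omega'=\Omega\,\partial_\Gamma s\), so the class in \(H^2(\Gamma,\SymBich(A))\) is unchanged. For the homomorphism property, lifts of \(d\) and \(d'\) multiply to a lift of \(dd'\), and the defects multiply, exactly as in Theorem~\ref{thm:general-obstruction}.

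For the lifting criterion, suppose the class vanishes, so \(\Omega=\partial_\Gamma s^{-1}\) for some \(1\)-cochain \(s\) with values in \(\SymBich(A)\). The cocycle \(\Omega\) is normalized, and I would first replace \(s\) so that \(s_1=1\). This is possible because the cocycle identity at \((1,1)\) forces \(s_1\) to be trivial whenever \(\Omega\) is normalized; alternatively one can just drop the requirement, since \(s_1=1\) follows from \(\partial_\Gamma\)-triviality at \((1,1)\). Then \(b'_x=b_xs_x\) still satisfies \(\Alt(b'_x)=d_x\) and has trivial defect, so \(b'\in Z^1(\Gamma,\Bich(A))\) is a bicharacter lift. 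Conversely, a bicharacter lift can be used as the chosen lifts, and then the defect is identically \(1\).

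For the torsor statement, if \(b\) and \(b'\) are two bicharacter lifts, then \(s_x=b'_xb_x^{-1}\) has trivial alternating part, so \(s_x\in\SymBich(A)\). It is also the quotient of two crossed homomorphisms, so \(s\in Z^1(\Gamma,\SymBich(A))\). Conversely, multiplying a lift by such an \(s\) yields another lift. The action is free and transitive, so the set of lifts is a torsor. No step is a real obstacle. The only point needing care is keeping track of normalization (\(b_1=1\), \(s_1=1\)) and of the \(\Gamma\)-equivariance of \(\Alt\), which makes all the coefficient maps module maps.
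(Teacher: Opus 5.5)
Your proposal is correct and follows the paper's proof essentially step for step. It runs the connecting-homomorphism argument for \(1\to\SymBich(A)\to\Bich(A)\xrightarrow{\Alt}\AltBich(A)\to1\): trivial alternating part of the defect, independence of lifts via \(\Omega\,\partial_\Gamma s\), multiplicativity, correcting the lifts by a symmetric \(1\)-cochain, and the \(Z^1(\Gamma,\SymBich(A))\)-torsor via quotients of two crossed lifts. Your extra bookkeeping is harmless and accurate: \(s_1=1\) is forced by evaluating \(\partial_\Gamma s\) at \((1,1)\).
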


\begin{proof}
Since \(d\) is a crossed homomorphism and \(\Alt\) is
\(\Gamma\)-equivariant, we have
\[
\Alt\bigl(\Omega^{\operatorname{bich}}_d(x,y)\bigr)
=
d_x(x\cdot d_y)d_{xy}^{-1}=1.
\]
Thus \(\Omega^{\operatorname{bich}}_d(x,y)\in\SymBich(A)\). The
standard defect-of-lifts calculation gives
\(\partial_\Gamma\Omega^{\operatorname{bich}}_d=1\). Hence
\(\Omega^{\operatorname{bich}}_d\) is a \(2\)-cocycle with values in
\(\SymBich(A)\).

If the lifts are replaced by \(b'_x=b_x s_x\), with
\(s_x\in\SymBich(A)\), then
\(\Omega'^{\operatorname{bich}}_d
=\Omega^{\operatorname{bich}}_d\,\partial_\Gamma s\).
Thus the cohomology class is independent of the chosen lifts.

If \(d,d'\) are represented by bicharacter families \(b\) and \(c\),
respectively, then \(bc\) represents \(dd'\), and
\(\Omega^{\operatorname{bich}}_{dd'}
=\Omega^{\operatorname{bich}}_d\Omega^{\operatorname{bich}}_{d'}\).
Hence the obstruction assignment is a homomorphism.

The class vanishes precisely when a \(1\)-cochain
\(s\in C^1(\Gamma,\SymBich(A))\) satisfies
\(\Omega^{\operatorname{bich}}_d\,\partial_\Gamma s=1\). Replacing
\(b_x\) by \(b_x s_x\) then gives \(b_{xy}=b_x(x\cdot b_y)\),
which is exactly the crossed relation for a bicharacter-normalized Hopf
datum. The converse is obtained by taking the lifts to be the crossed
homomorphism itself.

If \(b\) and \(b'\) are two crossed lifts of \(d\), then
\(b'_x b_x^{-1}\in\SymBich(A)\) and the crossed identities imply that
\((b'_x b_x^{-1})_x\) is a \(1\)-cocycle. Conversely, multiplication by
any element of \(Z^1(\Gamma,\SymBich(A))\) sends one crossed lift of
\(d\) to another. This is the torsor statement.
\end{proof}

Thus the bicharacter obstruction induces
\[
H^1(\Gamma,\AltBich(A))
\longrightarrow
H^2(\Gamma,\SymBich(A)).
\]
The Hopf data themselves remain actual crossed homomorphisms.

The identifications in~\cite[Proposition~2.6]{Tambara2000} imply that
every symmetric bicharacter is a coboundary. Let
\[
\iota:\SymBich(A)\longrightarrow B^2(A,\kk^\times)
\]
be the resulting inclusion, and let
\[
\vartheta:\AltBich(A)\xrightarrow{\sim}H^2(A,\kk^\times)
\]
be the \(\Gamma\)-equivariant isomorphism characterized by
\(\vartheta(\Alt(b))=[b]\).

For every
\(d\in Z^1(\Gamma,\AltBich(A))\), one has
\begin{equation}\label{eq:bich-strict-comparison}
H^2(\Gamma,\iota)
\bigl(\operatorname{ob}_{\operatorname{bich}}(d)\bigr)
=
\delta(\vartheta_*d),
\qquad
\vartheta_*d=\vartheta\circ d.
\end{equation}
Indeed, if \(b_x\) are bicharacter representatives satisfying
\(\Alt(b_x)=d_x\), then they are also normalized \(2\)-cocycle representatives of
\(\vartheta(d_x)\), and both sides are represented by the same defect
\(b_x(x\cdot b_y)b_{xy}^{-1}\),
viewed respectively in \(\SymBich(A)\) and in
\(B^2(A,\kk^\times)\). Consequently, a nontrivial bicharacter obstruction
rules out a strict lift precisely when its image under
\(H^2(\Gamma,\iota)\) is nontrivial.

\begin{definition}
Let
\[
\Quad(A)
=
\{
q\in C^1(A,\kk^\times)
\mid
\partial q\in\SymBich(A)
\}.
\]
We call its elements quadratic cochains.
\end{definition}

The possible discrepancy between the two obstructions is measured by
\[
Q(A)
=
B^2(A,\kk^\times)/\SymBich(A).
\]
Indeed, the exact sequence
\[
1
\longrightarrow
\SymBich(A)
\xrightarrow{\iota}
B^2(A,\kk^\times)
\longrightarrow
Q(A)
\longrightarrow
1
\]
gives
\[
\ker H^2(\Gamma,\iota)
=
\operatorname{im}\!\left(
H^1(\Gamma,Q(A))
\longrightarrow
H^2(\Gamma,\SymBich(A))
\right).
\]
Moreover, since \(\partial:C^1(A,\kk^\times)\to
B^2(A,\kk^\times)\) is surjective and the inverse image of
\(\SymBich(A)\) is \(\Quad(A)\), it induces a canonical isomorphism
\[
Q(A)
\cong
C^1(A,\kk^\times)/\Quad(A).
\]

Write \(A^\vee=\Hom(A,\kk^\times)\). There is an exact sequence
\[
1
\longrightarrow
A^\vee
\longrightarrow
\Quad(A)
\xrightarrow{\partial}
\SymBich(A)
\longrightarrow
1.
\]
Indeed, the kernel of \(\partial\) is precisely the group of
characters of \(A\). Surjectivity follows because a symmetric
bicharacter has trivial commutator, hence represents the trivial class
in \(H^2(A,\kk^\times)\)~\cite[Proposition~2.6]{Tambara2000}, and is
therefore a coboundary of a normalized \(1\)-cochain.

\begin{proposition}\label{prop:bich-equivalence}
Let \(b\) and \(b'\) be bicharacter lifts of the same datum \(d\).
They define equivalent Hopf extensions if and only if there exists
\(q\in Z^1(\Gamma,\Quad(A))\) such that
\(b'_x=b_x\partial q_x\)
for every \(x\in\Gamma\).
\end{proposition}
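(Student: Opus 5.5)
By Proposition~\ref{prop:equivalence-sections}, the Hopf extensions defined by \(b\) and \(b'\) are equivalent if and only if there is \(f\in Z^1(\Gamma,C^1(A,\kk^\times))\) with \(b'_x=b_x\,\partial f_x\) for every \(x\in\Gamma\). So I only need to show that such an \(f\) automatically takes values in \(\Quad(A)\), and conversely that every \(q\in Z^1(\Gamma,\Quad(A))\) is an admissible \(f\).

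For the converse, \(\Quad(A)\) is a \(\Gamma\)-stable subgroup of \(C^1(A,\kk^\times)\). Stability holds because \(\partial\) is \(\Gamma\)-equivariant and \(\SymBich(A)\) is \(\Gamma\)-stable: if \(\partial q\) is a symmetric bicharacter, so is \(\partial(x\cdot q)=x\cdot\partial q\). Hence \(Z^1(\Gamma,\Quad(A))\subseteq Z^1(\Gamma,C^1(A,\kk^\times))\). Given \(q\) with \(b'_x=b_x\partial q_x\), the family \(f=q\) satisfies Definition~\ref{def:equivalent-strict-data}, so \(b\) and \(b'\) are equivalent.

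For the forward direction, suppose \(b'_x=b_x\,\partial f_x\) with \(f\) a crossed homomorphism into \(C^1(A,\kk^\times)\). Then
\[
\partial f_x=b'_x b_x^{-1}.
\]
The right-hand side is a product of bicharacters, hence a bicharacter. It is symmetric because
\[
\Alt(b'_x b_x^{-1})=\Alt(b'_x)\Alt(b_x)^{-1}=d_xd_x^{-1}=1,
\]
since both \(b\) and \(b'\) lift \(d\). So \(\partial f_x\in\SymBich(A)\), which means \(f_x\in\Quad(A)\). Combined with the crossed identity for \(f\), this gives \(f\in Z^1(\Gamma,\Quad(A))\), and we take \(q=f\).

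There is no real obstacle here. The only point to check is that "symmetric" in \(\SymBich(A)\) means exactly \(\Alt=1\), i.e. \(b(\alpha,\beta)=b(\beta,\alpha)\). This is immediate from the definition of \(\Alt\), so the kernel of \(\Alt\) on \(\Bich(A)\) is precisely \(\SymBich(A)\). Normalization of \(f_x\) is already part of \(C^1(A,\kk^\times)\), so nothing further is needed.
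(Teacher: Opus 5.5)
Your proposal is correct and follows essentially the same route as the paper: you reduce to Proposition~\ref{prop:equivalence-sections}, observe that \(\partial f_x=b'_xb_x^{-1}\) has trivial alternating part and so lies in \(\SymBich(A)\), which forces \(f\) to take values in \(\Quad(A)\), and for the converse use the inclusion \(Z^1(\Gamma,\Quad(A))\subseteq Z^1(\Gamma,C^1(A,\kk^\times))\). Your explicit check that \(\Quad(A)\) is \(\Gamma\)-stable, which the paper leaves implicit, is a welcome detail.
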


\begin{proof}
Set \(s_x=b'_x b_x^{-1}\). Since \(b\) and \(b'\) lift the same datum
and are crossed homomorphisms, \(s\in Z^1(\Gamma,\SymBich(A))\).
Proposition~\ref{prop:equivalence-sections} says that they are equivalent
precisely when \(s_x=\partial q_x\) for some
\(q\in Z^1(\Gamma,C^1(A,\kk^\times))\). Since every \(s_x\) is
symmetric, any such \(q\) takes values in \(\Quad(A)\). Conversely, if
\(q\in Z^1(\Gamma,\Quad(A))\) and \(b'_x=b_x\partial q_x\), then \(q\)
also belongs to \(Z^1(\Gamma,C^1(A,\kk^\times))\), so
Proposition~\ref{prop:equivalence-sections} gives the equivalence.
\end{proof}

\begin{remark}
If the set of bicharacter lifts of \(d\) is nonempty, equivalently if
\(\operatorname{ob}_{\operatorname{bich}}(d)\) is trivial, then it is a torsor
under \(Z^1(\Gamma,\SymBich(A))\).
By Proposition~\ref{prop:bich-equivalence}, equivalence quotients by
the image of \(\partial Z^1(\Gamma,\Quad(A))\). Thus bicharacter lifts
modulo equivalence are controlled by
\[
\frac{Z^1(\Gamma,\SymBich(A))}
     {\partial Z^1(\Gamma,\Quad(A))}.
\]
\end{remark}

\subsection{The odd exponent case and canonical lifts}\label{sec:odd}

The obstruction in Theorem~\ref{thm:bich-obstruction}
vanishes automatically when \(A\) has odd exponent.

Assume that \(A\) has odd exponent \(n\), and let
\(h=2^{-1}\in\Z/n\Z\). Every value of an alternating bicharacter on
\(A\) is an \(n\)-th root of unity, so for \(a\in\AltBich(A)\) the
pointwise power
\[
s(a)=a^h
\]
is well defined and is again a bicharacter. Since
\(a(\mu,\lambda)=a(\lambda,\mu)^{-1}\), one has
\(\Alt(s(a))=a^{2h}=a\). Thus
\(s:\AltBich(A)\to\Bich(A)\) is a group homomorphism and a
\(\Gamma\)-equivariant section of \(\Alt\).

\begin{corollary}\label{cor:odd-exponent}
If \(A\) has odd exponent \(n\), every
\(d\in Z^1(\Gamma,\AltBich(A))\)
admits the canonical half-alternating lift
\[
\widehat d_x=d_x^h,
\qquad
h=2^{-1}\in\Z/n\Z.
\]
In particular, both
\(\operatorname{ob}_{\operatorname{bich}}(d)\) and
\(\delta(\vartheta_*d)\) are trivial.
\end{corollary}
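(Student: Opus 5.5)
The plan is to check directly that the family \(\widehat d_x=d_x^h\) is a bicharacter lift of \(d\) in the sense of the definition preceding Theorem~\ref{thm:bich-obstruction}. Then the two vanishing statements follow from Theorem~\ref{thm:bich-obstruction} and the comparison~\eqref{eq:bich-strict-comparison}.

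First, the section \(s(a)=a^h\) from the discussion above is well defined on \(\AltBich(A)\), since \(A\) has exponent \(n\) and every value lies in \(\mu_n\). Bilinearity of \(a\) passes to \(a^h\), so \(s(a)\in\Bich(A)\). The relation \(a(\mu,\lambda)=a(\lambda,\mu)^{-1}\) gives \(\Alt(s(a))=a^{2h}=a\), because \(2h\equiv1\pmod n\) and \(a^n=1\). The map \(s\) is a homomorphism because \(\AltBich(A)\) is abelian and exponentiation is pointwise.

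Second, \(s\) is \(\Gamma\)-equivariant: \((x\cdot a)^h(\alpha,\beta)=a(x^{-1}\cdot\alpha,x^{-1}\cdot\beta)^h=(x\cdot a^h)(\alpha,\beta)\). Hence composing the crossed homomorphism \(d\) with the equivariant homomorphism \(s\) gives a crossed homomorphism:
\[
\widehat d_{xy}=s(d_x\,(x\cdot d_y))=s(d_x)\,(x\cdot s(d_y))=\widehat d_x\,(x\cdot\widehat d_y).
\]
So \(\widehat d\in Z^1(\Gamma,\Bich(A))\) with \(\Alt(\widehat d_x)=d_x\), which is a bicharacter lift.

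Finally, if we choose the lifts \(b_x=\widehat d_x\) in the definition of \(\Omega^{\operatorname{bich}}_d\), the defect is identically \(1\). Equivalently, by Theorem~\ref{thm:bich-obstruction}, \(\operatorname{ob}_{\operatorname{bich}}(d)=1\). Applying \(H^2(\Gamma,\iota)\) and using~\eqref{eq:bich-strict-comparison} then gives \(\delta(\vartheta_*d)=1\).

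There is no real obstacle here. The only point needing care is that the exponent \(h\) is taken mod \(n\), which requires that every alternating bicharacter has values of order dividing \(n\); this is immediate from bilinearity and \(nA=0\).
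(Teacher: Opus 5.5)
Your proposal is correct and follows essentially the same argument as the paper: apply the \(\Gamma\)-equivariant homomorphic section \(s(a)=a^h\) pointwise to obtain a crossed bicharacter lift, then conclude via Theorem~\ref{thm:bich-obstruction}. The only difference is in the last step, and it is cosmetic. The paper observes directly that \(\widehat d\) is a strict lift of \(\vartheta_*d\) and invokes Theorem~\ref{thm:general-obstruction}, whereas you push the trivial class through \eqref{eq:bich-strict-comparison}, which rests on that same observation.
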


\begin{proof}
Applying the \(\Gamma\)-equivariant homomorphism \(s(a)=a^h\)
pointwise gives
\(\widehat d_{xy}=s(d_x(x\cdot d_y))
=\widehat d_x(x\cdot\widehat d_y)\). Thus \(\widehat d\) is a
bicharacter lift of \(d\), so
Theorem~\ref{thm:bich-obstruction} shows that
\(\operatorname{ob}_{\operatorname{bich}}(d)\) is trivial. It is also a
strict lift, and Theorem~\ref{thm:general-obstruction} shows that
\(\delta(\vartheta_*d)\) is trivial.
\end{proof}

\begin{remark}
For the classification of extensions, the resulting lifts must still be
quotiented by the equivalence in Proposition~\ref{prop:bich-equivalence}.
\end{remark}

\section{Permutation modules}\label{sec:permutation}

We apply the bicharacter obstruction of
Theorem~\ref{thm:bich-obstruction} to permutation modules. In this case
the coefficient modules \(\SymBich(A)\) and \(\AltBich(A)\) can be
described explicitly, and the resulting decomposition reduces the
relevant cohomology groups to stabilizer cohomology. We specialize
to the symmetric group and the crossed homomorphism
\(d_{\operatorname{inv}}\).

The permutation-module cohomological data remain crossed homomorphisms;
Shapiro's lemma computes only their classes modulo principal crossed
homomorphisms.

Let \(X\) be a finite \(\Gamma\)-set, let \(n\ge 2\), and set
\(R=\Z/n\Z\) and \(A=R[X]=\bigoplus_{x\in X}Rv_x\). Choose a primitive
\(n\)-th root of unity \(q\in\kk^\times\).
For \(x\in X\) and \(P\in\binom X2\), write
\(\Gamma_x=\{g\in\Gamma\mid gx=x\}\) and
\(\Gamma_P=\{g\in\Gamma\mid gP=P\}\). Let
\(\varepsilon_P:\Gamma_P\to\{\pm1\}\) be the sign of the action on
\(P\), and write \(R_{\varepsilon_P}\) for the resulting
\(\Gamma_P\)-module.

The choice of \(q\) identifies bicharacters on \(A\) with
\(R\)-bilinear maps \(B:A\times A\to R\).
Since \(A\) is free on the basis \(\{v_x\}_{x\in X}\), such a map is
determined by the matrix entries \(B_{xy}=B(v_x,v_y)\), for \(x,y\in X\).
Thus \(\Bich(A)\cong R^{X\times X}\).
The corresponding bicharacter is
\[
b_B\!\left(\sum_{x\in X}a_xv_x,\sum_{y\in X}c_yv_y\right)
=
q^{\sum_{x,y\in X}a_xc_yB_{xy}}.
\]

The alternating and symmetric parts are
\[
\AltBich(A)
=
\{B\in R^{X\times X}
\mid
B_{yx}=-B_{xy},\;
B_{xx}=0\},
\]
and
\[
\SymBich(A)
=
\{C\in R^{X\times X}
\mid
C_{yx}=C_{xy}\}.
\]

The action on \(R^{X\times X}\) is given by
\((g\cdot B)_{xy}=B_{g^{-1}x,g^{-1}y}\).

\begin{proposition}\label{prop:perm-decomposition}
There are isomorphisms of \(\Gamma\)-modules
\[
\SymBich(A)
\cong
R[X]
\oplus
R\!\left[\binom{X}{2}\right]
\]
and
\[
\AltBich(A)
\cong
\bigoplus_{P\in\Gamma\backslash\binom{X}{2}}
\Ind_{\Gamma_P}^{\Gamma}R_{\varepsilon_P},
\]
where \(\Ind_H^\Gamma\) denotes induction of modules.
\end{proposition}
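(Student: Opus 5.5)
The plan is to work entirely with matrices in \(R^{X\times X}\), using the identification of bicharacters with \(R\)-bilinear forms fixed by \(q\), and to decompose each module according to the \(\Gamma\)-orbits of the index set \(X\times X\). The action \((g\cdot B)_{xy}=B_{g^{-1}x,g^{-1}y}\) simply permutes coordinates. Hence \(R^{X\times X}\cong R[X\times X]\) as a permutation module, via \(B\mapsto\sum B_{xy}v_{(x,y)}\). I then split \(X\times X\) into the diagonal \(\{(x,x)\}\) and the off-diagonal part. The off-diagonal part maps two-to-one onto \(\binom X2\) via \((x,y)\mapsto\{x,y\}\), and both pieces are \(\Gamma\)-stable.

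For \(\SymBich(A)\) I write down the map \(C\mapsto\bigl(\sum_x C_{xx}v_x,\ \sum_{\{x,y\}}C_{xy}v_{\{x,y\}}\bigr)\). It is well defined because \(C_{xy}=C_{yx}\), and it is \(R\)-linear and bijective, since a symmetric matrix is freely determined by its diagonal entries and one entry per unordered pair. Equivariance follows from \((g\cdot C)_{gx,gy}=C_{xy}\), so the map intertwines \(g\) with the permutation action on \(X\) and on \(\binom X2\). This proves the first isomorphism.

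For \(\AltBich(A)\) the diagonal vanishes, and an alternating matrix is determined by one entry per unordered pair, but only up to the sign dictated by an orientation. I decompose \(\AltBich(A)=\bigoplus_{O}M_O\) over orbits \(O\in\Gamma\backslash\binom X2\), where \(M_O\) consists of alternating matrices supported on pairs in \(O\). Each \(M_O\) is \(\Gamma\)-stable. Fix a representative \(P=\{x_0,y_0\}\in O\) and let \(M_P\subseteq M_O\) be the submodule of matrices supported on \(\{(x_0,y_0),(y_0,x_0)\}\). This is a free \(R\)-module of rank one, generated by \(E_P\) with entry \(1\) at \((x_0,y_0)\) and \(-1\) at \((y_0,x_0)\). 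It is \(\Gamma_P\)-stable: if \(g\in\Gamma_P\) fixes both points then \(g\cdot E_P=E_P\), and if \(g\) swaps them then \(g\cdot E_P=-E_P\). So \(M_P\cong R_{\varepsilon_P}\). Moreover \(M_O=\bigoplus_{gP\in\Gamma/\Gamma_P}g\,M_P\), since the translates \(gM_P\) are exactly the rank-one pieces supported on the pairs \(gP\), and these pairs run through \(O\) once each. The standard characterization of induced modules (a module that is a direct sum of translates of a \(\Gamma_P\)-submodule indexed by \(\Gamma/\Gamma_P\)) then gives \(M_O\cong\Ind_{\Gamma_P}^\Gamma R_{\varepsilon_P}\). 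Equivalently, the map \(\kk\)-free \(\Z\Gamma\otimes_{\Z\Gamma_P}R_{\varepsilon_P}\to M_O\), \(g\otimes r\mapsto r\,g\cdot E_P\), is a surjection between free \(R\)-modules of equal rank \(|O|\).

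The only delicate point is the sign bookkeeping in the alternating case. One must check that \(g\cdot E_P\) depends on the coset \(g\Gamma_P\) only up to the character \(\varepsilon_P\), so that the map out of the induced module is well defined. One must also handle \(2\)-torsion correctly: when \(n\) is even, \(R_{\varepsilon_P}\) may coincide with the trivial module, and the diagonal condition \(B_{xx}=0\) must be imposed separately rather than deduced from antisymmetry. I would address this by working with the explicit basis \(\{E_{\{x,y\}}\}\) of alternating matrices, chosen with a fixed orientation of each pair. This avoids any division by \(2\).
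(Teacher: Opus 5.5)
Your proposal is correct and follows the paper's own argument. In the symmetric case you split the matrix coordinates into the diagonal, giving \(R[X]\), and the unordered pairs, giving \(R\!\left[\binom{X}{2}\right]\). In the alternating case you identify the line of each pair \(P\) with \(R_{\varepsilon_P}\) as a \(\Gamma_P\)-module, so that each orbit contributes \(\Ind_{\Gamma_P}^{\Gamma}R_{\varepsilon_P}\). You also make explicit the induced-module verification that the paper leaves implicit, namely well-definedness of \(g\otimes r\mapsto r\,g\cdot E_P\) and the rank count. The stray ``\(\kk\)-free'' in that sentence is a slip and should simply be deleted.
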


\begin{proof}
A symmetric form \(C\) is determined by the coefficients
\(C(v_x,v_x)\), indexed by \(X\), and \(C(v_x,v_y)\), indexed
independently of order by \(\binom{X}{2}\). These assignments are
\(\Gamma\)-equivariant and give
\(\SymBich(A)\cong R[X]\oplus R[\binom{X}{2}]\).

For an alternating form, the coefficient line associated with an
unordered pair \(P=\{x,y\}\) is generated by the orientation
\((x,y)\), with \((y,x)=-(x,y)\). Thus \(\Gamma_P\) acts on that line
through \(\varepsilon_P\), and the orbit of \(P\) contributes
\(\Ind_{\Gamma_P}^{\Gamma}R_{\varepsilon_P}\). Summing over the
\(\Gamma\)-orbits gives
\(\AltBich(A)\cong\bigoplus_P
\Ind_{\Gamma_P}^{\Gamma}R_{\varepsilon_P}\), with \(P\) ranging over
\(\Gamma\backslash\binom{X}{2}\).
\end{proof}

Proposition~\ref{prop:perm-decomposition} can be sharpened by keeping
track of the exact sequence relating symmetric, arbitrary, and
alternating bicharacters.

For each chosen orbit representative \(P=\{x,y\}\), fix an orientation
\((x,y)\) and let
\(\operatorname{Or}(P)=\{(x,y),(y,x)\}\) be the set of orientations of
\(P\). We use the exact sequence
\begin{equation}\label{eq:orientation-sequence}
0
\longrightarrow
R
\longrightarrow
R[\operatorname{Or}(P)]
\longrightarrow
R_{\varepsilon_P}
\longrightarrow
0,
\end{equation}
where \(R\) has the trivial \(\Gamma_P\)-action. In
\eqref{eq:orientation-sequence}, the map from \(R\) sends
\(1\longmapsto (x,y)+(y,x)\), while the map to
\(R_{\varepsilon_P}\) sends \((x,y)\longmapsto 1\) and
\((y,x)\longmapsto -1\).
Reversing the chosen orientation changes this quotient identification,
and hence the corresponding coordinate of the connecting map, by a
sign. The global connecting map is canonical, while the kernel and the
vanishing of each coordinate are independent of the choice.

\begin{proposition}\label{prop:perm-bich-exact}
There is an isomorphism of \(\Gamma\)-modules
\[
\Bich(A)
\cong
R[X]
\oplus
\bigoplus_{P\in\Gamma\backslash\binom{X}{2}}
\Ind_{\Gamma_P}^{\Gamma}R[\operatorname{Or}(P)].
\]
Under these decompositions, the exact sequence
\[
0
\longrightarrow
\SymBich(A)
\longrightarrow
\Bich(A)
\xrightarrow{\Alt}
\AltBich(A)
\longrightarrow
0
\]
is the direct sum of the diagonal sequence
\[
0\longrightarrow R[X]\xrightarrow{=}R[X]\longrightarrow0\longrightarrow0
\]
and the sequences induced from \eqref{eq:orientation-sequence}.
\end{proposition}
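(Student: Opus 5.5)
The plan is to work entirely in the matrix model \(\Bich(A)\cong R^{X\times X}\), where \(\Gamma\) acts by \((g\cdot B)_{xy}=B_{g^{-1}x,g^{-1}y}\). In this model the maps are explicit: the inclusion \(\SymBich(A)\to\Bich(A)\) is the inclusion of symmetric matrices, and \(\Alt\) sends \(B\) to \(B-B^{T}\), written additively via \(q\). The first step is to decompose the \(\Gamma\)-set \(X\times X\) as the diagonal \(\{(x,x)\}\cong X\) together with the off-diagonal ordered pairs. The off-diagonal pairs form the set \(\coprod_{P\in\binom X2}\operatorname{Or}(P)\), fibred over \(\binom X2\). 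This gives a \(\Gamma\)-equivariant splitting
\[
R^{X\times X}\cong R[X]\oplus R\Bigl[\coprod_{P}\operatorname{Or}(P)\Bigr].
\]
The second summand is a permutation module on a \(\Gamma\)-set fibred over \(\binom X2\). Grouping by \(\Gamma\)-orbits of \(\binom X2\), the standard fact \(R[\Gamma\times_{\Gamma_P}Y]\cong\Ind_{\Gamma_P}^\Gamma R[Y]\) for a \(\Gamma_P\)-set \(Y\) applies with \(Y=\operatorname{Or}(P)\). For finite \(X\) there is no distinction between induction and coinduction. This gives the stated decomposition of \(\Bich(A)\).

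The second step is to check that both maps of the exact sequence respect these splittings. On the diagonal, a symmetric matrix \(C\) contributes \(C_{xx}\). The map \(\Alt\) kills diagonal entries, since \(B_{xx}-B_{xx}=0\), and alternating matrices have zero diagonal. So the diagonal piece is \(0\to R[X]\xrightarrow{=}R[X]\to0\to0\).

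On the off-diagonal part, fix an orbit representative \(P=\{x,y\}\). A symmetric matrix has equal entries at \((x,y)\) and \((y,x)\). Its restriction to the orbit of \(P\) is therefore the image of \(\Ind R\) under \(1\mapsto(x,y)+(y,x)\), which matches the proof of Proposition~\ref{prop:perm-decomposition}. The map \(\Alt\) sends the basis element \(E_{(x,y)}\) (the matrix unit at \((x,y)\)) to the matrix with \(+1\) at \((x,y)\) and \(-1\) at \((y,x)\). Under the identification of \(\AltBich\) on that orbit with \(\Ind R_{\varepsilon_P}\), defined by reading the \((x,y)\)-coordinate, this element is \(1\). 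Likewise \(E_{(y,x)}\) maps to \(-1\). These are exactly the maps in \eqref{eq:orientation-sequence}, before induction.

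Equivariance of these identifications on the \(\Gamma_P\)-level is immediate: \(g\in\Gamma_P\) either fixes or swaps the two orientations, and \(\varepsilon_P(g)\) records which. Since \(\Ind\) is exact and commutes with direct sums, the restriction of \(0\to\SymBich\to\Bich\to\AltBich\to0\) to the orbit of \(P\) is \(\Ind_{\Gamma_P}^\Gamma\) of \eqref{eq:orientation-sequence}. Summing over orbits gives the claim.

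The main obstacle is bookkeeping rather than a conceptual difficulty. One must verify that the isomorphisms of Proposition~\ref{prop:perm-decomposition} are compatible with the new one, that is, that the squares commute on the nose. The natural approach is to construct the \(\Ind\)-isomorphism once, through the coset map \(gH\otimes m\mapsto g\cdot m\) into matrices supported on the orbit of \(P\). The same formula then defines all three vertical maps, and commutativity reduces to \(\Gamma_P\)-equivariance of the maps in \eqref{eq:orientation-sequence}. A minor point is exactness at \(R\) when \(2\) is not invertible. Here \(R\to R[\operatorname{Or}(P)]\) is injective because \((x,y)\) and \((y,x)\) are distinct basis elements, so no \(2\)-torsion issue arises.
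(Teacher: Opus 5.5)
Your proposal is correct and follows essentially the same route as the paper. You split the matrix entries into the diagonal summand \(R[X]\) and the off-diagonal entries over each pair orbit, and identify the latter with \(\Ind_{\Gamma_P}^{\Gamma}R[\operatorname{Or}(P)]\); the symmetric coefficient \((x,y)+(y,x)\) then spans the kernel of \(\Alt\) with sign-module quotient, so the sequence is the induced form of \eqref{eq:orientation-sequence} summed over orbits. Your coset-map construction of the vertical isomorphisms and the check that \(R\to R[\operatorname{Or}(P)]\) is injective without inverting \(2\) just make explicit what the paper's short proof leaves implicit.
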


\begin{proof}
The diagonal entries give the summand \(R[X]\), while the off-diagonal
entries in the orbit of \(P\) give
\(\Ind_{\Gamma_P}^{\Gamma}R[\operatorname{Or}(P)]\). On the local
orientation module, the symmetric coefficient
\((x,y)+(y,x)\) generates the kernel of \(\Alt\), and the quotient is
the sign module; this is precisely the induced form of
\eqref{eq:orientation-sequence}. Taking the direct sum of these local
sequences over the \(\Gamma\)-orbits gives the exact sequence relating
\(\SymBich(A)\), \(\Bich(A)\), and \(\AltBich(A)\).
\end{proof}

For a subgroup \(H\leq\Gamma\) and an \(H\)-module \(M\), Shapiro's
lemma for group cohomology gives a natural isomorphism
\[
H^r\!\left(\Gamma,\operatorname{Coind}_H^\Gamma M\right)
\cong H^r(H,M),
\qquad r\geq0.
\]
See~\cite[Chapter~III]{Brown1982}. When \([\Gamma:H]\) is finite, the
induced and coinduced modules are naturally isomorphic. Since \(X\) is
finite, the point and pair stabilizers in
Proposition~\ref{prop:perm-decomposition} have finite index, so the
Shapiro isomorphism applies to all the induced summands. Its naturality
in \(M\) also gives compatibility with the connecting maps of short
exact sequences.

More explicitly, the permutation summands decompose as
\[
R[X]\cong
\bigoplus_{x\in\Gamma\backslash X}\Ind_{\Gamma_x}^{\Gamma}R,
\qquad
R\!\left[\binom X2\right]\cong
\bigoplus_{P\in\Gamma\backslash\binom X2}
\Ind_{\Gamma_P}^{\Gamma}R.
\]

Applying the finite-index Shapiro isomorphism to
Proposition~\ref{prop:perm-decomposition} gives the first-cohomology and
obstruction groups in Corollary~\ref{cor:perm-shapiro}.

\begin{corollary}\label{cor:perm-shapiro}
There are isomorphisms
\[
H^2(\Gamma,\SymBich(A))
\cong
\bigoplus_{x\in\Gamma\backslash X}
H^2(\Gamma_x,R)
\oplus
\bigoplus_{P\in\Gamma\backslash\binom{X}{2}}
H^2(\Gamma_P,R)
\]
and
\[
H^1(\Gamma,\AltBich(A))
\cong
\bigoplus_{P\in\Gamma\backslash\binom{X}{2}}
H^1(\Gamma_P,R_{\varepsilon_P}).
\]
\end{corollary}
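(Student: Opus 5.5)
The plan is to combine the module decompositions of Proposition~\ref{prop:perm-decomposition} with the finite-index Shapiro isomorphism, using that group cohomology commutes with finite direct sums of coefficient modules.

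For the first isomorphism, I would start from
\[
\SymBich(A)\cong R[X]\oplus R\!\left[\tbinom{X}{2}\right].
\]
Then I would decompose each permutation module over its orbits. For a point \(x\), the map sending \(g\otimes 1\) to \(v_{gx}\) identifies the span of the orbit \(\Gamma x\) with \(\Ind_{\Gamma_x}^{\Gamma}R\). This map is well defined because \(\Gamma_x\) fixes \(v_x\), and it is bijective because the coset space \(\Gamma/\Gamma_x\) is in bijection with \(\Gamma x\). The same argument applies to each orbit of unordered pairs, using the basis vector \(e_P\) and the setwise stabilizer \(\Gamma_P\).

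Since \(X\) is finite, every stabilizer has finite index. Hence \(\Ind\cong\operatorname{Coind}\), and Shapiro's lemma gives
\[
H^2(\Gamma,\Ind_{\Gamma_x}^{\Gamma}R)\cong H^2(\Gamma_x,R),
\]
and similarly for each \(\Gamma_P\). Additivity of \(H^2(\Gamma,-)\) over the finitely many summands then yields the first displayed isomorphism.

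For the second isomorphism, I would apply \(H^1(\Gamma,-)\) to
\[
\AltBich(A)\cong\bigoplus_{P}\Ind_{\Gamma_P}^{\Gamma}R_{\varepsilon_P}.
\]
This is again a finite sum, and each summand is induced from a finite-index subgroup, so Shapiro gives \(H^1(\Gamma_P,R_{\varepsilon_P})\) for each orbit representative \(P\).

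The only point needing care, and the main obstacle, is the identification of the orbit piece of \(\AltBich(A)\) with the induced sign module. I would fix the chosen orientation \((x,y)\) of \(P\) and send \(g\otimes 1\) to the elementary alternating matrix \(E_{gx,gy}\), which has \(1\) at \((gx,gy)\) and \(-1\) at \((gy,gx)\). To see that this is well defined on the tensor product over \(\Gamma_P\), I would check that an element \(h\in\Gamma_P\) satisfies \(h\cdot E_{x,y}=\varepsilon_P(h)E_{x,y}\): if \(h\) swaps \(x\) and \(y\), then \(E_{y,x}=-E_{x,y}\). Bijectivity onto the alternating matrices supported on the orbit of \(P\) follows from \(\Gamma/\Gamma_P\cong\Gamma P\). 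These matrices span that part of \(\AltBich(A)\) because alternating forms have zero diagonal and are determined by one entry per unordered pair.

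The result is an isomorphism, but a non-canonical one: it depends on the choices of orbit representatives and orientations, which only change the identifications by signs or conjugation.
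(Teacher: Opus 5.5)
Your proposal is correct and follows the paper's own argument. You use the orbit decompositions from Proposition~\ref{prop:perm-decomposition} into induced modules \(\Ind_{\Gamma_x}^{\Gamma}R\), \(\Ind_{\Gamma_P}^{\Gamma}R\) and \(\Ind_{\Gamma_P}^{\Gamma}R_{\varepsilon_P}\), note that finite orbits give finite-index stabilizers so induction agrees with coinduction, and apply Shapiro's lemma summand by summand. Your explicit check that \(g\otimes 1\mapsto E_{gx,gy}\) is well defined over \(\Gamma_P\) re-verifies the alternating decomposition that the paper already establishes in that proposition.
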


\begin{proof}
For each point orbit, Proposition~\ref{prop:perm-decomposition}
contributes \(\Ind_{\Gamma_x}^{\Gamma}R\); for each pair orbit, it
contributes \(\Ind_{\Gamma_P}^{\Gamma}R\) to the symmetric module and
\(\Ind_{\Gamma_P}^{\Gamma}R_{\varepsilon_P}\) to the alternating
module. Applying Shapiro's isomorphism to the point- and pair-orbit
summands gives both formulas.
\end{proof}

The \(H^1\)-isomorphism in Corollary~\ref{cor:perm-shapiro} identifies
crossed homomorphisms modulo principal ones. The cocycles themselves fit
into the exact sequence
\[
\begin{gathered}
0
\longrightarrow
\AltBich(A)/\AltBich(A)^\Gamma
\longrightarrow
Z^1(\Gamma,\AltBich(A))
\\
\longrightarrow
\bigoplus_{P\in\Gamma\backslash\binom{X}{2}}
H^1(\Gamma_P,R_{\varepsilon_P})
\longrightarrow
0.
\end{gathered}
\]
The map from \(\AltBich(A)/\AltBich(A)^\Gamma\) sends the class of
\(B\) to the principal cocycle \(g\mapsto g\cdot B-B\). The map to the
direct sum of stabilizer cohomology groups is the quotient
\(Z^1\to H^1\) followed by the Shapiro isomorphism.

\begin{remark}\label{rem:perm-bich-obstruction}
Let
\[
\beta_P\colon H^1(\Gamma_P,R_{\varepsilon_P})
\longrightarrow H^2(\Gamma_P,R)
\]
be the connecting map associated with
\eqref{eq:orientation-sequence}. Under the isomorphisms of
Corollary~\ref{cor:perm-shapiro}, the bicharacter obstruction on
\(H^1\) is the direct sum of the maps \(\beta_P\), with zero components
in the diagonal summands \(H^2(\Gamma_x,R)\). Consequently, its kernel is
\[
\bigoplus_{P\in\Gamma\backslash\binom X2}\ker(\beta_P),
\]
and the bicharacter-liftable crossed homomorphisms are the inverse image
of this subgroup under \(Z^1\to H^1\). The map \(\beta_P\) vanishes if
\(\varepsilon_P\) is trivial. It also vanishes when \(2\in R^\times\),
because then \eqref{eq:orientation-sequence} has the equivariant section
\(1\mapsto \frac12((x,y)-(y,x))\). In particular, all the maps
\(\beta_P\) vanish for odd \(n\), in agreement with
Corollary~\ref{cor:odd-exponent}.
\end{remark}

\subsection{The inversion cocycle for the symmetric group}
\label{sec:symmetric}

Assume \(m\ge 2\). We specialize to \(\Gamma=\mathfrak S_m\) acting on
\(X=\{1,\dots,m\}\), and set \(A=R[X]\).
We use the convention that \(\mathfrak S_0\) and \(\mathfrak S_1\) are
trivial groups.

The symmetric coefficient module becomes
\[
\SymBich(A)
\cong
\Ind_{\mathfrak S_{m-1}}^{\mathfrak S_m}R
\oplus
\Ind_{\mathfrak S_2\times \mathfrak S_{m-2}}^{\mathfrak S_m}R,
\]
and hence
\[
H^2(\mathfrak S_m,\SymBich(A))
\cong
H^2(\mathfrak S_{m-1},R)
\oplus
H^2(\mathfrak S_2\times \mathfrak S_{m-2},R).
\]
This is the target of
\(\operatorname{ob}_{\operatorname{bich}}\) in
Theorem~\ref{thm:bich-obstruction}.

Let \(E_{ij}\), \(i\neq j\), denote the alternating matrix determined
by
\[
(E_{ij})_{ij}=1,
\qquad
(E_{ij})_{ji}=-1,
\qquad
E_{ji}=-E_{ij},
\qquad
(E_{ij})_{rs}=0
\quad\text{otherwise}.
\]

Since \(\mathfrak S_m\) acts transitively on unordered pairs, the
stabilizer of \(\{1,2\}\) is
\(\mathfrak S_2\times \mathfrak S_{m-2}\),
and the factor \(\mathfrak S_2\) acts by the sign character on
\(RE_{12}\).
Write \(R_\varepsilon=R_{\varepsilon_{\{1,2\}}}\).
Hence
\[
\AltBich(A)
\cong
\Ind_{\mathfrak S_2\times \mathfrak S_{m-2}}^{\mathfrak S_m}R_\varepsilon.
\]

The symmetric-group action gives a natural crossed homomorphism built
from inversion sets.

\begin{definition}
Let \(d_{\operatorname{inv}}\colon \mathfrak S_m\longrightarrow
\AltBich(A)\) be given on the generators \(s_i=(i,i+1)\), \(1\le i<m\),
by \(d_{\operatorname{inv}}(s_i)=E_{i,i+1}\).
Equivalently,
\[
d_{\operatorname{inv}}(w)
=
\sum_{\substack{a<b\\ w(a)>w(b)}}
E_{w(b),w(a)}
=
\sum_{\substack{r<s\\ w^{-1}(r)>w^{-1}(s)}}E_{rs}.
\]
\end{definition}
The signed inversion-set identity, obtained by tracking the pairs whose
orientation is reversed first by \(v\) and then by \(w\), gives
\[
d_{\operatorname{inv}}(wv)
=
d_{\operatorname{inv}}(w)+w\cdot d_{\operatorname{inv}}(v),
\]
so \(d_{\operatorname{inv}}\in
Z^1(\mathfrak S_m,\AltBich(A))\).

Write \(R[2]=\{r\in R\mid 2r=0\}\).

\begin{proposition}\label{prop:symmetric-alt-cohomology}
The Shapiro isomorphism gives
\[
H^1(\mathfrak S_m,\AltBich(A))
\cong
H^1(\mathfrak S_2\times \mathfrak S_{m-2},R_\varepsilon)
\cong
\begin{cases}
R/2R, & m=2,3,\\[2mm]
R/2R\oplus R[2], & m\ge 4.
\end{cases}
\]
Under this isomorphism, the class of \(d_{\operatorname{inv}}\)
corresponds to \(1\in R/2R\) and to \(0\) in the \(R[2]\)-summand when
\(m\ge 4\).
\end{proposition}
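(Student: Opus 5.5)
The plan is to apply the finite-index Shapiro isomorphism to \(\AltBich(A)\cong\Ind_{\mathfrak S_2\times\mathfrak S_{m-2}}^{\mathfrak S_m}R_\varepsilon\). This reduces the problem to \(H^1(G,R_\varepsilon)\) with \(G=\mathfrak S_2\times\mathfrak S_{m-2}\); here \(\mathfrak S_2=\langle t\rangle\) acts by \(-1\) and \(\mathfrak S_{m-2}\) acts trivially. I will compute \(H^1(G,R_\varepsilon)\) directly as crossed homomorphisms modulo principal ones. A crossed homomorphism \(f\) is determined by \(u=f(t)\) and the restriction \(\varphi=f|_{\mathfrak S_{m-2}}\). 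Since \(\mathfrak S_{m-2}\) acts trivially, \(\varphi\) is a homomorphism \(\mathfrak S_{m-2}\to R\).

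The relations to impose are the following. The relation \(t^2=1\) gives \(u+t\cdot u=u-u=0\), so it imposes no condition. Commutation \(t\sigma=\sigma t\) gives \(u+t\cdot\varphi(\sigma)=\varphi(\sigma)+\sigma\cdot u\), that is, \(u-\varphi(\sigma)=\varphi(\sigma)+u\), so \(2\varphi(\sigma)=0\). Hence \(Z^1\cong R\oplus\Hom(\mathfrak S_{m-2},R[2])\). Principal cocycles are \(g\mapsto g\cdot r-r\); they vanish on \(\mathfrak S_{m-2}\) and send \(t\mapsto -2r\). So \(H^1\cong R/2R\oplus\Hom(\mathfrak S_{m-2}^{ab},R[2])\). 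For \(m=2,3\), \(\mathfrak S_{m-2}\) is trivial. For \(m\ge4\), \(\mathfrak S_{m-2}^{ab}=\Z/2\), so \(\Hom(\Z/2,R[2])=R[2]\). This gives the stated groups. The splitting of \(Z^1\) as a product uses that a cocycle on a direct product is determined by its restrictions subject to the commutation relation. This is routine because \(G\) has the presentation of a direct product, with the restriction on \(\mathfrak S_{m-2}\) being any homomorphism.

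For the class of \(d_{\operatorname{inv}}\), I need the explicit Shapiro map on \(H^1\). It is restriction to \(G\) followed by projection \(\Ind_G^{\mathfrak S_m}R_\varepsilon\to R_\varepsilon\) onto the \(E_{12}\)-coordinate, i.e. \(B\mapsto B_{12}\). This projection is \(G\)-equivariant: \(t=s_1\) sends \(E_{12}\) to \(E_{21}=-E_{12}\), and \(\mathfrak S_{m-2}\) on \(\{3,\dots,m\}\) fixes \(E_{12}\). I take this description of the Shapiro map as standard for coinduced modules. Then \(u=(d_{\operatorname{inv}}(s_1))_{12}=(E_{12})_{12}=1\), giving \(1\in R/2R\). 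For \(\sigma\in\mathfrak S_{m-2}\), viewed inside \(\mathfrak S_m\) and fixing \(1,2\), the formula \(d_{\operatorname{inv}}(\sigma)=\sum_{r<s,\ \sigma^{-1}(r)>\sigma^{-1}(s)}E_{rs}\) involves only pairs inside \(\{3,\dots,m\}\). Its \((1,2)\)-entry is therefore \(0\), so \(\varphi=0\) and the \(R[2]\)-coordinate vanishes.

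The main obstacle is sign and convention bookkeeping in the Shapiro isomorphism: confirming that the projection to the \(E_{12}\)-coordinate is the correct, equivariant evaluation, so that the coordinate \(1\) rather than \(-1\) is obtained. In \(R/2R\) these coincide anyway, and the \(R[2]\) component is \(0\) in any convention. I would also check that the identification \(H^1(G,R_\varepsilon)\cong R/2R\oplus R[2]\) is canonical enough to state the coordinates; it is, since both coordinates come from evaluation at \(t\) and at a transposition of \(\mathfrak S_{m-2}\).
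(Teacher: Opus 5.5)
Your proposal is correct and follows the paper's proof essentially step by step. Crossed homomorphisms on \(\mathfrak S_2\times\mathfrak S_{m-2}\) are parametrized by the value at \(t\) together with a homomorphism \(\mathfrak S_{m-2}\to R[2]\), forced by the commutation relation; principal ones shift the \(t\)-value by \(2R\); and the class of \(d_{\operatorname{inv}}\) is read off through the evaluation-at-\((1,2)\) form of the Shapiro map. The only cosmetic difference is in the converse step: the paper writes the cocycle explicitly as \(c(h)=\phi(h)\), \(c(th)=a-\phi(h)\), whereas you appeal to the direct-product presentation of the stabilizer.
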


\begin{proof}
Let \(H=\mathfrak S_2\times \mathfrak S_{m-2}\) be the stabilizer of
\(\{1,2\}\), and let \(t\) be the nontrivial
element of the first factor. The first factor acts on \(R_\varepsilon\)
by \(-1\), while \(\mathfrak S_{m-2}\) acts trivially.

A crossed homomorphism \(c\) from \(H\) to \(R_\varepsilon\) is determined by
\(a=c(t)\) and by its restriction \(\phi\) to \(\mathfrak S_{m-2}\). The
restriction \(\phi\) is a homomorphism into \(R\), because
\(\mathfrak S_{m-2}\) acts trivially. The commutation relation between
\(t\) and \(\mathfrak S_{m-2}\) imposes \(2\phi(h)=0\) for every
\(h\in \mathfrak S_{m-2}\), so
\(\phi\in\Hom(\mathfrak S_{m-2}^{\operatorname{ab}},R[2])\), where
\({\operatorname{ab}}\) denotes abelianization.
Conversely, given \(a\in R\) and such a homomorphism \(\phi\), the
formulas \(c(h)=\phi(h)\) and \(c(th)=a-\phi(h)\) define a crossed
homomorphism \(H\to R_\varepsilon\).
Principal crossed homomorphisms replace \(a\) by \(a-2u\) and do not
change \(\phi\). Therefore
\[
H^1(H,R_\varepsilon)
\cong
R/2R
\oplus
\Hom(\mathfrak S_{m-2}^{\operatorname{ab}},R[2]).
\]
Since \(\mathfrak S_r^{\operatorname{ab}}=0\) for \(r=0,1\) and
\(\mathfrak S_r^{\operatorname{ab}}\cong \Z/2\Z\) for \(r\ge 2\),
the Hom group is zero for \(m=2,3\) and isomorphic to \(R[2]\) for
\(m\ge4\).

For \(d_{\operatorname{inv}}\), using the evaluation-at-\((1,2)\)
realization of the Shapiro map, the restriction to the stabilizer has
coefficient \(1\) on the oriented pair \((1,2)\) when evaluated at
\(t=(1,2)\), and it has zero coefficient on \((1,2)\) for elements of
the \(\mathfrak S_{m-2}\)-factor. Hence its \(R/2R\)-coordinate is
\(1\); when \(m\geq4\), its \(R[2]\)-coordinate is zero.
\end{proof}

For \(R=\Z/n\Z\), Proposition~\ref{prop:symmetric-alt-cohomology}
shows that \(H^1(\mathfrak S_m,\AltBich(A))\) vanishes when \(n\) is
odd. When \(n\) is even, this group is \(\Z/2\Z\) for \(m=2,3\) and
\((\Z/2\Z)^2\) for \(m\ge4\); the class of
\(d_{\operatorname{inv}}\) generates the first factor.

Assume that \(n\) is odd. Let
\[
\Omega=\sum_{i<j}E_{ij},
\qquad
\eta=-2^{-1}\Omega.
\]

For an unordered pair \(\{r,s\}\), \(r<s\), the coefficient of
\(E_{rs}\) in \(w\cdot\Omega-\Omega\) is zero unless
\(\{r,s\}=\{w(a),w(b)\}\) for an inversion \(a<b\), \(w(a)>w(b)\). In
that case the contribution is \(-2E_{w(b),w(a)}\). Hence
\[
w\cdot\Omega-\Omega
=
-2\,d_{\operatorname{inv}}(w).
\]
Therefore
\[
d_{\operatorname{inv}}(w)
=
w\cdot\eta-\eta,
\]
and \(d_{\operatorname{inv}}\) is principal. Thus, for odd \(n\),
\(d_{\operatorname{inv}}\) has trivial class in
\(H^1(\mathfrak S_m,\AltBich(A))\). For \(n>1\), however, it is a
nonzero element of \(Z^1(\mathfrak S_m,\AltBich(A))\). The equivalence
defining \(\Ecal(A,\mathfrak S_m)\) in~\eqref{eq:E} retains this
\(Z^1\)-datum even when its \(H^1\)-class vanishes.

Proposition~\ref{prop:inversion-lift} now determines whether this
crossed homomorphism, rather than merely its cohomology class, admits a
bicharacter lift.

\begin{proposition}\label{prop:inversion-lift}
\(d_{\operatorname{inv}}\) admits a bicharacter lift if and only if
\(n\) is odd.
\end{proposition}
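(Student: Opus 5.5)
The odd case follows at once from Corollary~\ref{cor:odd-exponent}: when \(n\) is odd, \(A=R^m\) has odd exponent \(n\), and the half-alternating family \(\widehat d_x=d_{\operatorname{inv}}(x)^h\) is a bicharacter lift. Concretely, it is the principal crossed homomorphism \(w\mapsto w\cdot s(\eta)-s(\eta)\) with \(s(\eta)=-4^{-1}\Omega\) viewed in \(\Bich(A)\). So the real content is the even case, where we must show that no \(b\in Z^1(\mathfrak S_m,\Bich(A))\) satisfies \(\Alt(b_w)=d_{\operatorname{inv}}(w)\).

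For \(n\) even, the plan is to reduce to a computation on the single generator \(s_1=(1,2)\). By Theorem~\ref{thm:bich-obstruction} it suffices to exhibit an obstruction, but a direct argument is cleaner. Suppose \(b\) is a bicharacter lift, and write \(B=b_{s_1}\in R^{X\times X}\) as a matrix. Since \(s_1^2=1\), the crossed identity forces \(B+s_1\cdot B=0\). Here \((s_1\cdot B)_{xy}=B_{s_1x,s_1y}\), additively. Looking at the entries \((1,2)\) and \((2,1)\) gives \(B_{12}+B_{21}=0\). The lifting condition gives \(\Alt(B)=E_{12}\), that is, \(B_{12}-B_{21}=1\). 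Adding the two equations yields \(2B_{12}=1\) in \(R=\Z/n\Z\), which is impossible when \(n\) is even. This proves the ``only if'' direction, and it even holds for \(m=2\).

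It remains to check the bookkeeping in this step. First, \(s_1\) swaps the indices \(1,2\), so \((s_1\cdot B)_{12}=B_{21}\) and \((s_1\cdot B)_{21}=B_{12}\). The relation \(0=b_{s_1^2}=b_{s_1}+s_1\cdot b_{s_1}\) then gives, at entry \((1,2)\), exactly \(B_{12}+B_{21}=0\). Second, under the matrix identification \(\Alt\) is \(B\mapsto B-B^{T}\), so \(\Alt(B)=E_{12}\) means \((B-B^T)_{12}=B_{12}-B_{21}=1\). The main obstacle is only to keep the conventions consistent between the multiplicative and additive notation and the left-action formula. In Shapiro terms this is the statement that \(\beta_P\) is nonzero on the class \(1\in R/2R\), which matches Remark~\ref{rem:perm-bich-obstruction}.
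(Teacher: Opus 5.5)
Your proof is correct and is the paper's argument. The odd case comes from the half-alternating lift of Corollary~\ref{cor:odd-exponent}, and your identification of it as the principal cocycle of \(-4^{-1}\Omega\) is a correct extra detail. For even \(n\), you combine the involutivity relation \(B+s\cdot B=0\) with \(\Alt(B)=E_{i,i+1}\) at the entry \((i,i+1)\) to force \(2B_{i,i+1}=1\) in \(\Z/n\Z\); the paper does this for an arbitrary adjacent transposition \(s_i\), while you take \(s_1\), which is the same computation.
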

\begin{proof}
If \(n\) is odd, then \(2\) is invertible in \(R=\Z/n\Z\). The
canonical half-alternating construction of
Corollary~\ref{cor:odd-exponent} therefore lifts every alternating
bicharacter, and in particular lifts \(d_{\operatorname{inv}}\).

Conversely, suppose that a bicharacter lift \(\widetilde d\) of
\(d_{\operatorname{inv}}\) exists. For an adjacent transposition \(s_i\),
write \(B_i=\widetilde d(s_i)\). The involutivity relation gives
\(B_i+s_i\cdot B_i=0\). Evaluating at \((v_i,v_{i+1})\) yields
\(B_i(v_i,v_{i+1})+B_i(v_{i+1},v_i)=0\). On the other hand,
\(\Alt(B_i)=E_{i,i+1}\) gives
\(B_i(v_i,v_{i+1})-B_i(v_{i+1},v_i)=1\).
Adding these equations gives
\[
2B_i(v_i,v_{i+1})=1
\]
in \(\Z/n\Z\). Hence \(2\) has a multiplicative inverse in \(\Z/n\Z\),
which happens precisely when \(n\) is odd. Therefore no bicharacter lift
of \(d_{\operatorname{inv}}\) exists when \(n\) is even.
\end{proof}

\begin{proposition}\label{prop:inversion-strict-lift}
For every \(m\ge2\) and \(n\ge2\), the cohomological datum
\(\vartheta_*d_{\operatorname{inv}}\) admits a strict lift.
Consequently, if \(n\) is even, then
\[
\operatorname{ob}_{\operatorname{bich}}(d_{\operatorname{inv}})\ne0,
\qquad
H^2(\mathfrak S_m,\iota)
\bigl(\operatorname{ob}_{\operatorname{bich}}
(d_{\operatorname{inv}})\bigr)
=\delta(\vartheta_*d_{\operatorname{inv}})=0.
\]
\end{proposition}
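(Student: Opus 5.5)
The consequence part follows formally from results already in place, so the substance is the existence of a strict lift. Once some \(\tau\in Z^1(\mathfrak S_m,Z^2(A,\kk^\times))\) with \([\tau_w]=\vartheta(d_{\operatorname{inv}}(w))\) is constructed, Theorem~\ref{thm:general-obstruction} gives \(\delta(\vartheta_*d_{\operatorname{inv}})=0\). Equation~\eqref{eq:bich-strict-comparison} then identifies this with \(H^2(\mathfrak S_m,\iota)(\operatorname{ob}_{\operatorname{bich}}(d_{\operatorname{inv}}))\). For even \(n\), Proposition~\ref{prop:inversion-lift} says \(d_{\operatorname{inv}}\) has no bicharacter lift, so Theorem~\ref{thm:bich-obstruction} gives \(\operatorname{ob}_{\operatorname{bich}}(d_{\operatorname{inv}})\neq0\).

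\emph{First candidate.} I would start from the upper-triangular lift. For \(w\in\mathfrak S_m\) let \(N(w)\) be the bicharacter matrix with entry \(1\) at \((r,s)\) for every pair \(r<s\) with \(w^{-1}(r)>w^{-1}(s)\), and \(0\) elsewhere. Then \(\Alt(b_{N(w)})=d_{\operatorname{inv}}(w)\).

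\emph{Its defect.} I would compute \(S(w,v)=N(w)+w\cdot N(v)-N(wv)\) by inversion-set bookkeeping, as in the signed inversion-set identity. The pairs inverted by \(v\) and not re-reversed by \(w\) cancel. The pairs reversed by both \(v\) and \(w\) drop out of \(N(wv)\), but survive in \(w\cdot N(v)\) below the diagonal and in \(N(w)\) above it. Hence \(S(w,v)\) is the symmetric, zero-diagonal matrix supported on exactly those doubly reversed pairs.

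\emph{Quadratic correction.} A zero-diagonal symmetric bicharacter \(C\) satisfies \(b_C=\partial g_C\) with \(g_C(a)=q^{\sum_{r<s}C_{rs}a_ra_s}\). This is well defined modulo \(n\) and needs no division by \(2\), and \(C\mapsto g_C\) is a \(\Gamma\)-equivariant homomorphism. So the defect is \(\partial\) of the \(C^1\)-valued \(2\)-cochain \(G(w,v)=g_{S(w,v)}\). Because \(g\) is equivariant and injective on zero-diagonal symmetric matrices, \(G\) is a \(2\)-cocycle, since \(S\) itself is a \(2\)-cocycle. It then suffices to show that the class of \(G\) in \(H^2(\mathfrak S_m,B^2(A,\kk^\times))\) vanishes.

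The natural approach is to write \(G=\partial_\Gamma h\) for a \(1\)-cochain \(h_w\) with values in \(C^1(A,\kk^\times)\) and set \(\tau_w=b_{N(w)}\,\partial h_w^{-1}\). The crossed identity for \(\tau\) is then exactly \(\partial_\Gamma h=G\) modulo characters.

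\emph{Main obstacle.} The correction \(h_w\) cannot be quadratic. If it were, \(\partial h_w\) would be a symmetric bicharacter, \(\tau\) would be a bicharacter lift, and Proposition~\ref{prop:inversion-lift} forbids this for even \(n\). So \(h_w\) must involve carries. The first thing I would try is to fix a primitive \(2n\)-th root \(\zeta\) with \(\zeta^2=q\), let \(\tilde a\in\{0,\dots,n-1\}^m\) denote the standard lift of \(a\), and take \(h_w(a)\) to be \(\zeta\) raised to a signed sum of \(\tilde a_r\tilde a_s\) over the pairs inverted by \(w^{-1}\). The point is that \(\zeta^{2\tilde a_r\tilde a_s}=q^{a_ra_s}\), while the odd parts encode a genuine non-bilinear \(2\)-cocycle.

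Two checks are needed: that the \(\zeta\)-dependent carry terms cancel in \(\partial_\Gamma h\), so that \(\partial_\Gamma h=G\) up to characters, and that each \(\partial h_w\) remains a normalized coboundary. If this bookkeeping becomes unwieldy, the fallback is to verify the crossed identity only on the Coxeter generators \(s_i\) and relations, realizing \(\tau_{s_i}\) from the adjacent twists \(c_i\) of the generalized Kac--Paljutkin construction, and then to extend multiplicatively.
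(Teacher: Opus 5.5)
\textbf{The gap.} Your reduction of the ``consequently'' clause is exactly the paper's. Your candidate correction is also the right one. With a uniform sign, $h_w$ is the function $f_w(a)=\zeta^{\tilde a^{\mathsf T}N(w)\tilde a}$ (or its inverse) from the proof of Proposition~\ref{prop:Hnm-extension-class}. The resulting $\tau_w=b_{N(w)}\,\partial f_w$ is the strict lift the paper constructs.

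As written, however, the step that carries all the content is only announced as a ``check'' with a fallback. It is also set up with a sign error. Expanding $g_C(\alpha)g_C(\beta)g_C(\alpha+\beta)^{-1}$ gives $q^{-\alpha^{\mathsf T}C\beta}$, so $\partial g_C=b_C^{-1}$, not $b_C$. Hence for $\tau_w=b_{N(w)}\,\partial h_w^{-1}$ the defect is $\Omega_\tau(w,v)=\partial\bigl(G(w,v)\,(\partial_\Gamma h)(w,v)\bigr)^{-1}$, and you need $\partial_\Gamma h=G^{-1}$. So your ``signed sum'' must have every sign equal to $-1$. With every sign $+1$ the defect is $b_{S(w,v)}^{2}$, which is nontrivial for $n\ge3$.

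\textbf{How to close it.} The point you are missing is that $\partial_\Gamma h$ involves no carries at all. Because one lift is used in every coordinate, $\widetilde{w^{-1}\cdot a}=w^{-1}\cdot\tilde a$. So $(w\cdot h_v)(a)=h_v(w^{-1}\cdot a)$ has exponent $\epsilon\,\tilde a^{\mathsf T}(w\cdot N(v))\tilde a$, and no addition in $A$ occurs. For $h_w(a)=\zeta^{\epsilon\,\tilde a^{\mathsf T}N(w)\tilde a}$ this gives
\[
(\partial_\Gamma h)(w,v)(a)
=\zeta^{\epsilon\,\tilde a^{\mathsf T}S(w,v)\tilde a}
=q^{\epsilon\sum_{r<s}S(w,v)_{rs}a_ra_s}
=G(w,v)(a)^{\epsilon}.
\]
This holds exactly, not merely modulo characters. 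The middle equality uses only that $S(w,v)$ is an \emph{integral} symmetric $0/1$ matrix with zero diagonal, which your own inversion-set computation already establishes. Hence the exponent of $\zeta$ is even.

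Carries appear only in $\partial h_w$, where addition in $A$ enters, and they are harmless there. Since $h_w(0)=1$, $\partial h_w$ is a normalized coboundary, so $[\tau_w]=[b_{N(w)}]=\vartheta(d_{\operatorname{inv}}(w))$. Those carries are exactly where the non-bilinearity of the lift lives.

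\textbf{Comparison with the paper.} Completed this way, your argument is organized differently from the paper's proof of this proposition, which is your fallback. The paper defines only $\tau_{s_i}=c_i\,\partial f_i$ and checks $\tau_i(s_i\cdot\tau_i)=1$ via $\partial(f_i^2)=\sigma_i^{-1}$. It then checks the commutation and braid relations in $Z^2(A,\kk^\times)\rtimes\mathfrak S_m$ and extends through the Coxeter presentation.

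Your global route gives a closed formula for every $\tau_w$ and avoids the presentation and braid bookkeeping. It is in fact the computation $\partial_\Gamma f=\sigma^{d_{\operatorname{inv}}}$ that the paper carries out later for $[H_{n,m}]$. The generator route keeps each verification local. However, it needs agreement on generators to identify $[\tau_w]$ with $\vartheta_*d_{\operatorname{inv}}$.
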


\begin{proof}
Fix a normalized set-theoretic section
\(R=\Z/n\Z\to\Z\), used in every coordinate, and denote the chosen
representative of \(r\in R\) by \(\bar r\). Choose
\(\zeta\in\kk^\times\) with \(\zeta^2=q\). For
\(a,b\in A=R^m\) and \(1\le i<m\), define, with exponents of \(q\)
read in \(R\),
\[
c_i(a,b)=q^{a_i b_{i+1}},
\qquad
f_i(a)=\zeta^{\bar a_i\bar a_{i+1}},
\qquad
\tau_i=c_i\,\partial f_i.
\]
The function \(c_i\) is a normalized bicharacter and \(\partial f_i\)
is a normalized \(2\)-coboundary. Hence \(\tau_i\) is a normalized
\(2\)-cocycle. Moreover,
\[
\Alt(c_i)(a,b)=q^{a_i b_{i+1}-a_{i+1}b_i},
\qquad
[\tau_i]=[c_i]
=\vartheta(E_{i,i+1})
=\vartheta(d_{\operatorname{inv}}(s_i)).
\]

Put
\(\sigma_i(a,b)=
q^{a_i b_{i+1}+a_{i+1}b_i}\).
Since the same section is used in every coordinate,
\[
c_i(s_i\cdot c_i)=\sigma_i,
\qquad
s_i\cdot f_i=f_i,
\qquad
\partial(f_i^2)=\sigma_i^{-1}.
\]
In the identity \(\partial(f_i^2)=\sigma_i^{-1}\), carries in the chosen
integer representatives contribute only multiples of \(n\) to the
exponent of \(q\). It follows that
\(\tau_i(s_i\cdot\tau_i)=1\).

For the remaining Coxeter relations, write
\(c_{rs}(a,b)=q^{a_r b_s}\) and
\(f_{rs}(a)=\zeta^{\bar a_r\bar a_s}=f_{sr}(a)\). Then
\(w\cdot c_{rs}=c_{w(r),w(s)}\) and
\(w\cdot f_{rs}=f_{w(r),w(s)}\). The commutation relations for
\(|i-j|>1\) follow immediately. In the braid relation, the two sides
of
\[
\tau_i(s_i\cdot\tau_{i+1})(s_is_{i+1}\cdot\tau_i)
=
\tau_{i+1}(s_{i+1}\cdot\tau_i)
(s_{i+1}s_i\cdot\tau_{i+1})
\]
have the same bicharacter factor
\(c_{i,i+1}c_{i,i+2}c_{i+1,i+2}\) and the same coboundary factor
\(\partial(f_{i,i+1}f_{i,i+2}f_{i+1,i+2})\).

The multiplication in
\(Z^2(A,\kk^\times)\rtimes\mathfrak S_m\) is
\((\omega,w)(\nu,v)=(\omega(w\cdot\nu),wv)\). Thus
\(s_i\mapsto(\tau_i,s_i)\) respects the Coxeter presentation. Writing the resulting
homomorphism as \(w\mapsto(\tau_w,w)\) gives
\(\tau_{wv}=\tau_w(w\cdot\tau_v)\). The crossed homomorphism
\(w\mapsto[\tau_w]\) agrees with
\(\vartheta_*d_{\operatorname{inv}}\) on every generator and hence
equals it. This proves the strict-lift assertion. For even \(n\), the
nonvanishing assertion follows from Proposition~\ref{prop:inversion-lift}
and Theorem~\ref{thm:bich-obstruction}; the vanishing assertion follows
from the constructed strict lift and
Theorem~\ref{thm:general-obstruction}. Their comparison is
\eqref{eq:bich-strict-comparison}.
\end{proof}

\begin{example}
Assume that \(n\) is odd, let \(\Gamma=\mathfrak S_2=\langle s\rangle\),
set \(R=\Z/n\Z\), and take \(A=R v_1\oplus R v_2\). Let \(s\)
interchange \(v_1\) and \(v_2\). Let \(q\in\kk^\times\)
be a primitive \(n\)-th root of unity and let \(h=2^{-1}\in R\).
The canonical lift of \(d_{\operatorname{inv}}\) is
\[
b_1=1,
\qquad
b_s(\alpha,\beta)
=
q^{h(\alpha_1\beta_2-\alpha_2\beta_1)},
\]
where \(\alpha=\alpha_1v_1+\alpha_2v_2\) and
\(\beta=\beta_1v_1+\beta_2v_2\). Write \(e_a\) for the idempotent of
\(\kk^A\) corresponding to \(a=(a_1,a_2)\in R^2\). Then
\(H(\mathfrak S_2,A,b)\) has dimension \(2n^2\), multiplication
\[
(e_a u_s)(e_c u_s)
=
\delta_{a,s\cdot c}\,e_a,
\]
and coproduct on the nontrivial homogeneous component
\[
\Delta(e_a u_s)
=
\sum_{\alpha+\beta=a}
q^{h(\alpha_1\beta_2-\alpha_2\beta_1)}
(e_\alpha u_s)\otimes(e_\beta u_s).
\]
The component over \(1\in \mathfrak S_2\) has the usual function-algebra
coproduct.
\end{example}

\subsection{The full extension group for permutation modules}
\label{sec:permutation-full-opext}

The group \(\Ecal(A,\Gamma)\) in~\eqref{eq:E} records the
algebra-split part of the extension problem. To compare it with all
cocentral abelian extensions for the fixed action and trivial
coaction, write
\[
\Opext_\Gamma(A)
=
\Opext(\kk\Gamma,\kk^A;\mathord{\cdot},\rho_{\mathrm{triv}}).
\]
The normalized total complex for abelian extensions
specializes in this setting as follows; compare
\cite[Theorem~4.2]{GalindoMorales2019} and
\cite{Schauenburg2002Kac,GrunenfelderMastnak2004}.
No finiteness assumption on \(\Gamma\) is needed in this subsection.
A total \(2\)-cocycle is represented by
\[
\sigma\in C^2\bigl(\Gamma,C^1(A,\kk^\times)\bigr),
\qquad
\tau\in C^1\bigl(\Gamma,Z^2(A,\kk^\times)\bigr).
\]
If
\[
\Omega_\tau(g,h)
=
\tau_g(g\cdot\tau_h)\tau_{gh}^{-1},
\]
the total-cocycle identities are
\begin{equation}\label{eq:full-total-cocycle}
\partial_\Gamma\sigma=1,
\qquad
\partial\sigma_{g,h}=\Omega_\tau(g,h)^{-1}.
\end{equation}
For a normalized
\(f\in C^1(\Gamma,C^1(A,\kk^\times))\), the total-coboundary
equivalence is
\begin{equation}\label{eq:full-total-equivalence}
(\sigma,\tau)
\sim
\bigl(\sigma\,\partial_\Gamma f,
      \tau\,(\partial f)^{-1}\bigr).
\end{equation}

Set \(A^\vee=\Hom(A,\kk^\times)\), and let
\[
\partial_{\mathrm{char}}:
H^2\bigl(\Gamma,B^2(A,\kk^\times)\bigr)
\longrightarrow
H^3(\Gamma,A^\vee)
\]
be the connecting map associated with
\[
1\longrightarrow A^\vee
\longrightarrow C^1(A,\kk^\times)
\xrightarrow{\partial}B^2(A,\kk^\times)
\longrightarrow1.
\]
For a total cocycle \((\sigma,\tau)\), consider the assignments
\[
\begin{aligned}
\operatorname{cl}(\sigma,\tau)(g)
&=[\tau_g]\in H^2(A,\kk^\times),\\
\operatorname{mult}(\sigma,\tau)
&=[\sigma]\in H^2\bigl(\Gamma,C^1(A,\kk^\times)\bigr).
\end{aligned}
\]

\begin{proposition}\label{prop:full-kac-comparison}
These assignments descend to extension classes. Moreover, there is a
low-degree exact sequence
\begin{equation}\label{eq:kac-low-degree}
0
\longrightarrow
H^2(\Gamma,A^\vee)
\longrightarrow
\Opext_\Gamma(A)
\xrightarrow{\operatorname{cl}}
Z^1\bigl(\Gamma,H^2(A,\kk^\times)\bigr)
\xrightarrow{d_2}
H^3(\Gamma,A^\vee).
\end{equation}
The map \(H^2(\Gamma,A^\vee)\to\Opext_\Gamma(A)\) sends \([\chi]\) to
\([(\chi,1)]\), and
\(\Ecal(A,\Gamma)\cong\ker(\operatorname{mult})\).
For every \(d\in Z^1(\Gamma,H^2(A,\kk^\times))\), the strict obstruction of
Theorem~\ref{thm:general-obstruction} satisfies
\begin{equation}\label{eq:kac-strict-comparison}
d_2(d)
=
\partial_{\mathrm{char}}(\delta(d)).
\end{equation}
\end{proposition}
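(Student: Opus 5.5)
I will work entirely with the explicit normalized total complex given by \eqref{eq:full-total-cocycle} and \eqref{eq:full-total-equivalence}. That is, I will take \(\Opext_\Gamma(A)\) to be the group of total \(2\)-cocycles \((\sigma,\tau)\) modulo total coboundaries. Group multiplication is pointwise.

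\emph{Descent of the invariants.} A total coboundary changes \(\tau_g\) by \((\partial f_g)^{-1}\in B^2(A,\kk^\times)\), so \([\tau_g]\) is unchanged. Since \(\Omega_\tau(g,h)=(\partial\sigma_{g,h})^{-1}\) is a coboundary, \(g\mapsto[\tau_g]\) is crossed. Hence \(\operatorname{cl}\) is a well-defined homomorphism into \(Z^1(\Gamma,H^2(A,\kk^\times))\). Likewise \(\partial_\Gamma\sigma=1\), and a coboundary changes \(\sigma\) by \(\partial_\Gamma f\). So \(\operatorname{mult}\) descends to \(H^2(\Gamma,C^1(A,\kk^\times))\).

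\emph{The identification \(\Ecal(A,\Gamma)\cong\ker(\operatorname{mult})\).} I will send a strict datum \(\tau\) to \((1,\tau)\). It is a total cocycle because \(\Omega_\tau=1\). Equivalence of strict data, that is \(f\in Z^1(\Gamma,C^1)\), is exactly a total coboundary with \(\partial_\Gamma f=1\). Replacing \(f\) by \(f^{-1}\) only fixes the sign convention. This gives a well-defined injective map into \(\ker(\operatorname{mult})\):
\begin{itemize}
\item injectivity: if \((1,\tau)\sim(1,\tau')\) via some \(f\), then \(\partial_\Gamma f=1\), so \(f\in Z^1(\Gamma,C^1)\);
\item surjectivity: if \(\sigma=\partial_\Gamma f\), then multiplying by the coboundary of \(f^{-1}\) yields a representative \((1,\tau')\), and the cocycle identity then forces \(\Omega_{\tau'}=1\).
\end{itemize}

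\emph{Exactness of \eqref{eq:kac-low-degree}.} I will check each position in turn.
\begin{itemize}
\item The map \([\chi]\mapsto[(\chi,1)]\) is well defined: \(\chi\) takes values in \(A^\vee=\ker\partial\), so \(\partial\chi=1=\Omega_1\). A coboundary \(\partial_\Gamma\psi\) with \(\psi\) valued in \(A^\vee\) is a total coboundary, because \(\partial\psi=1\).
\item Injectivity: suppose \((\chi,1)=(\partial_\Gamma f,(\partial f)^{-1})\). Then \(\partial f_g=1\), so \(f\) takes values in \(A^\vee\) and \([\chi]=0\).
\item Exactness at \(\Opext_\Gamma(A)\): suppose \(\operatorname{cl}=1\). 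Then every \(\tau_g\) lies in \(B^2\), say \(\tau_g=\partial f_g\) with normalized \(f\). Applying the total coboundary of \(f\) gives a representative \((\sigma',1)\), and \eqref{eq:full-total-cocycle} forces \(\partial\sigma'=1\). So \(\sigma'\) is valued in \(A^\vee\) and lies in the image.
\item The map \(d_2\): for \(d\), choose representatives \(\widetilde\tau\) as in Theorem~\ref{thm:general-obstruction}. Then \(\Omega_d(g,h)\in B^2\), so choose \(\sigma\) with \(\partial\sigma_{g,h}=\Omega_d(g,h)^{-1}\). The second identity of \eqref{eq:full-total-cocycle} then holds, and \(\partial_\Gamma\sigma\) is valued in \(\ker\partial=A^\vee\). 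It is a \(3\)-cocycle there, and I define \(d_2(d)=[\partial_\Gamma\sigma]^{-1}\) (sign convention to be fixed).
\item Well-definedness and \eqref{eq:kac-strict-comparison}: by the recipe for the connecting map \(\partial_{\mathrm{char}}\), lift the \(2\)-cocycle \(\Omega_d\) through \(\partial\) and take the \(\Gamma\)-coboundary. This is literally \(\partial_{\mathrm{char}}[\Omega_d]=\partial_{\mathrm{char}}\delta(d)\). Consequently \(d_2\) is well defined, being a composite of well-defined maps, and the comparison holds up to the fixed sign.
\item Exactness at \(Z^1\): \(d\) lies in the image of \(\operatorname{cl}\) iff some choice of \(\widetilde\tau\) and \(\sigma\) has \(\partial_\Gamma\sigma=1\). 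Changing \(\sigma\) by an \(A^\vee\)-valued \(2\)-cochain \(\chi\) changes \(\partial_\Gamma\sigma\) by \(\partial_\Gamma\chi\). So this is possible iff \([\partial_\Gamma\sigma]=0\), i.e. iff \(d_2(d)=0\). This uses the fact that changing \(\widetilde\tau\) by elements of \(B^2\) changes \(\Omega_d\) only by \(\Gamma\)-coboundaries, which are absorbed into \(\sigma\).
\end{itemize}

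The main obstacle I expect is bookkeeping of signs and inverse conventions. These appear in \eqref{eq:full-total-equivalence}, in the definition of \(\partial_{\mathrm{char}}\), and in checking that \(d_2\) is a homomorphism independent of all choices. I will handle them by fixing the normalization \(d_2=\partial_{\mathrm{char}}\circ\delta\) at the outset and verifying everything else against it. A secondary check is that normalized cochains remain normalized throughout, so that \(\chi\) and the lifts \(f\) can always be chosen normalized.
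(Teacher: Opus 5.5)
Your proposal is correct and follows the paper's proof essentially step by step: you work in the explicit normalized total complex, kill \(\tau\) (respectively \(\sigma\)) by a total coboundary to get exactness at \(\Opext_\Gamma(A)\) and the identification \(\Ecal(A,\Gamma)\cong\ker(\operatorname{mult})\), and define \(d_2\) through the horizontal defect of a corrected pair, with your convention \([\partial_\Gamma\sigma]^{-1}\) agreeing exactly with the paper's \([\partial_\Gamma a]\) (so the ``up to sign'' hedge can be dropped). The only difference is that you set \(d_2=\partial_{\mathrm{char}}\circ\delta\) by construction, which makes the comparison formula definitional and gives well-definedness and additivity at once; the paper instead identifies this map with the standard transgression via the cited Galindo--Morales formula and checks independence of choices directly.
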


\begin{proof}
The identity \(\partial\sigma_{g,h}=\Omega_\tau(g,h)^{-1}\)
in~\eqref{eq:full-total-cocycle} shows that \(g\mapsto[\tau_g]\) is
crossed, and
\eqref{eq:full-total-equivalence} does not change its values in
cohomology. If these values are trivial, a total coboundary makes
\(\tau=1\); the remaining \(\sigma\) is then an \(A^\vee\)-valued
\(2\)-cocycle. The remaining equivalences are precisely
\(A^\vee\)-valued \(2\)-coboundaries. This proves injectivity of the map
from \(H^2(\Gamma,A^\vee)\) and exactness at \(\Opext_\Gamma(A)\)
in~\eqref{eq:kac-low-degree}.
The target is \(Z^1\), rather than \(H^1\), because
\eqref{eq:full-total-equivalence} changes each \(\tau_g\) by a
group \(2\)-coboundary on \(A\) but does not identify the resulting crossed
homomorphism \(g\mapsto[\tau_g]\) modulo principal \(\Gamma\)-cocycles.

For
\(d\in Z^1(\Gamma,H^2(A,\kk^\times))\), choose representatives
\(\widetilde\tau_g\) and let
\(\Omega_d(g,h)=\widetilde\tau_g
(g\cdot\widetilde\tau_h)\widetilde\tau_{gh}^{-1}\), as in
Theorem~\ref{thm:general-obstruction}. Choose
\(a\in C^2(\Gamma,C^1(A,\kk^\times))\) with
\(\partial a=\Omega_d\). Then \(\partial_\Gamma a\) is
\(A^\vee\)-valued and represents
\(\partial_{\mathrm{char}}(\delta(d))\). By
\cite[formula~(4-1)]{GalindoMorales2019}, \(d_2(d)\) is represented by
\(\partial_\Gamma a\). The total-cocycle correction is instead
\(a^{-1}\). The pair
\((a^{-1},\widetilde\tau)\) satisfies the mixed identity in
\eqref{eq:full-total-cocycle}, while its horizontal defect is
\((\partial_\Gamma a)^{-1}\), which represents \(d_2(d)^{-1}\). If
\(\widetilde\tau\) is replaced by
\(\widetilde\tau\,\partial f\), one may replace \(a\) by
\(a\,\partial_\Gamma f\), which leaves \(\partial_\Gamma a\)
unchanged. A second choice of \(a\) differs from the first by an
\(A^\vee\)-valued \(2\)-cochain and therefore changes the horizontal
defect by a \(3\)-coboundary. Thus its class is independent of both
choices. Taking product representatives and the product of the
corresponding cochains \(a\) shows that \(d_2\) is a homomorphism. The
horizontal defect can be removed by an \(A^\vee\)-valued
\(2\)-cochain exactly when \(d_2(d)\) is trivial. This proves exactness at
\(Z^1(\Gamma,H^2(A,\kk^\times))\) in~\eqref{eq:kac-low-degree}, as well
as~\eqref{eq:kac-strict-comparison}.

Finally, \(\partial_\Gamma\sigma=1\) makes \([\sigma]\) a
cohomology class, and~\eqref{eq:full-total-equivalence} changes it by
a coboundary. If this class is trivial, the same equivalence makes
\(\sigma=1\), and the mixed identity makes \(\tau\) a strict Hopf
datum. Equivalences between representatives with \(\sigma=1\) have
\(\partial_\Gamma f=1\); after replacing \(f\) by \(f^{-1}\), this
is exactly the relation in Definition~\ref{def:equivalent-strict-data}.
Thus \(\ker(\operatorname{mult})\simeq\Ecal(A,\Gamma)\).
\end{proof}

For \(d\in Z^1(\Gamma,\AltBich(A))\), the two obstruction
comparisons combine to give
\begin{align*}
H^2(\Gamma,\iota)
\bigl(\operatorname{ob}_{\operatorname{bich}}(d)\bigr)
&=\delta(\vartheta_*d),\\
\partial_{\mathrm{char}}\bigl(\delta(\vartheta_*d)\bigr)
&=d_2(\vartheta_*d).
\end{align*}
Thus a bicharacter lift of \(d\), a strict lift of \(\vartheta_*d\), and
a realization of \(\vartheta_*d\) in \(\Opext_\Gamma(A)\) exist precisely
when, respectively,
\(\operatorname{ob}_{\operatorname{bich}}(d)\),
\(\delta(\vartheta_*d)\), and \(d_2(\vartheta_*d)\) vanish.

We retain from Section~\ref{sec:permutation} the notation
\(A=R[X]\) and \(R=\Z/n\Z\). The primitive root \(q\) identifies
alternating bicharacters with alternating matrices over \(R\).

\begin{theorem}\label{thm:permutation-kac-vanishing}
For every finite permutation module \(A=R[X]\), the transgression
\(d_2\circ\vartheta_*\) is trivial. More precisely,
\begin{equation}\label{eq:permutation-split-kac}
0
\longrightarrow
H^2(\Gamma,A^\vee)
\longrightarrow
\Opext_\Gamma(A)
\xrightarrow{\vartheta_*^{-1}\circ\operatorname{cl}}
Z^1\bigl(\Gamma,\AltBich(A)\bigr)
\longrightarrow0
\end{equation}
admits a section that is a group homomorphism. Consequently,
\begin{equation}\label{eq:permutation-full-opext}
\Opext_\Gamma(R[X])
\cong
H^2(\Gamma,R[X])
\oplus
Z^1\bigl(\Gamma,\AltBich(R[X])\bigr),
\end{equation}
where the first summand uses the identification
\(A^\vee\simeq R[X]\) determined by \(q\).
\end{theorem}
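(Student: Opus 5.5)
The plan is to construct an explicit homomorphic section
\(s:Z^1(\Gamma,\AltBich(A))\to\Opext_\Gamma(A)\) of \(\vartheta_*^{-1}\circ\operatorname{cl}\). Surjectivity then holds, \(d_2\circ\vartheta_*=1\) follows from exactness of~\eqref{eq:kac-low-degree}, and the splitting~\eqref{eq:permutation-full-opext} follows once \(A^\vee\simeq R[X]\) via \(q\). Two ingredients are needed.

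The first is an additive, not necessarily equivariant, section \(L\) of \(\Alt\). Using Proposition~\ref{prop:perm-bich-exact}, fix an orientation \((x,y)\) of every pair \(P\in\binom X2\), and define \(L(B)_{xy}=B_{xy}\) on chosen orientations and \(0\) elsewhere (including the diagonal). Then \(L\) is a group homomorphism with \(\Alt\circ L=\id\). For \(d\in Z^1(\Gamma,\AltBich(A))\), set \(b_g=L(d_g)\). Then \(\Omega^{\operatorname{bich}}_d(g,h)=b_g(g\cdot b_h)b_{gh}^{-1}\) lies in \(Z^2(\Gamma,\SymBich(A))\) by Theorem~\ref{thm:bich-obstruction}, and \(d\mapsto\Omega^{\operatorname{bich}}_d\) is additive in \(d\) on the nose, not just in cohomology.

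The second ingredient is the ``quadratic correction'': a \(\Gamma\)-equivariant group homomorphism
\[
\kappa:\SymBich(A)\longrightarrow\Quad(A)\subseteq C^1(A,\kk^\times),\qquad \partial\circ\kappa=\id .
\]
This splits \(1\to A^\vee\to\Quad(A)\to\SymBich(A)\to1\) equivariantly. I would build \(\kappa\) summand by summand on \(\SymBich(A)\cong R[X]\oplus R[\binom X2]\), as in the proof of Proposition~\ref{prop:inversion-strict-lift}.

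Fix integer representatives \(\bar r\) of \(R\), the same in every coordinate, and \(\zeta\) with \(\zeta^2=q\).
\begin{itemize}
\item For an off-diagonal basis element \(P=\{x,y\}\), send it to \(\alpha\mapsto q^{-\bar\alpha_x\bar\alpha_y}\) (sign chosen so that \(\partial\) returns the symmetric bicharacter \(q^{\alpha_x\beta_y+\alpha_y\beta_x}\)). This only needs \(q\), carries contribute multiples of \(n\), and it is visibly permuted by \(\Gamma\).
\item For a diagonal basis element \(x\), send it to the cochain \(\alpha\mapsto\zeta^{\mp\bar\alpha_x^2}\), whose coboundary is \(q^{\alpha_x\beta_x}\) up to the same carry analysis.
\end{itemize}
Since both \(R[X]\) and \(R[\binom X2]\) are permutation modules, it suffices to define \(\kappa\) on basis elements and extend additively. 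The only check is that the image of each basis element has order dividing \(n\) in \(C^1(A,\kk^\times)\), so that the extension over \(R=\Z/n\Z\) is well defined. Equivariance is automatic because one representative section is used in all coordinates.

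With \(\kappa\) in hand, put \(a_d=\kappa\circ\Omega^{\operatorname{bich}}_d\in C^2(\Gamma,C^1(A,\kk^\times))\) and
\[
s(d)=\bigl[(a_d^{-1},\,L\circ d)\bigr].
\]
The total-cocycle identities~\eqref{eq:full-total-cocycle} are checked as follows.
\begin{itemize}
\item For the mixed identity: \(\partial a_d^{-1}=(\Omega^{\operatorname{bich}}_d)^{-1}=\Omega_{L\circ d}^{-1}\).
\item For the horizontal identity: since \(\kappa\) is an equivariant homomorphism, \(\partial_\Gamma a_d=\kappa(\partial_\Gamma\Omega^{\operatorname{bich}}_d)=1\).
\end{itemize}
Thus \(s(d)\in\Opext_\Gamma(A)\), and \(\operatorname{cl}(s(d))=\vartheta_*d\) because \(\Alt(L(d_g))=d_g\). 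Additivity of \(L\) and \(\kappa\) gives \(s(dd')=s(d)s(d')\) already at the level of total cocycles. The exactness statements then follow from Proposition~\ref{prop:full-kac-comparison}.

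The main obstacle is making \(\kappa\) a genuine homomorphism on the diagonal summand \(R[X]\) when \(n\) is even. There the naive cochain \(\zeta^{-\bar\alpha_x^2}\) may have order \(2n\) rather than \(n\), so additivity fails by a character such as \(\alpha\mapsto(-1)^{\bar\alpha_x}\). I would first try the alternative \(\alpha\mapsto\zeta^{-\bar\alpha_x(\bar\alpha_x\pm1)}\) or a similar normalization, adjusting the representative section (for example, representatives in \([0,n)\)) to get exact order \(n\) and the correct coboundary.

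If no equivariant homomorphic splitting exists on this summand, the fallback is a weaker requirement: \(\kappa\) need only be additive and equivariant up to an \(A^\vee\)-valued error. One then shows that the resulting defect \(\partial_\Gamma a_d\) is an \(A^\vee\)-valued coboundary depending additively on \(d\). Since the diagonal summand is \(\bigoplus\Ind_{\Gamma_x}^\Gamma R\), this reduces via Shapiro's lemma to a one-point statement for \(\Gamma_x\) acting trivially, where the splitting of \(A^\vee\to\Quad\to\SymBich\) restricted to one coordinate can be handled directly.
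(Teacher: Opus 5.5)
\textbf{There is a genuine gap, though it is easy to close.} Your overall architecture is the paper's. With orientations induced by an ordering of \(X\), your \(L\) is the paper's upper-triangular \(U\), your off-diagonal \(\kappa\) is the inverse of the paper's \(\mathfrak q_C\), and \(s(d)=[(a_d^{-1},L\circ d)]\) is the paper's \([(\sigma^d,\tau^d)]\).

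As written, however, your argument depends on a homomorphic section \(\kappa\) of \(\partial\colon\Quad(A)\to\SymBich(A)\) on \emph{all} of \(\SymBich(A)\). For even \(n\), no such section exists on the diagonal summand, equivariant or not and whatever normalization you choose. Suppose \(\partial f(\alpha,\beta)=q^{\alpha_x\beta_x}\). Then \(f(kv_x)=f(v_x)^k q^{-k(k-1)/2}\), and \(f(nv_x)=1\) forces \(f(v_x)^n=q^{n(n-1)/2}=-1\). Since \(\partial(f^n)=1\), \(f^n\) is a character with \(f^n(v_x)=-1\). So no preimage of the diagonal basis vector has order dividing \(n\). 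Your renormalizations \(\zeta^{-\bar\alpha_x(\bar\alpha_x\pm1)}\) therefore cannot succeed. The Shapiro-lemma fallback is only a hope, not an argument, so the even-\(n\) case, which is the interesting one, is left unproved.

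The missing observation is that the diagonal summand never enters. Because \((g\cdot B)_{xx}=B_{g^{-1}x,g^{-1}x}\), the \(\Gamma\)-action sends diagonal entries to diagonal entries. Hence \(L(d_g)\), \(g\cdot L(d_h)\) and \(L(d_{gh})\) all have zero diagonal. It follows that \(\Omega^{\operatorname{bich}}_d(g,h)\) is symmetric with zero diagonal, i.e.\ \(\Omega^{\operatorname{bich}}_d\) is a \(2\)-cocycle with values in the \(\Gamma\)-submodule \(R[\binom X2]\subseteq\SymBich(A)\).

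On that submodule your off-diagonal formula already gives an equivariant homomorphic section of \(\partial\). It sends the basis vector of \(P=\{x,y\}\) to \(\alpha\mapsto q^{-\alpha_x\alpha_y}\), and needs neither \(\zeta\) nor integer representatives. Define \(\kappa\) only there. Then the mixed identity, \(\partial_\Gamma a_d=1\), \(\operatorname{cl}(s(d))=\vartheta_*d\), and additivity all go through exactly as you wrote them. The result is precisely the paper's proof, which records the same point by noting that the defect matrix \(C_d(g,h)\) is symmetric with zero diagonal.
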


\begin{proof}
Fix a temporary ordering of \(X\). For an alternating matrix \(B\), let
\[
U(B)_{xy}
=
\begin{cases}
B_{xy},&x<y,\\
0,&x\geq y.
\end{cases}
\]
Then \(U(B)-U(B)^{\mathsf T}=B\).
Given \(d\in Z^1(\Gamma,\AltBich(A))\), define
\[
\tau^d_g(\alpha,\beta)
=
q^{\alpha^{\mathsf T}U(d_g)\beta}.
\]
The alternating part of \(\tau^d_g\) is \(d_g\). Its crossed defect
has matrix
\begin{equation}\label{eq:C-d-matrix}
C_d(g,h)
=
U(d_g)+g\cdot U(d_h)-U(d_{gh}).
\end{equation}
Since \(d_{gh}=d_g+g\cdot d_h\), subtracting the transpose in
\eqref{eq:C-d-matrix} gives zero. Each summand in that equation has
zero diagonal, so \(C_d(g,h)\) is symmetric with zero diagonal.

For every such matrix \(C\), put
\begin{equation}\label{eq:sigma-d-quadratic}
\mathfrak q_C(\alpha)
=
q^{\sum_{x<y}C_{xy}\alpha_x\alpha_y},
\qquad
\sigma^d_{g,h}=\mathfrak q_{C_d(g,h)}.
\end{equation}
Because \(C\) is symmetric with zero diagonal, the exponent is an
intrinsic sum over unordered pairs. Hence \(C\mapsto\mathfrak q_C\) is
independent of the temporary ordering of \(X\) and is
\(\Gamma\)-equivariant.
Direct expansion gives
\[
(\partial\mathfrak q_C)(\alpha,\beta)
=q^{-\alpha^{\mathsf T}C\beta}.
\]
Hence \(\partial\sigma^d_{g,h}=\Omega_{\tau^d}(g,h)^{-1}\).
Moreover,
\(C_d=\partial_\Gamma(U\circ d)\). The assignment
\(C\mapsto\mathfrak q_C\) is additive, so
\(\partial_\Gamma\sigma^d
=\mathfrak q_{\partial_\Gamma C_d}=1\).
Thus \((\sigma^d,\tau^d)\) is a total cocycle mapping to
\(\vartheta_*d\).
At the level of obstruction classes, the identity
\(\partial\sigma^d=\Omega_{\tau^d}^{-1}\) reads
\[
H^2(\Gamma,\partial)[\sigma^d]
=
\delta(\vartheta_*d)^{-1}.
\]

All assignments in the construction are additive, so
\(d\mapsto[(\sigma^d,\tau^d)]\) is a homomorphic section. Its class
does not depend on the temporary order. Indeed, if \(U'\) is obtained
from another order and \(D=U'-U\), then every \(D(B)\) is symmetric
with zero diagonal. For
\(f_g=\mathfrak q_{D(d_g)}\), the two constructions satisfy
\[
(\sigma^{U'},\tau^{U'})
=
\bigl(\sigma^U\partial_\Gamma f,
      \tau^U(\partial f)^{-1}\bigr),
\]
and hence are equivalent by~\eqref{eq:full-total-equivalence}.
If \(q'=q^r\), with \(r\in R^\times\), is another primitive root, the
matrix representing a fixed bicharacter changes from \(B\) to
\(r^{-1}B\). The resulting functions are therefore unchanged. The
split exact sequence~\eqref{eq:permutation-split-kac} now gives the
direct-sum decomposition, and \(q\) identifies \(A^\vee\) with \(R[X]\).
\end{proof}

The splitting also identifies the algebra-split subgroup. Let
\(\jmath:H^2(\Gamma,A^\vee)\to
H^2(\Gamma,C^1(A,\kk^\times))\) be induced by
\(A^\vee\hookrightarrow C^1(A,\kk^\times)\). Under the
decomposition~\eqref{eq:permutation-full-opext}, the multiplication
component of \((\xi,d)\) is \(\jmath(\xi)[\sigma^d]\). Thus
\(\Ecal(A,\Gamma)\) corresponds to
\[
\bigl\{(\xi,d)\mid \jmath(\xi)[\sigma^d]=1\bigr\}.
\]
The projection of this subgroup onto the second factor has kernel
\(\ker\jmath\) and image \(\ker(\delta\circ\vartheta_*)\), recovering
Corollary~\ref{cor:general-obstruction-exact}.

Although the section in Theorem~\ref{thm:permutation-kac-vanishing} is
well defined on extension classes, the coordinate decomposition in
Corollary~\ref{cor:full-opext-orbit-formula} depends on the choices made
in that corollary and is noncanonical.

Thus every crossed cohomological datum \(d\) for a permutation module is
realized by a cocentral abelian extension. By
Theorem~\ref{thm:general-obstruction}, it has an algebra-split
realization exactly when \(\delta(\vartheta_*d)\) is trivial. The quadratic
correction compensates for the strict obstruction in the total complex
by canceling the transgression \(d_2(\vartheta_*d)\) without forcing
the strict obstruction to vanish.

\begin{corollary}\label{cor:full-opext-orbit-formula}
Choose one representative \(x\) of each \(\Gamma\)-orbit in \(X\) and
one representative \(P\) of each \(\Gamma\)-orbit in
\(\binom{X}{2}\). Then there is a noncanonical isomorphism
\begin{equation}\label{eq:full-opext-orbit-formula}
\begin{aligned}
\Opext_\Gamma(R[X])
\cong{}&
\bigoplus_{x\in\Gamma\backslash X}H^2(\Gamma_x,R)
\\
&\oplus
\bigoplus_{P\in\Gamma\backslash\binom X2}
\left(
R^{|\Gamma\mathbin{\cdot}P|-1}
\oplus
Z^1(\Gamma_P,R_{\varepsilon_P})
\right).
\end{aligned}
\end{equation}
\end{corollary}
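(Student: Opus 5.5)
Start from the splitting \eqref{eq:permutation-full-opext} of Theorem~\ref{thm:permutation-kac-vanishing}, which gives
\[
\Opext_\Gamma(R[X])\cong H^2(\Gamma,R[X])\oplus Z^1(\Gamma,\AltBich(R[X])).
\]
It then suffices to compute each summand orbit by orbit. Both summands will be decomposed with the module decompositions of Proposition~\ref{prop:perm-decomposition} and the Shapiro isomorphism. The chosen orbit representatives fix the Shapiro identifications; this is where the noncanonicity enters.

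\textbf{First summand.} Write $R[X]\cong\bigoplus_{x\in\Gamma\backslash X}\Ind_{\Gamma_x}^\Gamma R$. Each stabilizer $\Gamma_x$ has finite index, so induction and coinduction agree, and Shapiro gives
\[
H^2(\Gamma,R[X])\cong\bigoplus_{x}H^2(\Gamma_x,R).
\]

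\textbf{Second summand.} $Z^1$ is additive, so Proposition~\ref{prop:perm-decomposition} gives
\[
Z^1(\Gamma,\AltBich(A))\cong\bigoplus_P Z^1(\Gamma,M_P),\qquad M_P=\Ind_{\Gamma_P}^\Gamma R_{\varepsilon_P}.
\]
It remains to prove, for each pair orbit, a noncanonical isomorphism
\[
Z^1(\Gamma,M_P)\cong R^{|\Gamma\cdot P|-1}\oplus Z^1(\Gamma_P,R_{\varepsilon_P}).
\]
I would use the exact sequence
\[
0\to M_P/M_P^\Gamma\to Z^1(\Gamma,M_P)\to H^1(\Gamma,M_P)\to0
\]
together with its counterpart for $\Gamma_P$,
\[
0\to R_{\varepsilon_P}/R_{\varepsilon_P}^{\Gamma_P}\to Z^1(\Gamma_P,R_{\varepsilon_P})\to H^1(\Gamma_P,R_{\varepsilon_P})\to0,
\]
and compare the two through Shapiro on $H^1$. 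A cleaner route is to work directly at the cocycle level. Let
\[
e:M_P\to R_{\varepsilon_P}
\]
be evaluation at the chosen oriented representative of $P$; it is $\Gamma_P$-equivariant. The restriction-then-evaluate map
\[
\rho:Z^1(\Gamma,M_P)\to Z^1(\Gamma_P,R_{\varepsilon_P})
\]
is then defined. I claim $\rho$ is surjective and $\ker\rho\cong R^{|\Gamma\cdot P|-1}$, and that $\rho$ splits.

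\textbf{Surjectivity and splitting.} Fix coset representatives $g_1=1,g_2,\dots,g_k$ of $\Gamma/\Gamma_P$, so that $k=|\Gamma\cdot P|$. A $\Gamma_P$-cocycle $c$ is transported to a $\Gamma$-cocycle by the explicit Shapiro cocycle-level formula for coinduced modules: for $g\in\Gamma$ and each $i$, write $g^{-1}g_i=g_j h$ with $h\in\Gamma_P$, and set the $g_i$-coordinate of the value at $g$ to be $c(h)$, adjusted by the usual normalization. One checks that this transport is additive, satisfies the cocycle identity, and is a right inverse to $\rho$.

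\textbf{Kernel.} Suppose a $\Gamma$-cocycle $z$ restricts trivially. Shapiro injectivity on $H^1$ shows that $z$ is principal, $z=\partial m$ for some $m\in M_P$. The condition $e(\partial m)|_{\Gamma_P}=0$ then says exactly that $e(m)$ is $\Gamma_P$-invariant in $R_{\varepsilon_P}$. Since the principal cocycle $\partial m$ depends only on $m$ modulo $M_P^\Gamma$, the kernel is
\[
\ker\rho\cong\{m\in M_P: e(m)\in R_{\varepsilon_P}^{\Gamma_P}\}/M_P^\Gamma.
\]
Next compute $M_P^\Gamma$. A $\Gamma$-fixed $m$ is determined by $e(m)\in R_{\varepsilon_P}^{\Gamma_P}$, and every such value extends uniquely to a fixed element. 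Hence $e$ maps $M_P^\Gamma$ isomorphically onto $R_{\varepsilon_P}^{\Gamma_P}$. Quotienting out the $e$-coordinate therefore identifies the kernel with the remaining coordinates $R^{k-1}$, indexed by $g_2,\dots,g_k$.

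\textbf{Expected obstacle.} The main obstacle is the cocycle-level bookkeeping: checking that the transported cocycle is genuinely a right inverse of $\rho$, and that the kernel computation respects the sign twist. If the direct formula becomes messy, I would fall back on the nine-lemma comparison of the two short exact sequences above. There $e$ induces $M_P/M_P^\Gamma\to R_{\varepsilon_P}/R_{\varepsilon_P}^{\Gamma_P}$, which is surjective with kernel $R^{k-1}$ by the fixed-point computation, and Shapiro gives an isomorphism on $H^1$. The snake lemma then yields the short exact sequence
\[
0\to R^{k-1}\to Z^1(\Gamma,M_P)\to Z^1(\Gamma_P,R_{\varepsilon_P})\to0.
\]
This sequence splits because $R^{k-1}$ is free over $R=\Z/n\Z$, hence injective as a $\Z/n\Z$-module. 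All groups involved are $R$-modules, since $n$ kills $M_P$. Summing over orbits gives \eqref{eq:full-opext-orbit-formula}.
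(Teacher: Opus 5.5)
Your proof is correct, and its overall structure follows the paper. You split off \(H^2(\Gamma,R[X])\) via Theorem~\ref{thm:permutation-kac-vanishing}, apply Shapiro orbitwise, and study the restriction-then-evaluate map \(\rho\), which is exactly the paper's \(\operatorname{sh}(c)(h)=c(h)(1)\). The difference is in how you show that \(\rho\) is split surjective with kernel \(R^{k-1}\).

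The paper stays at the cocycle level for an arbitrary \(H\)-module \(M\). It writes down the explicit right inverse \(c_b(g)(y)=h_y\cdot b(\kappa(t_y,g))\). For \(c\in\ker(\operatorname{sh})\) it builds \(F_c(y)=c(y)(1)\) directly, with \(F_c(1)=0\) and \(c(g)=g\cdot F_c-F_c\). The kernel is therefore parametrized by the functions vanishing at \(1\), giving \(M^{[\Gamma:H]-1}\), with no appeal to Shapiro on \(H^1\).

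Your snake-lemma version compares \(0\to B^1\to Z^1\to H^1\to0\) for \(\Gamma\) and for \(\Gamma_P\), using Shapiro in degrees \(0\) and \(1\). This avoids all coset bookkeeping and gives surjectivity of \(\rho\) and \(\ker\rho\cong\ker e\cong R^{k-1}\) cleanly. However, it only produces a short exact sequence. You then split it using self-injectivity of \(\Z/n\Z\). That is valid here, but it depends on these particular coefficients. The paper's explicit section gives \(Z^1(\Gamma,\Ind_H^\Gamma M)\cong M^{[\Gamma:H]-1}\oplus Z^1(H,M)\) for every \(M\), and it also yields explicit cocycle formulas.

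Your ``cleaner route'' transport is only a sketch, and as literally written it is not a right inverse of \(\rho\). Take the model \(\bigoplus_i g_i\otimes R_{\varepsilon_P}\) with your notation \(g^{-1}g_i=g_jh\). For \(g\in\Gamma_P\) and \(i=1\), your formula gives \(c(g^{-1})\) rather than \(c(g)\). The coordinate that works is \(c(h^{-1})=c(g_i^{-1}gg_j)\), which matches the paper's \(b(\kappa(t,g))\). So the argument should rest on your snake-lemma fallback, which is complete as stated.
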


\begin{proof}
Shapiro's lemma and the point-orbit decomposition give
\[
H^2(\Gamma,R[X])
\cong
\bigoplus_{x\in\Gamma\backslash X}H^2(\Gamma_x,R).
\]
To describe the cocycle-level Shapiro isomorphism, let \(H\leq\Gamma\)
and let \(M\) be an \(H\)-module. Realize the coinduced module as the module of functions
\(F:\Gamma\to M\) satisfying \(F(hg)=h\cdot F(g)\), with action
\((g\cdot F)(y)=F(yg)\). Choose a transversal \(T\) for
\(H\backslash\Gamma\), with \(1\in T\), and write \(y=h_yt_y\), where
\(h_y\in H\) and \(t_y\in T\). For \(t\in T\) and \(g\in\Gamma\),
write
\[
tg=\kappa(t,g)(t\star g),
\qquad
\kappa(t,g)\in H,\quad t\star g\in T.
\]
The Shapiro map
\[
\operatorname{sh}(c)(h)=c(h)(1)
\]
is split surjective on cocycles. Indeed, for \(b\in Z^1(H,M)\), a
right inverse sends \(b\) to the cocycle \(c_b\) defined by
\[
c_b(g)(y)
=
h_y\cdot b\bigl(\kappa(t_y,g)\bigr),
\]
as follows from
\(\kappa(t,gg')=\kappa(t,g)\kappa(t\star g,g')\).

If \(c\in\ker(\operatorname{sh})\), put \(F_c(y)=c(y)(1)\). Then
\(F_c(hy)=h\cdot F_c(y)\), \(F_c(1)=0\), and
\[
c(g)(y)=F_c(yg)-F_c(y)=(g\cdot F_c-F_c)(y).
\]
Conversely, every \(F\in\operatorname{Coind}_H^\Gamma M\) with
\(F(1)=0\) gives a kernel element. This parametrization is injective.
Indeed, if \(g\cdot F-F=0\), then \(F\) is \(\Gamma\)-invariant and hence
\(F(y)=F(1)=0\) for every \(y\in\Gamma\). Restriction to
\(T\setminus\{1\}\) therefore identifies these functions with
\(M^{|T|-1}\). Since induction and coinduction agree at finite index,
this gives the transversal-dependent
isomorphism~\cite[Chapter~III]{Brown1982}
\[
Z^1(\Gamma,\Ind_H^\Gamma M)
\cong
M^{[\Gamma:H]-1}\oplus Z^1(H,M).
\]
Applying this identity to the pair-orbit decomposition in
Proposition~\ref{prop:perm-decomposition}, and then using
Theorem~\ref{thm:permutation-kac-vanishing}, proves
\eqref{eq:full-opext-orbit-formula}. Its noncanonicity comes from the
identification \(A^\vee\simeq R[X]\), the choices of orientations and
transversals, and the orbit representatives.
\end{proof}

For the natural action of \(\mathfrak S_m\) on
\(X=\{1,\dots,m\}\), assume \(m\geq2\), so that \(R[X]=R^m\). There is
one orbit on points and one orbit on unordered pairs. Their stabilizers
are \(\mathfrak S_{m-1}\) and
\(\mathfrak S_2\times\mathfrak S_{m-2}\), respectively, and the pair
orbit has \(\binom{m}{2}\) elements. The cocycle calculation in the
proof of Proposition~\ref{prop:symmetric-alt-cohomology} gives
\[
Z^1(\mathfrak S_2\times\mathfrak S_{m-2},R_\varepsilon)
\cong
\begin{cases}
R,&m=2,3,\\
R\oplus R[2],&m\geq4.
\end{cases}
\]
Consequently, Corollary~\ref{cor:full-opext-orbit-formula} gives the
noncanonical isomorphism
\begin{equation}\label{eq:symmetric-full-opext}
\Opext_{\mathfrak S_m}(R^m)
\cong
H^2(\mathfrak S_{m-1},R)
\oplus
\begin{cases}
R^{\binom{m}{2}},&m=2,3,\\[1mm]
R^{\binom{m}{2}}\oplus R[2],&m\geq4.
\end{cases}
\end{equation}
The universal coefficient theorem and the Schur multiplier of the
symmetric group give the noncanonical
isomorphisms~\cite{Brown1982,Karpilovsky1987}
\[
H^2(\mathfrak S_r,R)
\cong
\begin{cases}
0,&r=0,1,\\[1mm]
R/2R,&r=2,3,\\[1mm]
R/2R\oplus R[2],&r\geq4.
\end{cases}
\]
Since \(R=\Z/n\Z\), substituting \(r=m-1\) in
\eqref{eq:symmetric-full-opext} gives the noncanonical isomorphism
\begin{equation}\label{eq:symmetric-full-opext-explicit}
\Opext_{\mathfrak S_m}(R^m)
\cong
R^{\binom{m}{2}}
\oplus
\begin{cases}
0,&n\text{ odd or }m=2,\\[1mm]
\Z/2\Z,&n\text{ even and }m=3,\\[1mm]
(\Z/2\Z)^2,&n\text{ even and }m=4,\\[1mm]
(\Z/2\Z)^3,&n\text{ even and }m\geq5.
\end{cases}
\end{equation}

Let \(m,n\geq2\), and let \(H_{n,m}\) be the generalized
Kac--Paljutkin algebra constructed in~\cite{Lomp2025} from the primitive
\(n\)-th root \(q\) fixed in Section~\ref{sec:permutation}. The root
\(q\) determines an \(\mathfrak S_m\)-equivariant identification
\[
\kk[(\Z/n\Z)^m]\simeq \kk^{(\Z/n\Z)^m}.
\]
Let
\[
\mathfrak s:
Z^1\bigl(\mathfrak S_m,\AltBich(R^m)\bigr)
\longrightarrow
\Opext_{\mathfrak S_m}(R^m)
\]
be the homomorphic section constructed in
Theorem~\ref{thm:permutation-kac-vanishing}.

\begin{proposition}
\label{prop:Hnm-extension-class}
With this notation,
\[
[H_{n,m}]=\mathfrak s(d_{\operatorname{inv}}).
\]
This class has order \(n\).
\end{proposition}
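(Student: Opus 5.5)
We would first put \(H_{n,m}\) into the normal form of Section~\ref{sec:extensions}. The construction in~\cite{Lomp2025} presents \(H_{n,m}\) as the group algebra \(\kk[(\Z/n\Z)^m]\) smashed with \(\kk\mathfrak S_m\) as an algebra, with generators \(u_{s_i}\) implementing the permutation action. The coproduct of each \(u_{s_i}\) is twisted by the adjacent twist \(J_i\). Transporting along the \(\mathfrak S_m\)-equivariant identification \(\kk[(\Z/n\Z)^m]\simeq\kk^{R^m}\) determined by \(q\), we obtain an extension of \(\kk\mathfrak S_m\) by \(\kk^{R^m}\) with trivial coaction. The generators give elements \(u_{s_i}\) with \(\Delta(u_{s_i})=J_{s_i}(u_{s_i}\otimes u_{s_i})\). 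The first concrete step is to check, by a Fourier-transform computation, that the twist \(J_{s_i}\) corresponds to the function \(\tau_{s_i}=c_i\,\partial f_i\) on \(A\times A\), possibly up to a coboundary \(\partial g_i\) with \(g_i\) \(s_i\)-invariant. Here \(c_i(a,b)=q^{a_ib_{i+1}}\) and \(f_i\) are as in Proposition~\ref{prop:inversion-strict-lift}. The \(\zeta\)-factor reflects the Kac--Paljutkin-type square root in the twist.

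With this identification, the relations \(u_{s_i}^2=1\) and the braid relations in \(H_{n,m}\) say that \(s_i\mapsto u_{s_i}\) extends to an algebra section. Hence \(H_{n,m}\) is algebra-split, and its strict datum is the crossed homomorphism \(\tau\) built in Proposition~\ref{prop:inversion-strict-lift}. In total-complex language, \([H_{n,m}]=[(1,\tau)]\), so \(\operatorname{mult}([H_{n,m}])=1\). Moreover \(\operatorname{cl}([H_{n,m}])\) agrees with \(\vartheta_*d_{\operatorname{inv}}\) on the generators \(s_i\). Both are crossed homomorphisms, so they agree everywhere.

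By the split exact sequence~\eqref{eq:permutation-split-kac}, the classes \([H_{n,m}]\) and \(\mathfrak s(d_{\operatorname{inv}})=[(\sigma^{d},\tau^{d})]\) have the same image in \(Z^1\). Their quotient is therefore \([(\chi,1)]\) for some \(\chi\in Z^2(\mathfrak S_m,A^\vee)\). We must show this class is trivial. The plan is to compare explicitly using the ordering \(1<\dots<m\) of \(X\). Then \(U(E_{i,i+1})\) is exactly the matrix of \(c_i\), so \(\tau^d_{s_i}=c_i\) and \(\tau_{s_i}=\tau^d_{s_i}\,\partial f_i\). Set \(F_{s_i}=f_i^{-1}\) and extend \(F\) to a normalized \(1\)-cochain on \(\mathfrak S_m\) through reduced words, using the quadratic functions \(f_{rs}\). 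We then verify that
\[
(\sigma^d\,\partial_\Gamma F,\ \tau^d(\partial F)^{-1})=(1,\tau),
\]
possibly up to characters, by comparing both sides on generators and using the total-cocycle identities~\eqref{eq:full-total-cocycle}. A cleaner route: for \(w\in\mathfrak S_m\), \(\tau_w\) has bicharacter part \(\prod_{r<s,\,w^{-1}(r)>w^{-1}(s)}c_{rs}\) and coboundary part \(\partial\prod f_{rs}\) over the same pairs. This follows from the braid computation in Proposition~\ref{prop:inversion-strict-lift}. Taking \(F_w=\prod f_{rs}^{-1}\) over the inversion pairs of \(w^{-1}\) makes \(\tau=\tau^d(\partial F)^{-1}\) on the nose. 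The mixed identity then forces \(\partial(\sigma^d\partial_\Gamma F)=1\), so \(\chi:=\sigma^d\partial_\Gamma F\) is \(A^\vee\)-valued. The main obstacle is showing that \(\chi\) is a coboundary. We would compute it directly: \(\chi\) is a ratio of \(\zeta\)-powers of products \(\bar a_r\bar a_s\), and carries produce factors \(\zeta^{n(\cdot)}=\pm1\). If it is not identically \(1\), we would show it is the coboundary of an explicit character-valued cochain. This is the delicate parity bookkeeping for even \(n\).

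For the order, \(\mathfrak s\) is an injective homomorphism, so the order of \([H_{n,m}]\) equals the order of \(d_{\operatorname{inv}}\) in \(Z^1(\mathfrak S_m,\AltBich(R^m))\). Since \(d_{\operatorname{inv}}(s_1)=E_{12}\) has order \(n\) in \(\AltBich(R^m)\cong\) alternating matrices over \(R=\Z/n\Z\), and \(n\,d_{\operatorname{inv}}=0\), this order is exactly \(n\).
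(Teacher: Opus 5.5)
Your overall plan matches the paper's: realize $[H_{n,m}]$ by an explicit total cocycle, use the inversion-set cochain $f_w=\prod_{(p,r)\in\mathcal I(w)}f_{pr}$ to pass between a section of $H_{n,m}$ and an algebra section, and read off the order from injectivity of $\mathfrak s$. Here $\mathcal I(w)=\{(p,r)\mid p<r,\ w^{-1}(p)>w^{-1}(r)\}$, and the order argument is fine. The proof has two gaps, however.

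\textbf{First gap: your model of $H_{n,m}$ is wrong in a way that matters.} The generators $z_i$ of $H_{n,m}$ over $s_i$ satisfy $z_i^2=t_i=\mu(J_i)$, not $z_i^2=1$. Under the $q$-identification the twist $J_i$ becomes exactly $c_i(a,b)=q^{a_ib_{i+1}}$. No $\zeta$ appears in the coproduct, so your Fourier step cannot produce $c_i\,\partial f_i$. In fact no involutive $u$ over $s_i$ can satisfy $\Delta(u)=c_i(u\otimes u)$: one would get $\Delta(u^2)=c_i(s_i\cdot c_i)=\sigma_i\neq1$.

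The square root enters through the multiplication, not the twist. You must set $u_i=f_i^{-1}z_i$ and obtain $u_i^2=1$ from $f_i^2=t_i$ and $s_i\cdot f_i=f_i$. You must also derive the Coxeter relations for the $u_i$ from those of the $z_i$, using $z_w=f_w\,u_{i_1}\cdots u_{i_\ell}$ and the fact that $f_w$ does not depend on the reduced word. Only then do you get $\Delta(u_i)=(c_i\,\partial f_i)(u_i\otimes u_i)$ and $[H_{n,m}]=[(1,\tau)]$. Your proposal assumes these relations rather than deriving them.

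\textbf{Second gap: the decisive comparison is left open.} Your reduction via $F=f^{-1}$ is correct. After it, everything rests on the class of the $A^\vee$-valued cocycle $\chi=\sigma^{d_{\operatorname{inv}}}(\partial_\Gamma f)^{-1}$. The split sequence cannot settle this, because $H^2(\mathfrak S_m,A^\vee)\cong H^2(\mathfrak S_{m-1},R)$ is nonzero for $n$ even and $m\geq3$. Without it you only know that $[H_{n,m}]$ and $\mathfrak s(d_{\operatorname{inv}})$ agree modulo $H^2(\mathfrak S_m,A^\vee)$.

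You expect carries and parity bookkeeping, but in fact $\chi\equiv1$ and no carries occur. The reason is that $\mathfrak S_m$ only permutes coordinates, and the same integer representatives $\bar a_p$ are used in every coordinate. Let $\widetilde U_w$ be the integral indicator matrix of $\mathcal I(w)$. Then:
\begin{itemize}
\item $f_w(a)=\zeta^{\sum_{p<r}(\widetilde U_w)_{pr}\bar a_p\bar a_r}$.
\item The signed inversion identity over $\Z$ makes $\widetilde C(g,h)=\widetilde U_g+g\cdot\widetilde U_h-\widetilde U_{gh}$ symmetric with zero diagonal.
\item Hence the $\Gamma$-defect of $\widetilde U+\widetilde U^{\mathsf T}$ is $2\widetilde C$.
\end{itemize}
It follows that
$(\partial_\Gamma f)(g,h)(a)=\zeta^{2\sum_{p<r}\widetilde C(g,h)_{pr}\bar a_p\bar a_r}=\mathfrak q_{C_{d_{\operatorname{inv}}}(g,h)}(a)=\sigma^{d_{\operatorname{inv}}}_{g,h}(a)$ exactly.

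The paper combines this with the reduced-word formula $J(w)=\prod_{(p,r)\in\mathcal I(w)}c_{pr}=\tau^{d_{\operatorname{inv}}}_w$. Together they show that the section $w\mapsto z_w$ already has total cocycle $(\sigma^{d_{\operatorname{inv}}},\tau^{d_{\operatorname{inv}}})$ on the nose.
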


\begin{proof}
Let \(x_j\) be the standard generator of the \(j\)-th cyclic factor of
\((\Z/n\Z)^m\). The identification determined by \(q\) sends \(x_j^\ell\) to the
function \(a\mapsto q^{\ell a_j}\). Hence the defining elements of
\(H_{n,m}\),
\[
J_i
=
\frac1n\sum_{r,s=0}^{n-1}q^{-rs}x_i^r\otimes x_{i+1}^s,
\qquad
t_i=\mu(J_i),
\]
where \(\mu\) denotes multiplication, become
\[
J_i(a,b)=q^{a_i b_{i+1}}=c_i(a,b),
\qquad
t_i(a)=q^{a_i a_{i+1}}.
\]
For \(r\ne s\), write \(c_{rs}(a,b)=q^{a_rb_s}\), and let \(z_i\) be
the generator of \(H_{n,m}\) mapping to \(s_i\). For a reduced
expression \(w=s_{i_1}\cdots s_{i_\ell}\), put \(w_0=1\),
\(w_j=s_{i_1}\cdots s_{i_j}\), and
\(z_w=z_{i_1}\cdots z_{i_\ell}\). The braid and commutation relations
make \(z_w\) independent of the reduced expression. Define \(J(w)\) by
\[
\Delta(z_w)=J(w)(z_w\otimes z_w).
\]
Also define
\[
\mathcal I(w)
=
\{(p,r)\mid p<r,\ w^{-1}(p)>w^{-1}(r)\}.
\]
Then the reduced-word inversion formula gives
\[
J(w)
=
\prod_{j=1}^{\ell}w_{j-1}\cdot c_{i_j,i_j+1}
=
\prod_{(p,r)\in\mathcal I(w)}c_{pr}
=
\tau^{d_{\operatorname{inv}}}_w.
\]

Choose \(\zeta\in\kk^\times\) with \(\zeta^2=q\), which exists because
\(\kk\) is algebraically closed, and fix a normalized set-theoretic
section \(R\to\Z\), \(r\mapsto\bar r\), used in every coordinate. Put
\[
f_{rs}(a)=\zeta^{\bar a_r\bar a_s}=f_{sr}(a),
\qquad
f_w(a)
=
\prod_{(p,r)\in\mathcal I(w)}f_{pr}(a).
\]
Set \(f_i=f_{i,i+1}\) and \(u_i=f_i^{-1}z_i\). Since
\(z_i^2=t_i\), \(s_i\cdot f_i=f_i\), and \(f_i^2=t_i\), one has
\(u_i^2=1\). Expanding along a reduced expression gives
\[
z_w
=
f_w\,u_{i_1}\cdots u_{i_\ell}.
\]
The braid and commutation moves preserve both \(z_w\) and the
inversion-set expression for \(f_w\). Hence the \(u_i\) satisfy the
Coxeter relations. If \(u_w\) denotes their resulting product, then
\(z_w=f_wu_w\).

It remains to compare the multiplication cocycles. Let
\(\widetilde U_w\) be the integral upper-triangular matrix whose
\((p,r)\)-entry is the indicator of \((p,r)\in\mathcal I(w)\).
The signed inversion-set identity says that the assignment
\(w\mapsto\widetilde U_w-\widetilde U_w^{\mathsf T}\) is an integral
crossed homomorphism. Consequently,
\[
\widetilde C(g,h)
=
\widetilde U_g+g\cdot\widetilde U_h-\widetilde U_{gh}
\]
is symmetric with zero diagonal, and its reduction modulo \(n\) is
\(C_{d_{\operatorname{inv}}}(g,h)\) from
\eqref{eq:C-d-matrix}. Since \(\widetilde C(g,h)\) is symmetric, adding
the transpose of its defining identity shows that the defect of
\(\widetilde U_w+\widetilde U_w^{\mathsf T}\) is
\(2\widetilde C(g,h)\).
Therefore, using the same set-theoretic section in every coordinate,
\begin{align*}
(\partial_\Gamma f)(g,h)(a)
&=
f_g(a)(g\cdot f_h)(a)f_{gh}(a)^{-1}\\
&=
\zeta^{
2\sum_{p<r}\widetilde C(g,h)_{pr}\bar a_p\bar a_r}\\
&=
q^{
\sum_{p<r}C_{d_{\operatorname{inv}}}(g,h)_{pr}a_pa_r}\\
&=
\mathfrak q_{C_{d_{\operatorname{inv}}}(g,h)}(a)
=
\sigma^{d_{\operatorname{inv}}}_{g,h}(a).
\end{align*}
The identity for \(\widetilde C(g,h)\) produces the factor \(2\)
integrally, so no invertibility assumption on \(2\in R\) is used.
If \(\sigma^{(z)}\) denotes the multiplication cocycle associated with the section
\(w\mapsto z_w\), the identity \(z_w=f_wu_w\) gives
\[
\sigma^{(z)}(g,h)
=
f_g(g\cdot f_h)f_{gh}^{-1}
=
(\partial_\Gamma f)(g,h)
=
\sigma^{d_{\operatorname{inv}}}_{g,h}.
\]
Together with \(J(w)=\tau^{d_{\operatorname{inv}}}_w\), this shows,
with the conventions of~\eqref{eq:full-total-cocycle}, that the total
cocycle of \(H_{n,m}\) associated with this section is precisely the
one defining \(\mathfrak s(d_{\operatorname{inv}})\). The Coxeter
relations for the \(u_i\) show that \(s_i\mapsto u_i\) is an algebra
section. Moreover,
\[
\Delta(u_i)
=
(c_i\,\partial f_i)(u_i\otimes u_i).
\]
Since \(\sigma^{(z)}=\partial_\Gamma f\), it follows that
\(\operatorname{mult}([H_{n,m}])=1\).

Finally, \(\AltBich(R^m)\) has exponent \(n\), while
\(d_{\operatorname{inv}}(s_i)=E_{i,i+1}\) has order \(n\). Hence
\(d_{\operatorname{inv}}\) has order \(n\), and the injectivity of
\(\mathfrak s\) gives the same order for \([H_{n,m}]\).
\end{proof}

Consequently,
\[
(\vartheta_*^{-1}\circ\operatorname{cl})([H_{n,m}])
=d_{\operatorname{inv}},
\qquad
\operatorname{mult}([H_{n,m}])=1.
\]

The bicharacters \(c_i\) associated with the defining section
\(s_i\mapsto z_i\) do not themselves form strict Hopf data for any
\(n\geq2\). Their crossed defect is canceled by \(\partial\sigma^{(z)}\) in
the mixed identity~\eqref{eq:full-total-cocycle}. Passing to
\(u_i=f_i^{-1}z_i\) makes \(\sigma^{(z)}\) trivial and produces the strict datum
\(c_i\partial f_i\). When \(n\) is even,
Proposition~\ref{prop:inversion-lift} shows that \([H_{n,m}]\) has no
algebra-split representative whose coproduct cocycles are bicharacters
under equivalences inducing the identity on \(\kk^{R^m}\) and
\(\kk\mathfrak S_m\).

\section{Coxeter modules}\label{sec:arithmetic-coxeter}

The Coxeter examples arise from arithmetic reductions of the geometric
representation, which is naturally defined over a ring of algebraic
integers. Reduction modulo finite-index ideals produces finite
coefficient modules on which cohomological data can be studied even for
infinite or non-crystallographic Coxeter groups. We first express the
full Coxeter cohomological problem by relations on the simple
reflections and then isolate a computable linearized subfamily. Finite
dihedral groups and affine type \(\widetilde A_1\) provide concrete
calculations.

Let \((W,S)\) be a Coxeter system of finite rank, with
\(S=\{s_i\}_{i\in I}\) and Coxeter matrix \((m_{ij})\).

For \(i\ne j\), set \(c_{ij}=2\cos(\pi/m_{ij})\) when
\(m_{ij}<\infty\), and set \(c_{ij}=2\) when \(m_{ij}=\infty\).
This normalization corresponds to the standard geometric
representation after choosing simple roots whose off-diagonal Cartan
coefficients are \(-c_{ij}\); see~\cite[Chapter~5]{Humphreys1990}.

Define the Coxeter field
\[
K_W
=
\mathbb Q(c_{ij}\mid i\ne j,\ m_{ij}<\infty),
\]
and let \(\mathcal O_W\) be its ring of integers.
For \(i\ne j\) and finite \(m_{ij}\), the number \(c_{ij}\) is the sum of a
\(2m_{ij}\)-th root of unity and its inverse, so it belongs to
\(\mathcal O_W\); the same is clear for \(c_{ij}=2\).
The associated arithmetic Coxeter lattice is
\[
L_W
=
\bigoplus_{i\in I}\mathcal O_W\alpha_i.
\]

The simple reflections act by
\[
s_i(\alpha_i)=-\alpha_i,
\qquad
s_i(\alpha_j)
=
\alpha_j+c_{ij}\alpha_i
\quad (j\neq i).
\]
This yields the geometric representation
\(W\longrightarrow\Aut_{\mathcal O_W}(L_W)\).

Let \(\mathfrak a\subsetneq\mathcal O_W\) be an ideal of finite index and
set
\[
R_{\mathfrak a}
=
\mathcal O_W/\mathfrak a,
\qquad
L_{\mathfrak a}
=
L_W/\mathfrak a L_W.
\]

Then \(L_{\mathfrak a}\cong R_{\mathfrak a}^{|S|}\) is a finite
\(R_{\mathfrak a}\)-module equipped with an induced action
of \(W\). Viewed additively, \(L_{\mathfrak a}\) is the finite abelian
group to which the obstruction theory of
Section~\ref{sec:obstructions} applies. Thus every finite-rank Coxeter
system, even when \(W\) is
infinite or non-crystallographic, gives rise to a finite coefficient
module after arithmetic reduction.

The induced action of \(W\) on \(L_{\mathfrak a}\) determines an
action on \(\AltBich(L_{\mathfrak a})\), the group of alternating
bicharacters on \(L_{\mathfrak a}\). In this section we write this
coefficient group additively.

A \emph{Coxeter cohomological datum modulo \(\mathfrak a\)} is a
crossed homomorphism \(d:W\to\AltBich(L_{\mathfrak a})\).
Via \(\vartheta\), it specifies the crossed family of cohomology classes
\(\vartheta_*d\). A strict Hopf datum realizing this family exists
precisely when \(\delta(\vartheta_*d)\) is trivial, by
Theorem~\ref{thm:general-obstruction}; a bicharacter lift of \(d\) exists
precisely when \(\operatorname{ob}_{\operatorname{bich}}(d)\) is
trivial, by Theorem~\ref{thm:bich-obstruction}.

\begin{definition}
A \emph{Coxeter \(1\)-cocycle datum} is a family
\((\xi_i)_{i\in I}\) of elements of \(\AltBich(L_{\mathfrak a})\) such
that
\[
\xi_i+s_i\cdot\xi_i=0
\]
for every \(i\in I\), and, for every finite braid relation \(m_{ij}<\infty\),
\[
\xi_i+s_i\cdot\xi_j+s_i s_j\cdot\xi_i+\cdots
=
\xi_j+s_j\cdot\xi_i+s_j s_i\cdot\xi_j+\cdots,
\]
where both sides have \(m_{ij}\) terms and the indices alternate.
\end{definition}

\begin{proposition}\label{prop:coxeter-crossed}
The assignment \(d\mapsto(d(s_i))_{i\in I}\) identifies
\(Z^1(W,\AltBich(L_{\mathfrak a}))\) with the set of Coxeter
\(1\)-cocycle data.
\end{proposition}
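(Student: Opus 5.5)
The plan is to use the standard correspondence between crossed homomorphisms and group homomorphisms into a semidirect product, as in the proof of Proposition~\ref{prop:inversion-strict-lift}. Write \(M=\AltBich(L_{\mathfrak a})\), additively. A map \(d:W\to M\) is a crossed homomorphism exactly when \(w\mapsto(d(w),w)\) is a group homomorphism \(W\to M\rtimes W\), where the product is \((\mu,w)(\nu,v)=(\mu+w\cdot\nu,wv)\). This reduces the statement to the universal property of the Coxeter presentation
\[
W=\langle s_i,\ i\in I\mid s_i^2=1,\ (s_is_j)^{m_{ij}}=1\text{ for }m_{ij}<\infty\rangle .
\]

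The first step is injectivity. A crossed homomorphism is determined by its values on a generating set, since \(d(wv)=d(w)+w\cdot d(v)\) and \(d(w^{-1})=-w^{-1}\cdot d(w)\). The simple reflections generate \(W\), so \(d\) is determined by \((d(s_i))_{i\in I}\).

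The second step checks that the image consists of Coxeter \(1\)-cocycle data. Applying the crossed identity to \(s_i^2=1\) gives \(0=d(1)=d(s_i)+s_i\cdot d(s_i)\). For \(m_{ij}<\infty\), write the braid relation as the equality of the alternating words \(s_is_js_i\cdots\) and \(s_js_is_j\cdots\) of length \(m_{ij}\). Expanding both sides by the crossed identity yields exactly the two alternating sums
\[
\xi_i+s_i\cdot\xi_j+s_is_j\cdot\xi_i+\cdots,\qquad \xi_j+s_j\cdot\xi_i+s_js_i\cdot\xi_j+\cdots,
\]
each with \(m_{ij}\) terms, where \(\xi_i=d(s_i)\).

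The third step, surjectivity, is the only step with content. Given a Coxeter \(1\)-cocycle datum \((\xi_i)\), define elements \(g_i=(\xi_i,s_i)\in M\rtimes W\) and verify that they satisfy the defining relations of \(W\).
\begin{itemize}
\item The order-two relation: \(g_i^2=(\xi_i+s_i\cdot\xi_i,1)=(0,1)\).
\item The braid relation: the two alternating products of length \(m_{ij}\) have first coordinates equal to the two alternating sums above, and second coordinates equal to the two sides of the braid relation in \(W\). The datum condition makes the first coordinates agree, and the braid relation in \(W\) makes the second coordinates agree.
\end{itemize}
I would use the braid form \((s_is_j\cdots)_{m_{ij}}=(s_js_i\cdots)_{m_{ij}}\) rather than \((s_is_j)^{m_{ij}}=1\). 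This is legitimate because, given \(s_i^2=s_j^2=1\), the two forms of the relation are equivalent, so the presentation may be taken in braid form.

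Since the \(g_i\) satisfy all defining relations, the presentation gives a homomorphism \(\phi:W\to M\rtimes W\) with \(\phi(s_i)=g_i\). Composing \(\phi\) with the projection to \(W\) gives a homomorphism that agrees with the identity on the generators \(s_i\), hence is the identity. Therefore \(\phi(w)=(d(w),w)\) for a unique map \(d:W\to M\). This \(d\) is a crossed homomorphism with \(d(s_i)=\xi_i\), which proves surjectivity.

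Relations with \(m_{ij}=\infty\) impose nothing, matching the definition of a Coxeter \(1\)-cocycle datum.
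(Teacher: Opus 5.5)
Your proposal is correct and follows the paper's proof: injectivity because \(S\) generates \(W\), necessity by expanding the crossed identity along the involutivity and braid relations, and sufficiency by checking that \(s_i\mapsto(\xi_i,s_i)\) respects the Coxeter presentation in \(M\rtimes W\). You also make explicit two points the paper leaves implicit: the braid form of the relation is equivalent to \((s_is_j)^{m_{ij}}=1\) once \(s_i^2=s_j^2=1\) holds, and composing with the projection to \(W\) forces \(\phi(w)=(d(w),w)\).
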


\begin{proof}
Since \(W\) is generated by \(S\), a crossed homomorphism is determined
by the values \(\xi_i=d(s_i)\). Applying
\(d(xy)=d(x)+x\cdot d(y)\) to the involutivity and braid relations gives
the defining Coxeter \(1\)-cocycle equations. Conversely, satisfaction
of the involutivity and braid equations means precisely that, for
\(M=\AltBich(L_{\mathfrak a})\),
\(s_i\mapsto(\xi_i,s_i)\) respects the Coxeter presentation in
\(M\rtimes W\). The multiplication in this semidirect product is
\((m,g)(n,h)=(m+g\cdot n,gh)\); thus the first component of the
resulting homomorphism \(W\to M\rtimes W\) satisfies
\(d(gh)=d(g)+g\cdot d(h)\) and is the required crossed homomorphism.
\end{proof}

Proposition~\ref{prop:coxeter-crossed} identifies
\(Z^1(W,\AltBich(L_{\mathfrak a}))\) with the solution set of a finite
system of relations determined by the Coxeter graph and the induced
action on \(\AltBich(L_{\mathfrak a})\). This describes the full
Coxeter cohomological problem. For the arithmetic calculations, we
restrict to the linearized subfamily, whose
image need not exhaust \(\AltBich(L_{\mathfrak a})\).

The relevant coefficient module is
\[
M_{\mathfrak a}^{\operatorname{lin}}
=
\Hom_{R_{\mathfrak a}}
\bigl(
\wedge^2_{R_{\mathfrak a}}L_{\mathfrak a},
R_{\mathfrak a}
\bigr).
\]
Equip it with the contragredient action
\[
(g\cdot\omega)(v\wedge u)
=
\omega(g^{-1}v\wedge g^{-1}u).
\]
Choose an additive character
\(\chi\colon (R_{\mathfrak a},+)\longrightarrow \kk^\times\).
Composition with \(\chi\) defines a \(W\)-equivariant homomorphism
\[
\begin{aligned}
\iota_\chi:
M_{\mathfrak a}^{\operatorname{lin}}
&\longrightarrow
\AltBich(L_{\mathfrak a}),\\
\iota_\chi(\omega)(v,w)
&=
\chi\bigl(\omega(v\wedge w)\bigr).
\end{aligned}
\]
Call \(\chi\) \emph{generating} if its kernel contains no nonzero ideal
of \(R_{\mathfrak a}\). Then \(\iota_\chi\) is injective, since the image
of a nonzero \(R_{\mathfrak a}\)-linear form is a nonzero ideal and hence
is not contained in \(\ker\chi\).
Generating characters exist. Indeed, after factoring \(\mathfrak a\)
into prime powers, each local factor has ideals forming a chain and a
smallest nonzero ideal. One may choose a character nontrivial on that ideal
and take the product of these characters via the Chinese remainder theorem.
Fix one such character \(\chi\). Then \(\iota_\chi\) identifies the
linearized module with a \(W\)-submodule of
\(\AltBich(L_{\mathfrak a})\) and induces an injection
\[
Z^1(W,M_{\mathfrak a}^{\operatorname{lin}})
\lhook\joinrel\longrightarrow
Z^1(W,\AltBich(L_{\mathfrak a})).
\]
We use the standard notation \(Z^1(G,M)\), \(B^1(G,M)\), and
\(H^1(G,M)\). The group \(Z^1(G,M)\) parametrizes the actual crossed
data, whereas \(H^1(G,M)\) records them modulo principal crossed
homomorphisms. The cohomology groups considered here are those of the
linearized module itself; the lifting arguments use only the injection
of \(Z^1\)-groups. Every lifting statement for
\(d\in Z^1(W,M_{\mathfrak a}^{\operatorname{lin}})\) is applied to
the alternating bicharacter datum \(\iota_\chi\circ d\).

\subsection{Finite Coxeter groups}\label{sec:finite-coxeter}

\begin{proposition}\label{prop:finite-averaging}
If \(W\) is finite and \(|W|\) is invertible in
\(R_{\mathfrak a}\), then
\[
H^1(W,M_{\mathfrak a}^{\operatorname{lin}})=0.
\]
\end{proposition}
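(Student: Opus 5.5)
The plan is the standard averaging argument: exhibit every crossed homomorphism as principal by an explicit formula. Write \(M=M_{\mathfrak a}^{\operatorname{lin}}\), written additively. This is an \(R_{\mathfrak a}\)-module, and the contragredient action of \(W\) is \(R_{\mathfrak a}\)-linear, because the action on \(L_{\mathfrak a}\), hence on \(\wedge^2_{R_{\mathfrak a}}L_{\mathfrak a}\), is \(R_{\mathfrak a}\)-linear. The hypothesis says that the integer \(|W|\) maps to a unit of \(R_{\mathfrak a}\). Therefore multiplication by \(|W|^{-1}\in R_{\mathfrak a}\) is a well-defined, \(W\)-equivariant endomorphism of \(M\).

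Let \(d\in Z^1(W,M)\) and define
\[
m=\frac{1}{|W|}\sum_{h\in W}d(h)\in M .
\]
For \(g\in W\), the crossed identity \(d(gh)=d(g)+g\cdot d(h)\), summed over \(h\in W\) and combined with the reindexing \(h\mapsto gh\), gives
\[
\sum_{h}d(h)=|W|\,d(g)+g\cdot\sum_h d(h).
\]
Multiplying by \(|W|^{-1}\) and using that the action commutes with scalars, we get \(m=d(g)+g\cdot m\). Hence \(d(g)=m-g\cdot m=g\cdot(-m)-(-m)\), so \(d\) is principal. Thus \(Z^1(W,M)=B^1(W,M)\), which is the claim \(H^1(W,M)=0\).

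No step is a real obstacle. The only point to check is that scalar multiplication by \(|W|^{-1}\) makes sense on \(M\) and commutes with the \(W\)-action, and both follow from \(R_{\mathfrak a}\)-linearity of the contragredient action. Nothing specific to Coxeter groups is used. In particular, the argument does not need Proposition~\ref{prop:coxeter-crossed}, and it holds for any finite group acting \(R_{\mathfrak a}\)-linearly.
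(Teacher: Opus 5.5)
Your proposal is correct and is essentially the paper's averaging argument: you average the crossed homomorphism over \(W\), reindex by \(h\mapsto gh\), and divide by \(|W|\). The only difference is cosmetic: the paper takes \(m=-|W|^{-1}\sum_w d(w)\) directly, while you take the opposite sign and then rewrite \(d(g)=m-g\cdot m\) as \(g\cdot(-m)-(-m)\).
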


\begin{proof}
Let \(d:W\to M_{\mathfrak a}^{\operatorname{lin}}\) be a crossed
homomorphism. Since \(|W|\) is invertible in
\(R_{\mathfrak a}\), define
\[
m
=
-|W|^{-1}
\sum_{w\in W} d(w).
\]
Since \(w\mapsto gw\) permutes \(W\), summing
\(d(gw)=d(g)+g\cdot d(w)\) over \(w\in W\) gives
\(\sum_w d(w)=|W|d(g)+g\cdot\sum_w d(w)\). Multiplication by
\(-|W|^{-1}\) therefore gives
\(d(g)=g\cdot m-m\), so \(d\) is principal.
\end{proof}

After decomposing \(R_{\mathfrak a}\) into primary factors, non-principal
linear classes can therefore occur only at primes of
\(R_{\mathfrak a}\) lying over rational primes dividing \(|W|\).

The finite dihedral case admits an explicit computation.

For an integer \(m\ge2\), write
\[
I_2(m)
=
\langle
s,t
\mid
s^2=t^2=1,\,
(st)^m=1
\rangle.
\]

For \(W=I_2(m)\), the arithmetic reduction \(L_{\mathfrak a}\) of the
Coxeter lattice is free of rank two over \(R_{\mathfrak a}\), and its
linear coefficient module is free of rank one.
Let \(R_{\mathfrak a}^{\operatorname{sgn}}\) denote the free rank-one
\(R_{\mathfrak a}\)-module on which each simple reflection acts by
\(-1\).

\begin{proposition}\label{prop:dihedral-sign}
For \(W=I_2(m)\),
\[
M_{\mathfrak a}^{\operatorname{lin}}
\cong
R_{\mathfrak a}^{\operatorname{sgn}}.
\]
\end{proposition}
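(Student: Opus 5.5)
The plan is to observe that for a free rank-two module the module of alternating forms \(\wedge^2_{R_{\mathfrak a}}L_{\mathfrak a}\) is free of rank one, and that \(W\) acts on it through the determinant. Write \(L_{\mathfrak a}=R_{\mathfrak a}\bar\alpha_s\oplus R_{\mathfrak a}\bar\alpha_t\), where bars denote reductions modulo \(\mathfrak a L_W\). Since \(L_W\) is free on \(\alpha_s,\alpha_t\), the module \(\wedge^2_{R_{\mathfrak a}}L_{\mathfrak a}\) is free of rank one, generated by \(\bar\alpha_s\wedge\bar\alpha_t\). Hence
\[
M_{\mathfrak a}^{\operatorname{lin}}
=\Hom_{R_{\mathfrak a}}\bigl(\wedge^2_{R_{\mathfrak a}}L_{\mathfrak a},R_{\mathfrak a}\bigr)
\]
is free of rank one, generated by the form \(\omega_0\) with \(\omega_0(\bar\alpha_s\wedge\bar\alpha_t)=1\).

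Next I compute the action of each generator on \(\bar\alpha_s\wedge\bar\alpha_t\) directly from the reflection formulas. Writing \(c=c_{st}\), we have
\[
s(\bar\alpha_s\wedge\bar\alpha_t)=(-\bar\alpha_s)\wedge(\bar\alpha_t+c\bar\alpha_s)=-\bar\alpha_s\wedge\bar\alpha_t,
\]
using \(\bar\alpha_s\wedge\bar\alpha_s=0\). Symmetrically,
\[
t(\bar\alpha_s\wedge\bar\alpha_t)=(\bar\alpha_s+c\bar\alpha_t)\wedge(-\bar\alpha_t)=-\bar\alpha_s\wedge\bar\alpha_t .
\]
Equivalently, each simple reflection has determinant \(-1\) on \(L_{\mathfrak a}\).

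Since \(s^{-1}=s\), the contragredient action gives \((s\cdot\omega_0)(\bar\alpha_s\wedge\bar\alpha_t)=\omega_0(s\bar\alpha_s\wedge s\bar\alpha_t)=-1\), so \(s\cdot\omega_0=-\omega_0\), and likewise \(t\cdot\omega_0=-\omega_0\). The assignment \(\omega_0\mapsto 1\) is therefore a \(W\)-equivariant isomorphism \(M_{\mathfrak a}^{\operatorname{lin}}\cong R_{\mathfrak a}^{\operatorname{sgn}}\). This is consistent with the defining relations of \(I_2(m)\): the sign character is well defined because \((st)^m\) acts by \((+1)^m=1\).

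There is no real obstacle here. The only point needing care is that \(\wedge^2\) over the residue ring \(R_{\mathfrak a}\), which may have zero divisors or \(2\)-torsion, is still free of rank one on \(\bar\alpha_s\wedge\bar\alpha_t\). This holds because \(\wedge^2\) of a free module of rank two over any commutative ring is free of rank one, with the alternating convention \(v\wedge v=0\) built in.
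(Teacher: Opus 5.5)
Your proof is correct and follows essentially the paper's argument: identify \(M_{\mathfrak a}^{\operatorname{lin}}\) with \(R_{\mathfrak a}\) via the form \(\omega_0\), then observe that each simple reflection acts by \(-1\) through the determinant, which gives \(R_{\mathfrak a}^{\operatorname{sgn}}\). The only difference is that you compute \(s(\bar\alpha_s\wedge\bar\alpha_t)\) and \(t(\bar\alpha_s\wedge\bar\alpha_t)\) explicitly where the paper cites the determinant \(-1\), and you justify freeness of \(\wedge^2_{R_{\mathfrak a}}L_{\mathfrak a}\) over a possibly non-reduced residue ring where the paper leaves it implicit.
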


\begin{proof}
Let \(\omega_0\in M_{\mathfrak a}^{\operatorname{lin}}\) be defined by
\(\omega_0(\alpha_1\wedge\alpha_2)=1\). The map \(r\mapsto r\omega_0\) identifies
\(M_{\mathfrak a}^{\operatorname{lin}}\) with \(R_{\mathfrak a}\).
An element \(w\in W\) acts on the dual exterior square by
\(\det(w)^{-1}\). Each simple reflection has determinant \(-1\), and
the simple reflections generate \(W\), so the resulting action is the
sign character.
\end{proof}

Write \(R_{\mathfrak a}[m]=\{x\in R_{\mathfrak a}\mid mx=0\}\).

\begin{theorem}\label{thm:dihedral-cohomology}
For the finite dihedral group \(I_2(m)\),
\[
\begin{aligned}
H^1(I_2(m),M_{\mathfrak a}^{\operatorname{lin}})
&\cong H^1(I_2(m),R_{\mathfrak a}^{\operatorname{sgn}})\\
&\cong R_{\mathfrak a}/2R_{\mathfrak a}
\oplus R_{\mathfrak a}[m].
\end{aligned}
\]
\end{theorem}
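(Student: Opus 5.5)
The first isomorphism follows directly from Proposition~\ref{prop:dihedral-sign}. It remains to compute \(H^1(I_2(m),R^{\operatorname{sgn}})\) with \(R=R_{\mathfrak a}\) by hand. The method parallels Proposition~\ref{prop:coxeter-crossed}: a crossed homomorphism \(c\) is determined by the pair \((a,b)=(c(s),c(t))\in R^2\). Conversely, a pair \((a,b)\) defines a crossed homomorphism exactly when it satisfies the involutivity and braid relations.

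Involutivity gives \(c(s^2)=a+s\cdot a=a-a=0\), so it is automatic, and likewise for \(t\). For the braid relation, note that \(st\) acts trivially on \(R^{\operatorname{sgn}}\). Hence \(c(st)=a+s\cdot b=a-b\), and by induction \(c((st)^k)=k(a-b)\). Equivalently, the alternating sums in the Coxeter \(1\)-cocycle datum have \(m\) terms with signs \(+,-,+,\dots\). Comparing the two sides gives the single condition
\[
m(a-b)=0 .
\]
One should check this for both parities of \(m\). For odd \(m\), the two sides are \(a-b+a-\cdots+a\) and \(b-a+\cdots+b\), whose difference is again \(m(a-b)\). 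So
\[
Z^1\cong\{(a,b)\in R^2\mid a-b\in R[m]\}\cong R\oplus R[m],
\]
via \((a,b)\mapsto(a,\,a-b)\).

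Principal crossed homomorphisms have the form \(g\mapsto g\cdot r-r\). They send \(s\) and \(t\) to \(-2r\), so in these coordinates
\[
B^1=\{(-2r,0)\}=2R\oplus0 .
\]
Therefore \(H^1=Z^1/B^1\cong R/2R\oplus R[m]\).

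The only delicate point is getting the braid-relation sign bookkeeping right for both parities of \(m\), and seeing that the principal part lies entirely in the first coordinate; the coordinates \((a,a-b)\) handle both. No finiteness or invertibility assumptions on \(R\) are needed.
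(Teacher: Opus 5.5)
Your proof is correct and is essentially the paper's argument: you identify \(Z^1\) with the pairs \((a,b)\) satisfying \(m(a-b)=0\) and the principal crossed homomorphisms with \(\{(-2u,-2u)\}\), then read off \(R_{\mathfrak a}/2R_{\mathfrak a}\oplus R_{\mathfrak a}[m]\) using the coordinate \(a-b\). The differences are cosmetic: you use \((a,\,a-b)\) where the paper uses \((b \bmod 2R_{\mathfrak a},\,a-b)\), and you verify the braid-relation form of the Coxeter \(1\)-cocycle datum for both parities, whereas the paper imposes \((st)^m=1\) directly.
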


\begin{proof}
Write \(a=d(s)\) and \(b=d(t)\). Since both generators act by \(-1\)
on the sign module, their involutivity relations impose no conditions.
Indeed, \(d(s^2)=a+s\cdot a=a-a=0\), and similarly \(d(t^2)=0\). Moreover,
\(d(st)=a+s\cdot b=a-b\). Since \(st\) acts trivially, the relation
\((st)^m=1\) gives
\[
0=d((st)^m)=m(a-b).
\]

Thus
\[
Z^1(I_2(m),R_{\mathfrak a}^{\operatorname{sgn}})
=
\{
(a,b)\in R_{\mathfrak a}^2
\mid
m(a-b)=0
\}.
\]

A principal crossed homomorphism has the form \(d_u(g)=g\cdot u-u\),
hence \((d_u(s),d_u(t))=(-2u,-2u)\).

Therefore
\[
B^1(I_2(m),R_{\mathfrak a}^{\operatorname{sgn}})
=
\{
(-2u,-2u)
\mid
u\in R_{\mathfrak a}
\}.
\]

Under the identification
\(Z^1(I_2(m),R_{\mathfrak a}^{\operatorname{sgn}})\subseteq
R_{\mathfrak a}^2\), define
\[
\begin{aligned}
\Phi:Z^1(I_2(m),R_{\mathfrak a}^{\operatorname{sgn}})
&\longrightarrow
R_{\mathfrak a}/2R_{\mathfrak a}\oplus R_{\mathfrak a}[m],\\
(a,b)&\longmapsto
\bigl(b\bmod 2R_{\mathfrak a},\,a-b\bigr).
\end{aligned}
\]
It is well defined because \(m(a-b)=0\). It is
surjective. Indeed, if
\(c\in R_{\mathfrak a}\) and \(r\in R_{\mathfrak a}[m]\), then
\((c+r,c)\) maps to \((c\bmod 2R_{\mathfrak a},r)\). Its kernel is
formed by the pairs \((a,b)\) with \(a=b\) and
\(b\in 2R_{\mathfrak a}\), hence it is exactly
\(B^1(I_2(m),R_{\mathfrak a}^{\operatorname{sgn}})\). Therefore
the quotient by \(B^1\) is
\(R_{\mathfrak a}/2R_{\mathfrak a}\oplus R_{\mathfrak a}[m]\).
\end{proof}

\begin{remark}
The two summands reflect distinct sources of cohomology, namely failure
of divisibility by \(2\) and \(m\)-torsion arising from the braid
relation.
\end{remark}

When \(2\) is invertible in \(R_{\mathfrak a}\), the additive exponent
of \(L_{\mathfrak a}\) is odd. For the fixed generating character
\(\chi\), Corollary~\ref{cor:odd-exponent} provides a canonical
bicharacter lift of \(\iota_\chi\circ d\) for every
\(d\in Z^1(I_2(m),M_{\mathfrak a}^{\operatorname{lin}})\).
Consequently, each such lifted datum produces a Hopf algebra
of dimension \(|I_2(m)|\,|L_{\mathfrak a}|=2m\,|R_{\mathfrak a}|^2\).

\begin{example}\label{ex:A2-arithmetic-module}
Here \(A_2=I_2(3)\), \(c_{12}=c_{21}=1\), and
\(\mathcal O_W=\Z\). For \(R=\Z/n\Z\) with \(n\ge2\), the reduced module
\(L_n=R\alpha_1\oplus R\alpha_2\cong R^2\) has action
\[
\begin{aligned}
s_1(\alpha_1)&=-\alpha_1,
&
s_1(\alpha_2)&=\alpha_2+\alpha_1,\\
s_2(\alpha_1)&=\alpha_1+\alpha_2,
&
s_2(\alpha_2)&=-\alpha_2.
\end{aligned}
\]
Theorem~\ref{thm:dihedral-cohomology} gives
\[
H^1(I_2(3),R^{\operatorname{sgn}})
\cong
R/2R\oplus R[3].
\]
Thus nonzero linearized cohomology classes occur exactly when
\(2\mid n\) or \(3\mid n\).
\end{example}

\begin{example}
For type \(I_2(5)\), the arithmetic ring is
\(\mathcal O_W=\Z[\varphi]\), where
\(\varphi^2-\varphi-1=0\).

Hence
\[
H^1(I_2(5),R_{\mathfrak a}^{\operatorname{sgn}})
\cong
R_{\mathfrak a}/2R_{\mathfrak a}
\oplus
R_{\mathfrak a}[5].
\]

The \(R_{\mathfrak a}[5]\)-summand shows that quotients supported at
the ramified prime lying over \(5\) produce nonzero linearized
arithmetic cohomology classes.
\end{example}

\subsection{Affine type \texorpdfstring{$\widetilde A_1$}{A1 tilde}}
\label{sec:affine-coxeter}

Since \(\widetilde A_1\) is infinite, the averaging argument of
Proposition~\ref{prop:finite-averaging} does not apply. To obtain a
finite-dimensional Hopf algebra from this infinite Coxeter group, we
pass to a finite quotient that retains both the reduced action and the
lifted datum.
Proposition~\ref{prop:affine-factorization} supplies this simultaneous
factorization for any bicharacter-normalized datum.

For \(g\in\Aut(L_{\mathfrak a})\) and
\(c\in\Bich(L_{\mathfrak a})\), put
\((g\cdot c)(v,u)=c(g^{-1}v,g^{-1}u)\).
For a bicharacter-normalized Hopf datum
\(b\in Z^1(W,\Bich(L_{\mathfrak a}))\), set
\[
W
\xrightarrow{\Phi_b}
\Bich(L_{\mathfrak a})
\rtimes
\Aut(L_{\mathfrak a}),
\qquad
\Phi_b(w)=(b_w,w|_{L_{\mathfrak a}}).
\]
Let \(\Gamma_{\mathfrak a,b}\) be the image of \(\Phi_b\).

\begin{proposition}\label{prop:affine-factorization}
Let \(W\) be any group acting on the finite abelian group
\(L_{\mathfrak a}\), and let \(b\) be a bicharacter-normalized Hopf
datum. Then \(\Gamma_{\mathfrak a,b}\) is finite, and both the action on
\(L_{\mathfrak a}\) and \(b\) descend to
\(\Gamma_{\mathfrak a,b}\). Writing \(\bar b\) for the descended datum,
it determines a finite-dimensional Hopf algebra
\(H(\Gamma_{\mathfrak a,b},L_{\mathfrak a},\bar b)\).
\end{proposition}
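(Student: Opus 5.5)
The plan is to realize everything inside the finite group \(G=\Bich(L_{\mathfrak a})\rtimes\Aut(L_{\mathfrak a})\), with multiplication \((c,g)(c',g')=(c\,(g\cdot c'),gg')\). This group is finite because \(L_{\mathfrak a}\) is a finite abelian group: both \(\Aut(L_{\mathfrak a})\) and \(\Bich(L_{\mathfrak a})\) are finite, the latter since a bicharacter is a function on the finite set \(L_{\mathfrak a}\times L_{\mathfrak a}\) whose values are \(e\)-th roots of unity, \(e\) being the exponent of \(L_{\mathfrak a}\).

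First I will check that \(\Phi_b\) is a group homomorphism. The crossed identity \(b_{xy}=b_x(x\cdot b_y)\) gives
\(\Phi_b(x)\Phi_b(y)=(b_x(x|_{L_{\mathfrak a}}\cdot b_y),xy|_{L_{\mathfrak a}})=\Phi_b(xy)\). Here I use that the \(W\)-action on \(\Bich(L_{\mathfrak a})\) factors through \(w\mapsto w|_{L_{\mathfrak a}}\), by the formula \((g\cdot c)(v,u)=c(g^{-1}v,g^{-1}u)\). Consequently \(\Gamma_{\mathfrak a,b}=\Phi_b(W)\) is a subgroup of the finite group \(G\), hence finite, and \(\Phi_b\) induces \(W/\ker\Phi_b\cong\Gamma_{\mathfrak a,b}\).

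Next comes the descent. The projection \(p_2:G\to\Aut(L_{\mathfrak a})\) restricts to a homomorphism \(\Gamma_{\mathfrak a,b}\to\Aut(L_{\mathfrak a})\), which gives an action on \(L_{\mathfrak a}\). It satisfies \(p_2\Phi_b(w)=w|_{L_{\mathfrak a}}\), so the original action factors through \(\Gamma_{\mathfrak a,b}\). I then set \(\bar b_\gamma=p_1(\gamma)\), the first component. This is the only point needing care: \(p_1\) is not a homomorphism, but it is a crossed homomorphism for the action through \(p_2\). Indeed, \(p_1(\gamma\gamma')=p_1(\gamma)\,(p_2(\gamma)\cdot p_1(\gamma'))\) directly from the semidirect product law. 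Hence \(\bar b\in Z^1(\Gamma_{\mathfrak a,b},\Bich(L_{\mathfrak a}))\) and \(\bar b_{\Phi_b(w)}=b_w\), which is the precise meaning of "\(b\) descends". The normalization \(\bar b_1=1\) holds automatically.

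Finally, \(\bar b\) is a bicharacter-normalized Hopf datum, and therefore a strict Hopf datum. Each \(\bar b_\gamma\) is a bicharacter and thus a normalized \(2\)-cocycle. Proposition~\ref{prop:tau-conditions} then yields the Hopf algebra \(H(\Gamma_{\mathfrak a,b},L_{\mathfrak a},\bar b)\), of dimension \(|\Gamma_{\mathfrak a,b}|\,|L_{\mathfrak a}|<\infty\).

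I expect no real obstacle here. The one subtle point is the step above: the descended datum must be taken as the first coordinate of the image itself, rather than defined on cosets of \(\ker\Phi_b\) by hand. This choice makes well-definedness automatic, since \(\ker\Phi_b\) acts trivially on \(L_{\mathfrak a}\) and has \(b\) trivial on it.
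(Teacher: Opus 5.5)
Your proposal is correct and follows the paper's proof essentially step for step. You show that \(\Phi_b\) is a homomorphism via the crossed identity, and you get finiteness of \(\Gamma_{\mathfrak a,b}\) from the finite group \(\Bich(L_{\mathfrak a})\rtimes\Aut(L_{\mathfrak a})\). You descend the action through the second projection and descend \(b\) via \(\bar b_{(c,g)}=c\), which is crossed by the semidirect-product law. Finite-dimensionality then follows from Proposition~\ref{prop:tau-conditions}.
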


\begin{proof}
The crossed homomorphism identity for \(b\) shows that \(\Phi_b\) is a
homomorphism. Let \(N\) be the exponent of \(L_{\mathfrak a}\). Since
\(L_{\mathfrak a}\) is finite, \(\Aut(L_{\mathfrak a})\) is finite, and
every bicharacter \(c\) satisfies
\(c(v,w)^N=c(Nv,w)=1\), so
\(\Bich(L_{\mathfrak a})\subseteq
\mu_N(\kk)^{L_{\mathfrak a}\times L_{\mathfrak a}}\) is finite. Thus
\(\Gamma_{\mathfrak a,b}\) is finite. Let \((c,g)\) act on
\(L_{\mathfrak a}\) through \(g\), and set \(\bar b_{(c,g)}=c\). The
semidirect-product law makes \(\bar b\) a crossed homomorphism, and
\(\bar b_{\Phi_b(w)}=b_w\), so both the action and \(b\) descend. Since
\(\Gamma_{\mathfrak a,b}\) and \(L_{\mathfrak a}\) are finite, the
associated Hopf algebra is finite-dimensional.
\end{proof}

The smallest affine Coxeter group is
\[
\widetilde A_1
=
I_2(\infty)
=
\langle s,t\mid s^2=t^2=1\rangle.
\]

For \(\widetilde A_1\), the Coxeter field is \(\mathbb Q\), so
\(\mathcal O_W=\Z\).
For \(n\ge2\), take \(\mathfrak a=n\Z\), write
\(R=R_{\mathfrak a}=\Z/n\Z\), and set
\(L_n=L_{\mathfrak a}=R\alpha_1\oplus R\alpha_2\cong R^2\). Fix a
generating additive character \(\chi\colon(R,+)\to\kk^\times\).
The generators act by
\[
\begin{aligned}
s(\alpha_1)&=-\alpha_1,
&
s(\alpha_2)&=\alpha_2+2\alpha_1,\\
t(\alpha_1)&=\alpha_1+2\alpha_2,
&
t(\alpha_2)&=-\alpha_2.
\end{aligned}
\]

Writing
\(M_n^{\operatorname{lin}}=
\Hom_R(\wedge_R^2L_n,R)\), the linear alternating module is free of
rank one over \(R\). The form \(\omega_0\) defined by
\(\omega_0(\alpha_1\wedge\alpha_2)=1\) identifies it with \(R\), and both
generators act by \(-1\). Thus
\(M_n^{\operatorname{lin}}\cong R^{\operatorname{sgn}}\) through this
fixed identification.

The involutivity relations impose no conditions on the values at \(s\)
and \(t\), and there are no braid relations. Set
\(\operatorname{diag}(2R)=\{(2u,2u)\mid u\in R\}\). Evaluation at
\(s\) and \(t\), together with
\(d_u(s)=d_u(t)=-2u\) for a principal crossed homomorphism, gives
\[
\begin{aligned}
Z^1(\widetilde A_1,M_n^{\operatorname{lin}})&\cong R^2,\\
B^1(\widetilde A_1,M_n^{\operatorname{lin}})&=\operatorname{diag}(2R).
\end{aligned}
\]

\begin{proposition}\label{prop:affine-a1}
For every \(n\ge2\),
\[
\begin{aligned}
H^1(\widetilde A_1,M_n^{\operatorname{lin}})
&\cong H^1(\widetilde A_1,R^{\operatorname{sgn}})\\
&\cong R\oplus R/2R.
\end{aligned}
\]
In particular, if \(n\) is odd, then
\[
H^1(\widetilde A_1,M_n^{\operatorname{lin}})
\cong R.
\]
\end{proposition}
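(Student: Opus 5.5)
The computation reduces to linear algebra on the pair \((d(s),d(t))\), using the descriptions of \(Z^1\) and \(B^1\) just established. The first isomorphism, \(H^1(\widetilde A_1,M_n^{\operatorname{lin}})\cong H^1(\widetilde A_1,R^{\operatorname{sgn}})\), is induced by the fixed identification \(r\mapsto r\omega_0\). Since both generators act by \(-1\) on \(M_n^{\operatorname{lin}}\), this map is an isomorphism of \(\widetilde A_1\)-modules, and it therefore induces isomorphisms on \(Z^1\), \(B^1\) and \(H^1\).

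For the second isomorphism, I invoke Proposition~\ref{prop:coxeter-crossed}, in its evident form for the module \(R^{\operatorname{sgn}}\). Since \(\widetilde A_1=\langle s,t\mid s^2=t^2=1\rangle\) has no braid relation, and the involutivity equations \(a+s\cdot a=a-a=0\) hold automatically, every pair \((a,b)\in R^2\) is of the form \((d(s),d(t))\) for a unique crossed homomorphism \(d\). This gives \(Z^1\cong R^2\). A principal crossed homomorphism \(d_u(g)=g\cdot u-u\) evaluates to \((-2u,-2u)\), so \(B^1=\operatorname{diag}(2R)\), as recorded before the statement.

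To compute the quotient, I mimic the map \(\Phi\) from the proof of Theorem~\ref{thm:dihedral-cohomology}, dropping the torsion condition. Define
\[
\Psi\colon R^2\longrightarrow R\oplus R/2R,
\qquad
(a,b)\longmapsto(a-b,\;b\bmod 2R).
\]
This map is additive. It is surjective, because \((r+c,c)\mapsto(r,c\bmod 2R)\). Its kernel consists of the pairs with \(a=b\in 2R\), which is exactly \(\operatorname{diag}(2R)\). Hence \(H^1\cong R^2/\operatorname{diag}(2R)\cong R\oplus R/2R\).

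For odd \(n\), the element \(2\) is a unit in \(R=\Z/n\Z\). Then \(2R=R\), so \(R/2R=0\) and \(H^1\cong R\).

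There is no substantive obstacle. The only point needing care is that the free-product presentation really does allow arbitrary values at \(s\) and \(t\). That follows from Proposition~\ref{prop:coxeter-crossed}: one constructs the homomorphism \(W\to R^{\operatorname{sgn}}\rtimes W\) sending \(s\mapsto(a,s)\) and \(t\mapsto(b,t)\), and the only relations to check are \(s^2=t^2=1\).
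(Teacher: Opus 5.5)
Your proposal is correct and follows the paper's proof essentially verbatim: the paper uses the same map \(\Psi(a,b)=(a-b,\,b\bmod 2R)\) on \(Z^1\cong R^2\), checks surjectivity via \((r+c,c)\), and identifies its kernel with \(\operatorname{diag}(2R)=B^1\). Your extra justifications, namely the first isomorphism via \(\omega_0\) and \(Z^1\cong R^2\) via the semidirect-product argument, only make explicit what the paper records in the text preceding the statement.
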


\begin{proof}
Consider the homomorphism
\[
\Psi:R^2\longrightarrow R\oplus R/2R,
\qquad
(a,b)\longmapsto(a-b,\,b\bmod 2R).
\]
Given \((r,\bar c)\in R\oplus R/2R\), choose a representative
\(c\in R\) of \(\bar c\); then \(\Psi(r+c,c)=(r,\bar c)\), so \(\Psi\)
is surjective. If \(\Psi(a,b)=0\), then \(a=b\) and \(b\in2R\), and
conversely every pair \((2u,2u)\) maps to zero. Hence
\(\ker\Psi=\operatorname{diag}(2R)\), which is the group of principal
crossed homomorphisms. Thus \(\Psi\) induces the claimed isomorphism.
If \(n\) is odd, then \(2R=R\), so \(R/2R=0\).
\end{proof}

For \(d\in Z^1(\widetilde A_1,M_n^{\operatorname{lin}})\), set
\(d_\chi=\iota_\chi\circ d\). If \(n\) is odd, the fixed generating
character \(\chi\) and Corollary~\ref{cor:odd-exponent} give a canonical
bicharacter lift of \(d_\chi\).
Proposition~\ref{prop:affine-factorization} then produces a
finite-dimensional Hopf algebra after passing to the finite image
determined by the reduced action and the lifted datum. If \(n\) is even,
the datum \(\vartheta_*d_\chi\) admits a strict lift precisely when
\(\delta(\vartheta_*d_\chi)\) is trivial, by
Theorem~\ref{thm:general-obstruction}; the datum \(d_\chi\) admits a
bicharacter lift precisely when
\(\operatorname{ob}_{\operatorname{bich}}(d_\chi)\) is trivial, by
Theorem~\ref{thm:bich-obstruction}.

Theorem~\ref{thm:dihedral-cohomology} shows that finite dihedral
linearized cohomology is supported at primes lying over rational primes
dividing \(2m\), whereas Proposition~\ref{prop:affine-a1} exhibits an
\(R\)-summand for \(\widetilde A_1\) when \(n\) is odd. This is the
finite--affine contrast detected by the linearized Coxeter calculations.

\bibliographystyle{amsalpha}
\bibliography{references}

\end{document}